\DeclareMathAlphabet{\mathbbold}{U}{bbold}{m}{n}	
\newcommand{\R}{\mathbb{R}}
\newcommand{\N}{\mathbb{N}}
\renewcommand{\mid}{\;:\;}
\renewcommand{\le}{\leqslant}
\renewcommand{\ge}{\geqslant}
\renewcommand{\leq}{\leqslant}
\renewcommand{\geq}{\geqslant}
\renewcommand{\emptyset}{\varnothing}
\renewcommand{\subsetneq}{\varsubsetneq}
\newcommand{\eps}{\varepsilon}
\newcommand{\lang}{\left\langle}
\newcommand{\rang}{\right\rangle}
\newcommand{\T}[1]{{1}^\mathrm{T}}
\newcommand{\ie}{\emph{i.e.~}}
\newcommand{\eg}{\emph{e.g.~}}
\newcommand{\parfrac}[2]{\frac{\partial{#1}}{\partial{#2}}}
\newcommand{\Id}{\mathbbm{I}}
\def\startmodif{\begingroup} 
\def\stopmodif{\endgroup}
\DeclareMathOperator{\codim}{codim}
\DeclareMathOperator{\Image}{Im}
\DeclareMathOperator{\rank}{rank}
\newtheorem{theorem}{Theorem}[section]
\newtheorem{corollary}[theorem]{Corollary}
\newtheorem{lemma}[theorem]{Lemma}
\newtheorem{proposition}[theorem]{Proposition}
\theoremstyle{definition}
\newtheorem{remark}[theorem]{Remark}
\newtheorem*{issue}{Main issue}
\newcommand{\e}{\mathrm e}
\renewcommand{\S}{\mathbb S}
\newcommand*\diff{\mathop{}\!\mathrm{d}}
\newcommand{\xhat}{\hat{x}}
\renewcommand{\epsilon}{\varepsilon}
\renewcommand{\phi}{\varphi}
\newcommand{\End}{\mathrm{End}}
\newcommand{\GL}{\mathrm{GL}}
\newcommand{\OO}{\mathcal{O}}
\newcommand{\LL}{\mathcal{L}}
\newcommand{\Au}[1]{A_{#1}} 
\newcommand{\A}{{C^\infty(\R^n, \R)}}
\newcommand{\OOO}{\mathscr{O}}
\newcommand{\Gt}{\tilde\OOO} 
\newcommand{\deltat}{{\tilde\delta}}
\newcommand{\D}{D}
\newcommand{\K}{\mathcal{K}}
\newcommand{\Pric}{\xi}
\newcommand{\Kx}{\K_1}
\newcommand{\Keps}{\K_2}
\newcommand{\KP}{\K_3}
\newcommand{\GG}{G}
\newcommand{\FF}{F}
\newcommand{\HH}{H}
\newcommand{\NNN}{\mathscr{N}}
\newcommand{\fonction}[5]{
\begin{equation*}
\displaystyle
\begin{array}{lrcl}
{#1}: & #2 & \longrightarrow & #3 \\
    & #4 & \longmapsto & #5
\end{array}
\end{equation*}}
\newcommand{\feed}{\lambda}
\newcommand{\cont}{u}
\newcommand{\intset}[2]{\{#1, \dots, #2\}}
\newcommand{\norm}[1]{\left\Vert #1\right\Vert}
\newcommand{\abs}[1]{\left\vert #1\right\vert}
\newcommand{\I}{\mathcal{I}}
\newcommand{\km}{{k_{\I}}}
\newcommand{\di}{{|\I|}}
\newcommand{\kdep}{{j_0}}
\newcommand{\V}{V}
\newcommand{\C}{\mathcal{C}}
\newcommand{\Cx}{\C_1}
\newcommand{\Ceps}{\C_2}
\newcommand{\CP}{\C_3}
\newcommand{\Prics}{\mathcal{S}}
\newcommand{\sym}{\Prics}
\newcommand{\NFOT}{$\mathrm{(NFOT)}$}
\newcommand{\FC}{$\mathrm{(FC)}$}
\newcommand{\VV}{\mathcal{V}}
\newcommand{\Pan}{{\Pric}_{\omega}}
\newcommand{\xhatan}{{\xhat}_{\omega}}
\newcommand{\epsan}{{\eps}_{\omega}}
\newcommand{\LLL}{L}
\newcommand{\dif}{\mathrm{d}}
\newcommand{\supl}{u_M}
\newcommand{\RR}{r}
\newcommand{\RRR}{R}
\newcommand{\dmax}{\eta_1}
\newcommand{\UUU}{U}
\newcommand{\lambdasat}{\lambda_{\textrm{sat}}}
\newcommand{\setword}[2]{%
  #1\def\@currentlabel{\unexpanded{#1}}\label{#2}%
}
\newcommand{\FFF}{\Lambda}
\title{Avoiding observability singularities in output feedback\linebreak bilinear systems}
\date{\today}
\author[1]{Lucas Brivadis}
\author[2]{Jean-Paul Gauthier}
\author[1]{Ludovic Sacchelli}
\author[1]{Ulysse Serres}
\affil[1]{Univ. Lyon, Universit\'e Claude Bernard Lyon 1, CNRS, LAGEPP UMR 5007, 43 bd du 11 novembre 1918, F-69100 Villeurbanne, France}
\affil[2]{Universit\'e de Toulon, Aix Marseille Univ, CNRS, LIS, France
}
\begin{document}

\maketitle
\begin{abstract}
Control-affine output systems generically present observability singularities, \ie inputs that make the system unobservable.
This proves to be a difficulty in the context of output feedback stabilization, 
where this issue is usually discarded by uniform observability assumptions for state feedback stabilizable systems.
Focusing on state feedback stabilizable bilinear control systems with linear output,
we use a transversality approach to provide perturbations of the stabilizing state feedback law, in order to make our system observable in any time even in the presence of singular inputs.
\end{abstract}


\noindent {\bf Keywords:}
\begin{minipage}[t]{.8\linewidth}
\flushleft
Observability, Transversality theory, Output feedback, Stabilization
\end{minipage}


\section{Introduction}

Stabilizing the state of a dynamical system to a target point is a classical problem in control theory.
However, in many physical problems, only part of the state is known. Hence a state feedback can not be directly implemented.
When a stabilizing state feedback exists, a commonly used idea is to apply this feedback to an estimation of the state, relying on a dynamical system called the \emph{observer}, which learns the state of the system from its dynamics and the measured output.
This strategy belongs to the family of \emph{dynamic output feedback stabilization} techniques.

In the deterministic setting,
output feedback stabilization has been extensively studied (see \emph{e.g.} \cite{ AndrieuPraly2009,AtassiKhalil1999, Coron1994,EsfandiariKhalil1992,GauthierKupka1992, EsfandiariKhalil1993,   MarconiPralyIsidori2007,  TeelPraly1994, TeelPraly1995}).
The \emph{observability} of a controlled system for some fixed input qualifies the ability to estimate the state using its output, and characterizes the fact that two trajectories of the system can be  distinguished by their respective outputs over a given time interval. This crucial notion constitutes a field of study in itself (see \emph{e.g.} \cite{AndrieuPraly2009, Bernard-etal.2017, Gauthier_book, TW2009}).
A commonly used hypothesis to achieve output feedback stabilization is the \emph{uniform} observability of the system, that is the system is observable for all possible inputs.
It is well-known that a globally state feedback stabilizable system that is uniformly observable is also semi-globally output feedback stabilizable (see \emph{e.g.} \cite{EsfandiariKhalil1992, EsfandiariKhalil1993, TeelPraly1994, TeelPraly1995}).

However, as shown in \cite{Gauthier_book}, it is not generic for a dynamical system to be uniformly observable.
There may exist singular inputs for the system, that are inputs that make the system unobservable on any time interval, and the output feedback may produce such singular inputs.
This defeats the purpose of output feedback stabilization, which is still an open problem when such inputs exist.
Investigating this issue, some authors propose a different approach by allowing time-varying (either periodic as in \cite{Coron1994} or ``sample and hold'' as in \cite{ShimTeel2002}) output feedback. Doing so, the authors use a separation principle to show output feedback stabilization.
Adopting another point of view and in line with \cite{MarcAurele}, we are interested in smooth time-invariant output feedback.

In this work, we restrict ourselves to the class of
\startmodif
single-input single-output
\stopmodif
bilinear systems with linear observation that are state feedback stabilizable at some target point, which, with no loss of generality, is chosen to be $0$.
We also assume the system to be observable at the target, that is, the constant input obtained by evaluation of the feedback at 0 is not singular.
This class of systems is a natural choice of study for two reasons.
First, the uniform observability hypothesis is still not generic in this case. In particular, one can easily check that there generically exists constant inputs that make the system unobservable in any time.
Secondly, according to \cite{fliess}, any control-affine system with finite dimensional observation space may be immersed in such a system.

In this context, a natural question to ask is:
``Can we ensure that only observable inputs are produced by the dynamics when the output feedback is obtained as a combination of an observer and a stabilizing state feedback?''
This question falls within the more general and unsolved problem of building a smooth separation principle for systems with observability singularities.
\startmodif
One cannot hope for generic bilinear systems that all stabilizing state feedback laws ensure the observability of the closed-loop system. However, we show that for any stabilizing state feedback law, there exist small additive perturbations to this feedback that satisfy this observability property and conserve its locally stabilizing property.
Transversality theory is used
to prove the existence of such an open and dense class of perturbations.
In particular, for almost all considered systems,  almost any locally stabilizing feedback law ensures observability of the closed-loop system.
Stabilization by output feedback is beyond the scope of this paper, which focuses only on the observability issue. Yet, the obtained results may pave the way to the construction of a ``generic'' separation principle.
\stopmodif
For our results to hold,
some properties of the dynamical observer are needed.
The problem is tackled with a general observer design, and it is shown in a closing section that the classical Luenberger and Kalman observers fit our hypotheses.

\subsubsection*{Organization of the paper}

In Section~\ref{sec:state}, we state the main results of this paper. We begin this section with some definitions and notations, and we emphasise the precise issue. In particular, we define the system and the class of feedback perturbations we are interested in.
We then state our main results on observability properties of the perturbed system, and assert that the classical Kalman and Luenberger observers fit our hypotheses.

In Section~\ref{sec:proof} the reader may find a proof of our main results 
in three subsections. We rely on a transversality approach, which requires some technical preliminary results (Section~\ref{sec:prel}).
Sections \ref{Subsec:part2} and \ref{sec:target} are then  focused on the proof of our first main theorem and its corollary, respectively.

Lastly, we prove in Section~\ref{sec:appli}   that the Luenberger and Kalman observers fit our hypotheses, so that we can apply our previous theorems to these observers. In order to do so, we prove that their dynamics are somehow compatible with the Kalman observability decomposition.

\subsubsection*{Notations}

Let $\N$ be the set of non-negative integers.
For any subset $\I\subset\N$, $|\I|$ denotes its cardinality.

Let $n, m$ be positive integers.
Let $\lang\cdot,\cdot\rang$ be the canonical scalar product on $\R^n$, $|\cdot|$ the induced Euclidean norm,
$B(x,r)$ the open ball centered at $x$ of radius $r$ for this norm,
and $\S^{n-1} \subset \R^n$ the unit sphere.
Let $\LL(\R^n, \R^m)$ be the set of \startmodif linear maps \stopmodif from $\R^n$ to $\R^m$ and $\End(\R^n) = \LL(\R^n, \R^n)$.
For any endomorphism $A\in\End(\R^n)$, denote by $A^*$ its adjoint operator.

If $f$ is a function from $\R^{n}$ to $\R^{m}$,
the notation $D f(x)[v]$ stands for the differential at $x\in\R^{n}$ applied to the vector $v\in\R^{n}$ of the function $f$. 
The partial differential of $f$ at $x$ with respect to the variable $y$ is denoted by
$D_{y}f(x)$.
In particular,
for any function $t \mapsto v(t)$ defined on a real interval containing zero,
we use the shorthand notation $v^{(i)} = \frac{\diff^i v}{\diff t^i}(0)$ for all $i \in \N$.

Let $k\in\N$.
The set of all $k$-jets from $\R^{n}$ to $\R^{m}$ is denoted 
by $J^k(\R^{n}, \R^{m})$  (see, for instance, \cite[Chapter II]{GG}).
The mapping
\(
\sigma : J^k_x(\R^{n}, \R^{m}) \to \R^{n}
\)
given by
\(
\sigma : j^k f \mapsto \sigma \big(j^k f\big)=x
\)
is called the source map and the mapping
\(
\tau : J^k_x(\R^{n}, \R^{m}) \to \R^{m}
\)
given by
\(
\tau : j^k f \mapsto \tau \big(j^k f\big)=f(x)
\)
is called the target map.
Put $J^k_x(\R^{n}, \R^{m}) = \sigma^{-1}(x)$, $J^k(\R^{n}, \R^{m})_y = \tau^{-1}(y)$ and $J^k_x(\R^{n}, \R^{m})_y = \sigma^{-1}(x) \bigcap \tau^{-1}(y)$.
We have
\(
J^k(\R^{n}, \R^{m})
= \coprod_{x\in\R^{n}} J^k_x(\R^{n}, \R^{m}) = \R^{n} \times J^k_x(\R^{n}, \R^{m}).
\)

\section{Statement of the results}\label{sec:state}

\subsection{Problem statement}

Let $n$ be a positive integer, 
$A,B\in \End(\R^n)$, $C\in \LL(\R^n,\R)$, $b\in \R^n$ and $\cont\in C^\infty(\R_+, \R)$.
\startmodif
Set $\Au{u} = A +u B$.
In the present article, we focus on the following observed bilinear control system:
\stopmodif
\begin{equation}\label{E:observation_system}
\left\{
\begin{aligned}
&\dot{x}=\Au{u} x+ b\cont
\\
&y= C x.
\end{aligned}
\right.
\end{equation}

System~\eqref{E:observation_system} is said to be \emph{observable} in time $T>0$ and for the control function $\cont$ if and only if, for all pair of solutions $\big((x_1, y_1), (x_2, y_2)\big)$ of 
\eqref{E:observation_system},
$(y_1-y_2)|_{[0, T]} \equiv 0$ implies
$(x_1-x_2)|_{[0, T]} \equiv 0$.
\startmodif
For bilinear control systems of the form \eqref{E:observation_system}, we have the following characterization.
\begin{proposition}\label{prop:obs}
System~\eqref{E:observation_system} is observable in time $T$
for the control $u$ if and only if
for every $\omega_0 \in \S^{n-1}$ the solution of
$\dot \omega = \Au{u(t)}\omega$ initiated from $\omega_0$ satisfies $C\omega|_{[0, T]} \not\equiv 0$.

\end{proposition}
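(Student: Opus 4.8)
The plan is to exploit the affine structure of \eqref{E:observation_system} in the state variable. First I would note that if $(x_1,y_1)$ and $(x_2,y_2)$ are two solutions of \eqref{E:observation_system} for the same control $u$, then their difference $z := x_1 - x_2$ solves the \emph{linear homogeneous} equation $\dot z = \Au{u(t)} z$ and satisfies $y_1 - y_2 = Cz$, the forcing term $b\cont$ having cancelled. Conversely, any solution $z$ of $\dot z = \Au{u(t)}z$ arises this way: fixing one solution $x_2$ of \eqref{E:observation_system} and setting $x_1 := x_2 + z$ produces another solution for the same $u$. Hence observability in time $T$ for the control $u$ is equivalent to the implication: for every solution $z$ of $\dot z = \Au{u(t)}z$, $Cz|_{[0,T]} \equiv 0$ forces $z|_{[0,T]} \equiv 0$.

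Next I would remove the redundancy coming from linearity. Since $\cont \in C^\infty(\R_+,\R)$, the time-varying linear system $\dot z = \Au{u(t)}z$ has a well-defined invertible transition matrix, so a solution $z$ either vanishes identically on $[0,T]$ or never vanishes on it; in particular, ``$z|_{[0,T]} \equiv 0$'' is equivalent to ``$z(0) = 0$''. Moreover, if $z(0) \neq 0$, then $\omega_0 := z(0)/|z(0)| \in \S^{n-1}$, and by homogeneity the corresponding solution $\omega$ of $\dot\omega = \Au{u(t)}\omega$ from $\omega_0$ satisfies $C\omega|_{[0,T]} \equiv 0$ if and only if $Cz|_{[0,T]} \equiv 0$. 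Consequently, the failure of the implication above is equivalent to the existence of some $\omega_0 \in \S^{n-1}$ whose solution $\omega$ satisfies $C\omega|_{[0,T]} \equiv 0$.

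Combining the two steps and passing to the contrapositive yields exactly the claimed characterization. There is no genuinely hard step here: the only point requiring a little care is the standard ODE fact that a solution of a linear time-varying system issued from a nonzero point never reaches $0$, which is what legitimates both the normalization of the initial condition to $\S^{n-1}$ and the replacement of ``$z|_{[0,T]} \equiv 0$'' by ``$z(0) = 0$''.
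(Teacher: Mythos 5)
Your argument is correct and is precisely the standard reasoning the paper relies on (the paper in fact states Proposition~\ref{prop:obs} without proof, treating it as a known characterization for bilinear systems): the difference of two solutions kills the affine term $b\cont$ and solves the homogeneous equation $\dot z = \Au{u(t)}z$, and invertibility of the transition matrix plus homogeneity lets you normalize the initial condition to $\S^{n-1}$. Your care about the equivalence ``$z|_{[0,T]}\equiv 0 \iff z(0)=0$'' is exactly the one nontrivial point, and you handle it correctly.
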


\stopmodif

If \eqref{E:observation_system} is observable for $u=0$ in some time $T>0$, then it is also observable in any time $T>0$, and we say that the pair $(C, A)$ is \emph{observable}.
According to the Kalman rank condition, $(C, A)$ is observable if and only if the rank of the following observability matrix
\begin{align}\label{E:Kalman-matrix}
\OO(C, A) = 
\begin{pmatrix}
C\\
CA\\
\vdots\\
CA^{n-1}
\end{pmatrix}
\end{align}
is equal to $n$.

Let $\Prics$ be a finite dimensional manifold and let $\LLL:\Prics\to \LL(\R, \R^n)$. For all $u\in \R$, let $f(\cdot,u)$ be a vector field over $\Prics$.
Denoting $\varepsilon= \xhat-x$, we introduce a dynamical observer system depending on the pair $(f, \LLL)$:
\begin{equation}\label{E:Observer_system}
\left\{
\begin{aligned}
\dot{\xhat}
&= \Au{u} \xhat +  b u - \LLL(\Pric)C\varepsilon
\\
\dot{\eps}
&=\left( \Au{u}  -\LLL(\Pric)C \right) \eps
\\
\dot{\Pric}
&= f(\Pric,u).
\end{aligned}
\right.
\end{equation}
Let $\feed\in C^\infty(\R^n, \R)$ be such that $0$ is an asymptotically stable equilibrium point of
the vector field $x \mapsto \Au{\lambda(x)}x + b\lambda(x)$ 
for some open domain of attraction $\D(\lambda)$.
We will further assume that $\lambda(0)=0$, which is true up to a substitution of $A$ with $A+\lambda(0) B$.

As stated in the introduction, our goal is to make system~\eqref{E:observation_system} observable 
in time $T$ for the control $u = \feed \circ \xhat$,
where $\xhat$ follows \eqref{E:Observer_system} with initial conditions $(\xhat_0, \eps_0, \Pric_0)$.
Since the stabilizing feedback $\lambda$ does not guarantee this property, we consider a small perturbation $\lambda + \delta$ of it.
For all $\delta\in\A$, we consider the coupled system
\begin{equation}\label{E:kalman_coupled}
\left\{
\begin{aligned}
\dot{\xhat}
&= \Au{(\feed+\delta)(\xhat)} \xhat + b (\feed+\delta)(\xhat) - \LLL(\Pric)C \eps
\\
\dot{\eps}
&= \left(\Au{(\feed+\delta)(\xhat)} - \LLL(\Pric) C \right) \eps
\\
\dot{\Pric}
&=f(\Pric,(\lambda+\delta)(\xhat))\\
\dot{\omega}
&= \Au{(\feed+\delta)(\xhat)} \omega.
\end{aligned}
\right.
\end{equation}

\startmodif
\begin{remark}
In system \eqref{E:kalman_coupled}, the dynamics of $(\xhat, \eps, \Pric)$ do not depend on $\omega$.
However, the dynamics of $\omega$ are included in \eqref{E:kalman_coupled} as they are crucial for the observability analysis of \eqref{E:observation_system} with input $u=\lambda(\xhat)$,
as stated in Proposition~\ref{prop:obs}.
We will sometimes consider $(\xhat, \eps, \Pric)$ to be the first coordinates of a solution of \eqref{E:kalman_coupled} without fixing any initial condition for $\omega$.
\end{remark}
\stopmodif

\startmodif
From now on, we denote by $\K=\Kx\times \Keps \times \KP$  a semi-algebraic compact subset of $\D(\lambda)\times \R^n\times\sym$, which stands for a subset of the space of initial conditions of system~\eqref{E:Observer_system}.
For all $R>0$, let
\[
\VV_R = \left\{ \delta \in \A \mid \forall x\in B(0, R),\quad \delta(x) = 0 \right\}.
\]
\stopmodif
We ask the observer given by $(f, \LLL)$ to satisfy the following important properties:

\vspace{.5cm}
\noindent
\begin{minipage}[t]{.1\linewidth}
\setword{\FC}{FC}
\end{minipage}
\begin{minipage}[t]{.85\linewidth}
(Forward completeness.)
For all $u\in C^{\infty}(\R_+,\R)$, the time-varying vector field $f(\cdot ,u)$ is forward complete.
Moreover, for all $(\xhat_0,\varepsilon_0,\Pric_0,\omega_0)\in \K \times \S^{n-1}$
and for all $\delta\in\A$ bounded over $\D(\lambda)$, the coupled system~\eqref{E:kalman_coupled} has a unique solution $(\xhat, \eps, \Pric, \omega)\in C^\infty(\R_+, \R^n\times\R^n\times\sym\times\S^{n-1})$ defined on $[0, +\infty)$.
\end{minipage}

\vspace{.5cm}
\noindent
\begin{minipage}[t]{.1\linewidth}
\setword{\NFOT}{NFOT}
\end{minipage}
\begin{minipage}[t]{.85\linewidth}
(No flat observer trajectories.)
For all $R>0$, there exists $\eta>0$ such that for all $\delta\in\VV_R$ satisfying $\sup\{\abs{\delta(x)}\mid x\in \Kx\} < \eta$,
for all $(\xhat_0,\varepsilon_0,\Pric_0,\omega_0)\in \K\times\S^{n-1}$ such that $(\xhat_0,\varepsilon_0)\neq (0,0)$,
there exists a positive integer $k$ such that the solution of~\eqref{E:kalman_coupled} with initial condition $(\xhat_0,\varepsilon_0,\Pric_0,\omega_0)$ satisfies $\xhat^{(k)}(0)\neq 0$.
\end{minipage}

\bigskip\noindent
These properties are investigated in the last section of the paper. There, we show that the classical Luenberger and Kalman observers fit these hypotheses so that the main results may be applied to these observers.

For all $k\in\N$, $K\subset\R^n$ and $\delta \in \A$, let
\begin{align*}
\norm{\delta}_{k, K}
&= \sup \left\{\abs{ \frac{\partial^\ell \delta}{\partial x_{i_1} \cdots \partial x_{i_\ell}}(x)} \mid 0\leq\ell\leq k,\quad 1\leq i_1\leq \cdots\leq i_\ell \leq n,\quad x\in K\right\}.
\end{align*}
For any $k \in \N$, any compact subset $K \subset \R^n$ and any $\eta>0$, $k \in \N$,
let
\[
\NNN(k, K, \eta)
= \left\{ \delta \in \A \mid \norm{\delta}_{k, K} < \eta \right\}.
\]


\startmodif
\begin{remark}
One can check that for any open subset $\UUU \subset \D(\lambda)$ relatively compact in $\D(\lambda)$,
for all $\RRR>0$,
there exists $\eta>0$ such that for all $\delta\in\VV_\RRR$ satisfying $|\delta|<\eta$, the feedback $\lambda+\delta$ is such that 0 is asymptotically stable with domain of attraction containing $\UUU$.
Hence in the following we focus only on the observability properties of the stabilizing feedback $\lambda+\delta$.
\end{remark}
\stopmodif


\begin{issue}
Let $T>0$.
Under genericity assumptions on $(A, B, C)$, does there exist $\RRR, \eta>0$, a positive integer $k$ and a residual set $\OOO\subset \NNN(k, \Kx, \eta)$ such that  we have the following property.  For all $\delta\in\OOO\cap\VV_\RRR$ and for all initial conditions $(\xhat_0, \eps_0, \Pric_0)\in\K$, system~\eqref{E:observation_system} is observable in time $T$ for the control $u = (\feed + \delta) \circ \xhat$, where $\xhat$ follows \eqref{E:kalman_coupled} with initial conditions $(\xhat_0, \eps_0, \Pric_0)$ and feedback perturbation $\delta$?
\end{issue}

\subsection{Main results}\label{sec:main_results}
In this section, we state the main results of the paper whose proofs are postponed to the upcoming sections.
We first state our main theorem, that deals with the observability of  system~\eqref{E:kalman_coupled}. Its proof is the most technical part of the paper, and heavily relies on transversality theory. 
\begin{theorem}\label{Thm:main}
Assume that the pairs $(C, A)$ and $(C, B)$ are observable.
Assume that $0\notin \Kx$.
Then there exist $\eta>0$, a positive integer $k$ and a dense open (in the Whitney $C^\infty$ topology) subset $\OOO \subset \NNN(k, \Kx, \eta)$ such that
the solution to \eqref{E:kalman_coupled} with $\delta\in\OOO$ and initial condition $(\hat x(0), \eps(0), \Pric(0), \omega(0))\in\K\times\S^{n-1}$ satisfies
\begin{equation}\label{main_eq}
\exists k_0 \in \{ 0,\dots,k\}\quad\mid\quad \left.\frac{\diff^{k_0}}{\diff t^{k_0}}\right|_{t=0} C\omega(t) \neq 0.   
\end{equation}

\end{theorem}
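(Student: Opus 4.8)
The plan is to view the map $\delta \mapsto$ (solution jet of \eqref{E:kalman_coupled}) as producing, for each initial condition in the compact set $\K \times \S^{n-1}$, a finite jet of the function $t \mapsto C\omega(t)$ at $t=0$, and to show that for a residual (hence, after shrinking, open dense) set of perturbations $\delta$ this jet is never the zero jet. The key structural observation is that the successive derivatives $\frac{\diff^j}{\diff t^j}\big|_{t=0} C\omega(t)$ are polynomial expressions in the entries of $A$, $B$, $C$, in $\omega_0$, and in the derivatives $\hat x^{(0)}(0), \hat x^{(1)}(0), \dots$ of the observer trajectory (because $u(t) = (\lambda+\delta)(\hat x(t))$ enters the $\omega$-dynamics only through these time-derivatives at $0$, via Faà di Bruno). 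For instance $\frac{\diff}{\diff t}\big|_0 C\omega = CA\omega_0 + u(0) CB\omega_0$, the next one brings in $CA^2\omega_0$, $CAB\omega_0$, $CBA\omega_0$, $CB^2\omega_0$ and $u'(0)CB\omega_0$, and so on. So the obstruction "the jet vanishes" is a finite system of polynomial equations, and the game is to count dimensions.

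The first step is the preliminary reduction already announced in Section~\ref{sec:prel}: by property \NFOT{}, for initial conditions with $(\hat x_0, \eps_0) \neq (0,0)$ — which is forced here since $0 \notin \Kx$ — there is a uniform $k$ such that some $\hat x^{(j)}(0) \neq 0$ with $j \le k$; combined with forward completeness \FC{} this lets us work with jets of a fixed finite order $k$ and treat the data $\big(\omega_0, \hat x^{(0)}(0), \dots, \hat x^{(k)}(0)\big)$ as the relevant finite-dimensional parameter, ranging over a compact set. The second step is to set up a jet-transversality argument: one defines, on the jet bundle $J^k(\R^n, \R)$ (jets of $\delta$ at points of $\Kx$), the "bad" set $W$ consisting of those jets of $\delta$ for which, at the corresponding point, the induced jet of $C\omega(\cdot)$ at $t=0$ vanishes for every $\omega_0 \in \S^{n-1}$. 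Using observability of $(C,A)$ and of $(C,B)$ — which guarantees that the vectors $C\omega_0, CA\omega_0, CB\omega_0, \dots$ span enough directions that no nonzero $\omega_0$ can kill an arbitrarily long string of them — one shows $W$ has positive codimension (in fact codimension $> n$, so that after intersecting with the $(n-1)$-parameter family $\S^{n-1}$ and the finitely many $\hat x^{(j)}$ parameters it is still avoidable); here one must be careful that the jet of $C\omega$ depends on the jet of $\delta$ at $\hat x_0$ in a way that is genuinely a submersion onto the relevant coordinates (the derivatives $u^{(i)}(0)$ are affine in $\delta^{(i)}$ with the leading coefficient $\hat x^{(1)}(0)^{\otimes i}$, nonzero whenever $\hat x^{(1)}(0)\neq 0$; the cases where the first few $\hat x^{(j)}(0)$ vanish are handled by \NFOT{} pushing the relevant index up). Then Thom's jet transversality theorem gives a residual set of $\delta$ whose $k$-jet extension is transverse to $W$, and by the codimension count transversality here means disjointness, i.e. \eqref{main_eq} holds for that $\delta$ at every initial condition; a compactness argument upgrades residual to open dense inside some $\NNN(k, \Kx, \eta)$, with $\eta$ small enough (and $\delta \in \VV_\RRR$) to preserve stabilization via the Remark.

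The main obstacle I expect is the codimension estimate for the bad set $W$ — i.e., showing that the polynomial conditions "$\frac{\diff^j}{\diff t^j}\big|_0 C\omega = 0$ for $j = 0,\dots,N$ and all $\omega_0 \in \S^{n-1}$, for the given $\hat x$-jet" cut out a set of sufficiently high codimension in the jet variable, uniformly over the compact parameter set. This is where both observability hypotheses $(C,A)$ and $(C,B)$ must be used in tandem, and where one has to handle the degenerate strata (observer jets whose low-order derivatives vanish) separately, leaning on \NFOT{}; keeping the codimension bound uniform as the $\hat x$-jet parameter varies over its compact range, so that a single finite order $k$ and a single $\eta$ work, is the delicate bookkeeping. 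Everything else — the Faà di Bruno expansion of $u^{(i)}(0)$, the passage from residual to open-dense by compactness, and the preservation of the stabilizing property — is routine given the results quoted from the earlier sections.
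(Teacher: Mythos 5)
Your overall architecture coincides with the paper's: reduce to the non-vanishing of a finite jet of $t\mapsto C\omega(t)$ at $t=0$, view that jet as a function of the $k$-jet of $\delta$ at $\hat x_0$ (through Fa\`a di Bruno and the jet of $u=(\lambda+\delta)\circ\hat x$), define a ``bad'' set of jets, show it has codimension larger than $n$ so that transversality means avoidance, and use \NFOT{} together with compactness of $\K\times\S^{n-1}$ to obtain a single finite order $k$ and an open dense subset of $\NNN(k,\Kx,\eta)$. Up to this point your plan matches Section~\ref{Subsec:part2}.

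However, the step you yourself flag as ``the main obstacle'' --- the submersivity/codimension estimate for the bad set --- is precisely the content of the paper's proof, and your plan does not contain the idea that makes it work. The difficulty is that for a fixed $\omega_0$ and a fixed input jet $v$, the map $v\mapsto\big(CP_k(v)\omega_0\big)_k$ need not be a submersion: all the partial derivatives $\partial(CP_k\omega_0)/\partial X_j$ can vanish at $v$, and the assertion that $(C,A)$- and $(C,B)$-observability make the vectors ``span enough directions'' does not rule this out. The paper's resolution (Lemma~\ref{lemma:polynomial}, Corollaries~\ref{C:CBPk} and~\ref{C:rank_N}, Proposition~\ref{P:main}) is a finite descending induction on $m$ over the stratification $E_m=\left\{\omega_0\in\S^{n-1}\mid CB^i\omega_0=0,\ i<m\right\}$ of the sphere, using the auxiliary quantities $CB^m\omega(t)$: the identity $Q_i^{i+k}=\partial P_{i+k}/\partial X_k=\sum_{j=0}^{i-1}k^jR_i^j$ with leading coefficient $R_i^{i-1}=BP_{i-1}/(i-1)!$, combined with a Vandermonde argument, shows that non-vanishing of the jet of $CB^{m}\omega$ forces the jet map associated with $CB^{m-1}\omega$ to have maximal rank. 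Observability of $(C,B)$ enters only to start the induction at $m=n-1$ (where $CB^{n-1}\omega_0\neq0$ on $E_{n-1}$), and the transversality theorem has to be applied $n$ times, once per stratum, each application producing exactly the non-vanishing hypothesis needed for the next rank computation. Without this recursive mechanism your codimension bound is unsupported, so the proposal has a genuine gap at its core. A secondary point: Thom's jet transversality theorem yields only a residual set, whereas the paper invokes the Goresky--MacPherson stratified transversality theorem on semi-algebraic compacta precisely to obtain openness; your remark that ``compactness upgrades residual to open dense'' glosses over why the complement of the good set is closed.
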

The proof of this theorem can be found in Section~\ref{Subsec:part2}.
\begin{remark}\label{rk:obs1}
Property \eqref{main_eq} is stronger than observability of \eqref{E:kalman_coupled} in any time $T>0$. This implication is shown in Corollary~\ref{obs}.
Pay attention to the assumption $0\notin \Kx$. In Section~\ref{sec:target}, 
this assumption is removed, while only slightly weakening our observability result. 
\end{remark}

Theorem~\ref{Thm:main} leads to the following corollary which states that under genericity assumptions on the system, there exists a generic class of perturbations $\delta$ such that the feedback $\lambda+\delta$ makes \eqref{E:kalman_coupled} observable.


\begin{corollary}\label{Cor:main}
Assume that the pairs $(C, A)$ and $(C, B)$ are observable.
Assume that $0$ is in the interior of $\Kx$.
Let $T>0$.
Then there exist $\RRR, \eta>0$, a positive integer $k$ and a dense open subset \startmodif
$\OOO \subset \NNN(k, \Kx, \eta) \cap \VV_\RRR$
such that
the solution to \eqref{E:kalman_coupled} with $\delta\in\OOO$
\stopmodif
and initial condition $(\xhat_0, \eps_0, \Pric_0, \omega_0)\in\K\times\S^{n-1}$ satisfies
\[
\exists t\in[0, T] \quad\mid\quad C \omega(t) \neq 0,
\]
that is system~\eqref{E:observation_system} is observable in time $T$ for the control $u = (\feed + \delta) \circ \xhat$,
where $\xhat$ follows \eqref{E:kalman_coupled} with initial conditions $(\xhat_0, \eps_0, \Pric_0)$ and feedback perturbation $\delta$.
\end{corollary}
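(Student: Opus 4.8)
The plan is to deduce the corollary from Theorem~\ref{Thm:main} by a localization argument that handles the newly admitted point $0 \in \operatorname{int}(\Kx)$. The obstruction is that Theorem~\ref{Thm:main} requires $0 \notin \Kx$, precisely because at $\xhat = 0$ (with $\eps = 0$) the observer trajectory may be identically $0$ and condition \eqref{main_eq} fails in a way no perturbation can fix — this is the content of the hypothesis $(\xhat_0,\varepsilon_0)\neq(0,0)$ appearing in \NFOT. So near $0$ we cannot expect the strong jet-nonvanishing conclusion; instead we must exploit that the perturbation $\delta$ vanishes on a ball $B(0,\RRR)$ (the constraint $\delta \in \VV_\RRR$), so that near $0$ the closed-loop dynamics coincide with the \emph{unperturbed} feedback $\lambda$, for which observability at the target is guaranteed by the standing assumption that $(C,A)$ is observable (recall $\lambda(0)=0$, so the constant input $0$ is not singular). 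First I would fix $\RRR>0$ small enough that $\overline{B(0,\RRR)} \subset \operatorname{int}(\Kx)$.

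The argument then splits the compact set of admissible initial conditions $\K \times \S^{n-1}$ into a "near-target" region and a "far-from-target" region, using a continuity/compactness argument to get a uniform time. Concretely: there is a neighborhood $\UUU$ of $0$, relatively compact in $\D(\lambda)$, and a time $T_0 \in (0,T]$ such that for \emph{every} $(\xhat_0,\eps_0) \in \UUU \times B(0,\rho)$ (for suitable $\rho>0$), every $\Pric_0 \in \KP$, and every $\omega_0 \in \S^{n-1}$, the solution of \eqref{E:kalman_coupled} with $\delta = 0$ satisfies $C\omega(t) \neq 0$ for some $t \in [0,T_0]$. This follows from Proposition~\ref{prop:obs} applied to the unperturbed constant input $u \equiv 0$ together with $(C,A)$ observable, then propagated to nearby initial conditions and nearby (small, compactly supported) perturbations by continuous dependence of solutions on initial data and parameters — and since $\delta \in \VV_\RRR$ vanishes on $B(0,\RRR)$, on a short enough time interval the trajectory starting inside $B(0,\RRR)$ stays there and sees only $\delta = 0$. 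A shrinking argument (choose $\UUU$, $\rho$, then $T_0$, then $\RRR$ and the perturbation size $\eta_1 \leq \eta$) makes this precise; the set where $C\omega$ must be checked is compact, $C\omega$ is continuous in $(t,$ initial data, $\delta)$, so the witnessing time can be taken uniform.

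For the complementary region — initial conditions with $(\xhat_0,\eps_0)$ bounded away from $(0,0)$, say $|\xhat_0|^2 + |\eps_0|^2 \geq \rho^2/2$ — I would apply Theorem~\ref{Thm:main} on a modified compact set $\K' = \Kx' \times \Keps \times \KP$ where $\Kx' \subset \Kx$ is a semi-algebraic compact subset that excludes a small ball around $0$ but still contains the projections of this far-region, and that satisfies $0 \notin \Kx'$. (One must also intersect $\Keps$, or enlarge the relevant compact in the $\eps$ variable, with the region $\{(\xhat_0,\eps_0) : |\xhat_0|^2+|\eps_0|^2 \geq \rho^2/2\}$; since we only need $\xhat_0 \notin$ small ball OR $\eps_0 \notin$ small ball, a finite cover by two such configurations suffices.) Theorem~\ref{Thm:main} then yields $\eta>0$, $k \in \N$, and a dense open $\OOO_1 \subset \NNN(k,\Kx',\eta)$ giving the strong conclusion \eqref{main_eq}, which by Remark~\ref{rk:obs1} (i.e. Corollary~\ref{obs}) implies $C\omega(t) \neq 0$ for some $t$ in any fixed time, in particular $[0,T]$. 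Finally I would combine the two: take $k$ and $\eta$ from the theorem (shrinking $\eta$ to also respect \NFOT\ and the stabilization-preserving bound in the Remark following the definition of $\NNN$), restrict to $\VV_\RRR$, and set $\OOO = \{\delta \in \NNN(k,\Kx,\eta) \cap \VV_\RRR : \delta|_{\Kx'} \in \OOO_1\}$; one checks this is dense and open in $\NNN(k,\Kx,\eta)\cap\VV_\RRR$ because restriction $\delta \mapsto \delta|_{\Kx'}$ is an open continuous surjection in the relevant $C^k$ topology and $\VV_\RRR$ imposes vanishing only on a set disjoint from $\Kx'$. For $\delta \in \OOO$: near-target initial conditions are handled by the unperturbed computation above (using $\delta \equiv 0$ on $B(0,\RRR)$), far-from-target ones by $\OOO_1$ and Theorem~\ref{Thm:main}; in both cases $C\omega(t) \neq 0$ for some $t \in [0,T]$, which by Proposition~\ref{prop:obs} is exactly observability of \eqref{E:observation_system} in time $T$ for $u = (\lambda+\delta)\circ\xhat$.

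The main obstacle I anticipate is the bookkeeping at the seam between the two regions: ensuring a single pair $(k,\eta)$ and a single $\RRR$ work simultaneously for the transversality conclusion on $\Kx'$, for the uniform-time near-target estimate (which needs $\RRR$ small and $\eta$ small, but $\Kx'$ wants $\RRR$ not too small so the ball it excludes is covered), and for preserving local asymptotic stability with domain of attraction containing the relevant compact — all while keeping $\OOO$ genuinely dense and open in $\NNN(k,\Kx,\eta)\cap\VV_\RRR$ rather than merely in $\NNN(k,\Kx',\eta)$. I expect this is resolved by fixing the geometric data ($\RRR$, then $\UUU$, $\rho$) first, deriving $T_0$, and only then invoking Theorem~\ref{Thm:main} and \NFOT\ to produce $(k,\eta)$, shrinking $\eta$ once more if needed.
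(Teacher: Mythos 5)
Your overall architecture matches the paper's: split $\K$ into a neighbourhood of the target, where $\delta\in\VV_\RRR$ vanishes and observability comes from perturbing the constant input $0$ (using that $(C,A)$ is observable), and a far region where Corollary~\ref{obs} (i.e.\ Theorem~\ref{Thm:main}) applies; then glue via the restriction map. However, there is a genuine gap in how you cover the initial-condition set. Your near-target estimate is obtained by continuous dependence from the equilibrium trajectory, so it only covers $(\xhat_0,\eps_0)\in\UUU\times B(0,\rho)$; the complementary region $\{|\xhat_0|^2+|\eps_0|^2\geq\rho^2/2\}$ then contains points with $\xhat_0=0$ (or arbitrarily close to $0$) and $\eps_0$ large. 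Theorem~\ref{Thm:main} cannot be invoked there: its hypothesis is $0\notin\Kx$, a condition on the $\xhat$-compact alone, and your parenthetical ``finite cover by two configurations'' would require applying it with $0\in\Kx'$ but $0\notin\Keps$, which the statement does not license. Nor can you rescue the near-target argument by shrinking the time: if $\eps_0$ is large the correction term $-\LLL(\Pric)C\eps$ can expel $\xhat$ from $B(0,\RRR)$ in time $T_0\sim\RRR$, and the observability margin of the unperturbed system over $[0,T_0]$ degenerates like $T_0^{n-1}$ while the input size only scales like $\RRR$, so the comparison fails for $n\geq 3$. In short, the region ``$\xhat_0$ near $0$, $\eps_0$ far from $0$'' is covered by neither half of your argument.

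The paper closes exactly this hole with Proposition~\ref{Prop_zero}, which is valid for \emph{arbitrary} $\eps_0\in\R^n$: observability of \eqref{E:observation_system} for the input $u=(\feed+\delta)\circ\xhat$ depends only on the trajectory $t\mapsto\xhat(t)$, not on $\eps_0$, so one conditions on the $\xhat$-trajectory itself rather than on the initial data. Either $\xhat(t)\in B(0,\RRR)$ for all $t\in[0,T]$, in which case the input stays small on the \emph{full} interval $[0,T]$ and the quantitative estimates of Lemmas~\ref{L:lemma1} and~\ref{L:lemma2} (with the fixed margin $\eta_0$ associated with the fixed horizon $T$) give observability; or the trajectory exits $B(0,\RR)$ at some $t_0<T$, in which case one restarts from $(\xhat(t_0),\eps(t_0),\Pric(t_0))$ and applies Corollary~\ref{obs} on $[t_0,T]$. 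Your proposal lacks this dichotomy-and-restart mechanism, and without it the decomposition does not cover $\K\times\S^{n-1}$. The remaining bookkeeping you flag (nesting the radii $\rho<\RR<\RRR$ so that the vanishing constraint of $\VV_\rho$ does not interfere with the free perturbation on $\Kx\setminus B(0,\RR)$, and openness/density of $\OOO'$ in the induced topology) is handled in the paper essentially as you anticipate.
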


\startmodif
This result also implies a generic observability property directly on the stabilizing state feedback law $\lambda$.
\begin{corollary}\label{cor:feedback}
Assume that the pairs $(C, A)$ and $(C, B)$ are observable.
Assume that $0$ is in the interior of $\Kx$.
Denote by $\Lambda$ the set of feedbacks $\lambda\in C^\infty(\R^n, \R)$ such that $0$ is a locally asymptotically stable equilibrium point of
the vector field $x \mapsto \Au{\lambda(x)}x + b\lambda(x)$.
Let $T>0$ and $\FFF_T\subset\Lambda$ be the set of feedbacks $\lambda\in\Lambda$ such that
\eqref{E:observation_system} is observable in time $T$ for the control $u = \feed \circ \xhat$, where $\xhat$ follows \eqref{E:kalman_coupled} with $\delta\equiv0$ and initial conditions $(\xhat_0, \eps_0, \Pric_0)$ in $\K$.
Then $\FFF_T$ is a dense open subset of $\Lambda$.
\end{corollary}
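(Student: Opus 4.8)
The plan is to derive the statement from Corollary~\ref{Cor:main} via one pivotal remark: running \eqref{E:kalman_coupled} with base feedback $\lambda$ and perturbation $\delta$ produces exactly the same closed loop---hence the same input $u=(\feed+\delta)\circ\xhat$ driving \eqref{E:observation_system} and the same trajectory $\omega$---as running \eqref{E:kalman_coupled} with base feedback $\lambda+\delta$ and perturbation $\delta\equiv0$. Consequently, Corollary~\ref{Cor:main}, applied with an arbitrary $\lambda\in\Lambda$ taken as base feedback, yields perturbed feedbacks $\lambda+\delta$ that lie in $\FFF_T$ as soon as they still lie in $\Lambda$. Throughout, $\Lambda$ and $\FFF_T$ carry the subspace topology induced by the Whitney $C^\infty$ topology on $C^\infty(\R^n,\R)$.

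For \emph{density}, I would fix $\lambda\in\Lambda$ and a Whitney neighbourhood $U$ of $\lambda$, and apply Corollary~\ref{Cor:main} with $\lambda$ as base feedback (keeping $\K$ and the observer $(f,\LLL)$ fixed). It supplies $\RRR,\eta>0$, $k\in\N$ and a subset $\OOO$, dense and open in $\NNN(k,\Kx,\eta)\cap\VV_\RRR$, such that every $\delta\in\OOO$ renders \eqref{E:observation_system} observable in time $T$ for $u=(\feed+\delta)\circ\xhat$ and all initial data in $\K$; by the pivotal remark this is precisely the assertion $\lambda+\delta\in\FFF_T$ provided $\lambda+\delta\in\Lambda$. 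Now $0\in\NNN(k,\Kx,\eta)\cap\VV_\RRR$ and $\OOO$ is dense there, so $\OOO$ meets the Whitney neighbourhood $U-\lambda$ of $0$; shrinking $U$ so that, in addition, every element of $U-\lambda$ satisfies the smallness condition of the Remark following the definition of $\NNN$ (taken with $\UUU$ a relatively compact open subset of $\D(\lambda)$ containing $0$), the corresponding $\lambda+\delta$ keeps $0$ locally asymptotically stable, hence lies in $\Lambda$. Then $\lambda+\delta\in\FFF_T\cap U$, which proves density.

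For \emph{openness}, I would fix $\lambda\in\FFF_T$. By \FC the solutions of \eqref{E:kalman_coupled} (with $\delta\equiv0$) issued from the compact set $\K\times\S^{n-1}$ are defined on $[0,+\infty)$, so their restrictions to $[0,T]$ fill a compact set $Q\subset\R^n\times\R^n\times\sym\times\S^{n-1}$; fix a compact neighbourhood $Q'$ of $Q$ and a compact $K'\subset\R^n$ containing the projection of $Q'$ onto the $\xhat$-coordinate. By Proposition~\ref{prop:obs}, $\lambda\in\FFF_T$ amounts to $\max_{t\in[0,T]}\abs{C\omega(t)}>0$ for every initial condition in $\K\times\S^{n-1}$; continuity in the initial condition makes the minimum $m$ of this quantity over $\K\times\S^{n-1}$ strictly positive. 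For $\lambda'\in\Lambda$ with $\norm{\lambda'-\lambda}_{0,K'}$ small, a trapping argument combined with Gr\"onwall's inequality shows that the solutions of \eqref{E:kalman_coupled} with base feedback $\lambda'$ from $\K\times\S^{n-1}$ stay in $Q'$ over $[0,T]$---hence are defined there---and remain uniformly close, in $t\in[0,T]$ and in the initial condition, to those for $\lambda$; therefore the same positivity holds for $\lambda'$: $\max_{t\in[0,T]}\abs{C\omega(t)}\ge m/2>0$ at every initial condition, i.e.\ $\lambda'\in\FFF_T$. Since $\{\lambda':\norm{\lambda'-\lambda}_{0,K'}<\epsilon\}$ is Whitney-open, $\FFF_T$ is open in $\Lambda$.

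The main obstacle is not the transversality input---that is Corollary~\ref{Cor:main}---but two bookkeeping points: first, that Corollary~\ref{Cor:main} may indeed be invoked with \emph{any} $\lambda\in\Lambda$ as base feedback, which requires the standing hypotheses (observer forward completeness \FC, no flat observer trajectories \NFOT, and the inclusion $\Kx\subset\D(\lambda)$) to hold for every such $\lambda$; second, the uniform-in-initial-condition continuous dependence of the $\omega$-trajectory on the feedback, together with the trapping argument keeping perturbed trajectories alive up to time $T$ although $\lambda$ is only \emph{locally} stabilizing. Both reduce to standard ODE estimates once $Q,Q',K'$ are in place.
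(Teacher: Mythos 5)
Your proof is correct and follows essentially the same route as the paper's: density is obtained by applying Corollary~\ref{Cor:main} to an arbitrary $\lambda\in\Lambda$ and identifying the closed loop for $(\lambda,\delta)$ with that for $(\lambda+\delta,0)$, and openness follows from compactness of $\K\times\S^{n-1}$ together with continuity of the map sending the feedback to $C\omega|_{[0,T]}$. You are in fact more explicit than the paper on two points it leaves implicit---that $\lambda+\delta$ must be checked to remain in $\Lambda$ (via the stability-preservation remark), and that the infinite intersection defining $\FFF_T$ is open only thanks to the uniformity supplied by compactness and your Gr\"onwall/trapping estimates.
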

The proof of these two corollaries can be found in Section~\ref{sec:target}.
\stopmodif

\begin{remark}
\startmodif
Because $\VV_\RRR$ is not open in the Whitney $C^\infty$ topology,
the set $\OOO$ defined in
Corollary~\ref{Cor:main}
is not open in the Whitney $C^\infty$ topology,
but it is open in the induced topology on $\NNN(k, \Kx, \eta) \cap \VV_\RRR$.
\stopmodif
Also, the set of matrices $(A, B, C)\in \End(\R^n)^2\times\LL(\R^n, \R)$ such that $(C, A)$ and $(C, B)$ are both observable is open and dense.
As a consequence,
``$(C, A)$ and $(C, B)$ are observable'' is a generic hypothesis.
\startmodif
Contrarily to the strategy followed in \cite{MarcAurele} on some specific example, the results of this paper do not explicitly design any perturbation $\delta\in\OOO$, but rather state that for almost all bilinear system, almost all perturbation $\delta\in\NNN(k, \Kx, \eta) \cap\VV_\RRR$ belongs to $\OOO$ (in a topological sense).
\stopmodif
\end{remark}

Finally,
the next theorem shows that the classical Luenberger and Kalman observers fit hypotheses~\ref{FC} and \ref{NFOT}. Hence, our results may be applied to these well-known observers.

\begin{theorem}\label{thm:appli}
Assume that $(C, A)$ is observable.
Assume that $\lambda$ is bounded over $\D(\lambda)$.
Let $Q\in\sym_n$.
For all $\Pric\in\sym_n$ and all $u\in\R$,
consider the following well-known observers:
\begin{align}
&f^\mathrm{Luenberger}(\Pric, u) = 0\tag{Luenberger observer}\\
&f^\mathrm{Kalman}_{Q}(\Pric, u) = \Pric \Au{u}^* + \Au{u} \Pric + Q - \Pric C^*C\Pric \tag{Kalman observer}
\end{align}
and $\LLL(\xi) = \Pric C^*$.
Then the coupled system~\eqref{E:kalman_coupled} given by $(f, \LLL)$ satisfies the hypotheses~\ref{FC} and \ref{NFOT} for any $f\in\{f^\mathrm{Luenberger}, f^\mathrm{Kalman}_{Q}\}$.
\end{theorem}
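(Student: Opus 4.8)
The statement to prove (Theorem~\ref{thm:appli}) has two parts for each observer: forward completeness \ref{FC}, and the no-flat-observer-trajectories property \ref{NFOT}. I would treat the two observers in parallel, since the Luenberger case $f\equiv 0$ is a degenerate special case of the Kalman case.

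\medskip\noindent
\textbf{Step 1: Forward completeness \ref{FC}.}
For the Luenberger observer there is nothing to prove for the $\Pric$-equation ($\dot\Pric=0$ is trivially complete). For the Kalman observer, $\dot\Pric = \Pric\Au{u}^* + \Au{u}\Pric + Q - \Pric C^*C\Pric$ is a Riccati equation; its forward completeness on $\sym_n$ (indeed with $\Pric$ staying positive definite and bounded) is classical under observability of $(C,A)$ and boundedness of $u$ --- one bounds $\Pric(t)$ above by the solution of the linear Lyapunov equation (dropping the good quadratic term $-\Pric C^*C\Pric$) and below by a positive constant using the observability Gramian, so $\Pric$ stays in a fixed compact subset of the positive definite cone. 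Once $\Pric$ is globally defined and bounded, the $(\xhat,\eps)$-subsystem in \eqref{E:kalman_coupled} is, for fixed $\delta$ bounded over $\D(\lambda)$ and fixed bounded gain $\LLL(\Pric)=\Pric C^*$, governed by vector fields with at most linear growth in $(\xhat,\eps)$ (the only nonlinearity is through $(\lambda+\delta)(\xhat)$, which is bounded on $\D(\lambda)$ by the remark preceding the Main Issue, hence $\Au{(\lambda+\delta)(\xhat)}$ is bounded); this gives global existence and uniqueness, and smoothness is automatic since all data are smooth. The $\omega$-equation on $\S^{n-1}$ is then driven by a bounded time-dependent matrix, so it too is complete. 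I would phrase this as: reduce to boundedness of $\Pric$, then invoke a Gronwall argument on $(\xhat,\eps,\omega)$.

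\medskip\noindent
\textbf{Step 2: No flat observer trajectories \ref{NFOT}.}
This is the substantive part. Fix $R>0$; take $\delta\in\VV_R$ small, and an initial condition with $(\xhat_0,\eps_0)\neq(0,0)$. I must produce $k$ with $\xhat^{(k)}(0)\neq 0$. Suppose, for contradiction, that $\xhat^{(k)}(0)=0$ for all $k\ge 1$ (the case $\xhat_0\neq 0$ is handled by $k=0$), i.e. the $\xhat$-trajectory is \emph{formally flat} at $t=0$; since the system is real-analytic in its state once $\delta$ and the observer data are analytic --- here one must be careful, $\delta$ is only smooth, but near $t=0$ we can exploit that $\delta$ is supported outside $B(0,R)$, so if $\xhat_0\in B(0,R)$ then $\delta\equiv 0$ along the trajectory for small time and everything is analytic; if $\xhat_0\notin B(0,R)$ one argues differently. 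The core idea: from the $\xhat$-equation, $\dot\xhat = \Au{(\lambda+\delta)(\xhat)}\xhat + b(\lambda+\delta)(\xhat) - \Pric C^*C\eps$. If $\xhat\equiv 0$ (formally) near $0$, then $\xhat_0=0$ and all derivatives vanish, which forces (using $\lambda(0)=0$, $\delta(0)=0$ since $0\in B(0,R)$) $\Pric_0 C^*C\eps_0 = 0$ and, differentiating, a cascade of conditions. Using that $\Pric_0$ is invertible (Kalman case) or $\LLL$ as prescribed, $C\eps$ and its derivatives must vanish, so by the observability characterization (Proposition~\ref{prop:obs} applied to the $\eps$-dynamics, which has the same drift $\Au{u}$ structure) combined with observability of $(C,A)$, one concludes $\eps_0=0$; then also $\xhat_0=0$, contradicting $(\xhat_0,\eps_0)\neq(0,0)$. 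The smallness of $\delta$ and the choice of $\eta$ enter to keep the feedback small enough that these algebraic conclusions are not spoiled, and $k$ can be taken uniform by a compactness argument over $\K$ (the "order of non-flatness" is upper semicontinuous and finite, hence bounded on the compact set of initial data).

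\medskip\noindent
\textbf{Main obstacle.}
The delicate point is \ref{NFOT} when $\xhat_0\notin B(0,R)$, so that $\delta$ does \emph{not} vanish identically along the initial trajectory and analyticity is lost: one cannot directly say "formally flat $\Rightarrow$ identically zero". I expect the resolution to be that \ref{NFOT} only needs \emph{existence} of some $k$ with $\xhat^{(k)}(0)\neq 0$ for each fixed initial condition, and the argument above shows that if \emph{all} derivatives of $\xhat$ at $0$ vanished, then in particular $\xhat_0=0\in B(0,R)$ --- a contradiction with $\xhat_0\notin B(0,R)$ in that branch, so that branch is vacuous, and in the remaining branch $\xhat_0\in B(0,R)$ we are back in the analytic setting. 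Thus the case split on whether $\xhat_0\in B(0,R)$ is exactly what makes the smoothness-vs-analyticity issue disappear. Packaging the uniform $\eta$ and uniform $k$ via compactness of $\Kx$ (and semi-algebraicity, to get definable/tame bounds) is then routine but needs to be stated carefully. I would devote most of the written proof to this cascade-of-derivatives computation and the compactness packaging, and keep Step~1 brief by citing standard Riccati bounds.
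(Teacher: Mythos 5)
Your Step 1 (forward completeness) matches what the paper does: it simply invokes the classical Riccati bounds (Proposition~\ref{prop:fc}), so that part is fine. The substantive problems are all in your Step~2, and they are genuine gaps rather than presentational issues.

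First, your analyticity argument does not work even in the branch you consider ``safe''. You argue that if $\xhat_0\in B(0,R)$ then $\delta\equiv 0$ near the trajectory and ``everything is analytic''; but the feedback $\feed$ itself is only $C^\infty$, so the closed-loop vector field is never analytic and ``formally flat at $t=0$ implies constant'' is unavailable in either branch. The paper avoids analyticity of the true trajectory altogether: assuming $\xhat^{(i)}(0)=0$ for all $i\ge 1$, Fa\`a di Bruno shows that all jets at $t=0$ of the true solution coincide with those of the auxiliary solution $(\xhatan,\epsan,\Pan)$ obtained by freezing the input at $\cont_0=(\feed+\delta)(\xhat_0)$; that auxiliary system \emph{is} analytic, so $\xhatan$ is genuinely constant and one can reason on it for all $t\ge 0$ (Lemma~\ref{lem4}). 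This frozen-input comparison is the missing idea. Second, you conflate ``all derivatives of order $\ge 1$ vanish'' with ``$\xhat\equiv 0$'': \ref{NFOT} demands a \emph{positive} $k$, so $\xhat_0\neq 0$ is not handled by $k=0$, and the hypothesis only tells you $\xhat_0$ is an equilibrium of $F_{\feed+\delta}$. Concluding $\xhat_0=0$ requires showing that for $\delta\in\VV_R$ with $\sup_{\Kx}|\delta|<\eta$ the origin is the \emph{unique} equilibrium of the perturbed closed loop in $\Kx$ (Lemma~\ref{lem1}); this is where $\eta$ actually enters, not in ``not spoiling algebraic conclusions''.

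Third, your cascade of derivatives at $t=0$ cannot invoke observability of $(C,A)$ directly, because the frozen dynamics is $\Au{\cont_0}=A+\cont_0 B$ with $\cont_0$ a priori nonzero, and $(C,\Au{\cont_0})$ need not be an observable pair. The paper's route is: perform the Kalman observability decomposition of $(C,\Au{\cont_0})$, note that $\xhatan$ constant forces the correction term $K=\Pric C^*C\epsan$ to be constant, and then use the \emph{asymptotic convergence} of the observable block of the correction term to $0$ (hypothesis~\ref{Hyp:conv}, valid for Luenberger and Kalman observers) to conclude that a constant converging to zero is zero, whence $K=0$ by the kernel condition \ref{Hyp:ker}. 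Only after this does one get $F_{\feed+\delta}(\xhat_0)=0$, hence $\xhat_0=0$, hence $\cont_0=0$, and only then does observability of $(C,A)$ yield $\eps_0=0$ via the induction $CA^k\eps_0=0$. Your sketch contains no long-time/convergence argument and no observability decomposition, and I do not see how the purely infinitesimal cascade closes without them. (Your final compactness packaging for a uniform $k$ is also unnecessary: \ref{NFOT} allows $k$ to depend on the initial condition.)
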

The proof of this theorem can be found in Section~\ref{sec:appli}.

\begin{remark}
If $\lambda$ is unbounded over $\D(\lambda)$, then for any open subset $\UUU$ relatively compact in $\D(\lambda)$,
we can obtain by smooth saturation of $\lambda$ a new bounded feedback law $\lambdasat$ such that ${\lambdasat}_{|U}={\lambda}_{|U}$, for which the previous statement holds. (In particular $U\subset\D(\lambdasat)$.)
\end{remark}

\section{Proofs of the observability statements}\label{sec:proof}

In order to prove our main Theorem~\ref{Thm:main} and its Corollary~\ref{Cor:main}, we need a series of preliminary results that we state and prove below.
The main results will appear as corollaries of these subsequent lemmas.

Before we start the more technical elements of the paper, let us present the method we follow in order to prove the main results.
Theorem~\ref{Thm:main} is an application of transversality theory to our particular problem
(see \cite{GMP} for the statements we rely on; see also \cite{abraham1967transversal,GG}).
Consider a solution to \eqref{E:kalman_coupled} for a given perturbation $\delta$ of the feedback law, and a set of initial conditions in $\K\times \S^{n-1}$. We set  $h:C^{\infty}(\R^n,\R)\times (\K\times \S^{n-1})\times \R^+\to \R$ to be the smooth map given by
$$
h(\delta,(\xhat_0, \eps_0, \Pric_0,\omega_0),t)=C\omega(t).
$$
As stated in Section~\ref{sec:state}, to get  observability after perturbation of the feedback,  we would like to show that there exists $\delta$, preferably small, such that 
\begin{equation}\label{E:obser_f}
t\mapsto h(\delta,z_0,t)\neq0,\qquad \forall z_0=(\xhat_0, \eps_0, \Pric_0,\omega_0)\in \K\times \S^{n-1}.
\end{equation}
\startmodif
A sufficient condition for $\delta$ to satisfy \eqref{E:obser_f} is that for each $z_0\in \K\times \S^{n-1}$, there exists  an integer $k$ such that
\(
\left.\frac{\diff^k }{\diff t^k}\right|_{t=0}(h(\delta ,z_0,t))\neq 0.
\)
\stopmodif
In other words,
our goal will be achieved if we can prove that there exists $\delta$ and a finite set $\I\subset \N$  such that the map $H:C^{\infty}(\R^n,\R)\times (\K\times \S^{n-1})\to \R^{|\I|}$ given by
\[
H(\delta,z_0)=\left(\left.\frac{\diff^k }{\diff t^k}\right|_{t=0}h(\delta,z_0,t)\right)_{k\in \I},
\]
never vanishes.
This is where transversality theory comes into play. Let $N$ denote the dimension of the surrounding space of $\K\times \S^{n-1}$.  
We can ensure that there exists $\delta$ satisfying \eqref{E:obser_f} if we can prove that for some choice of $\I$, with $|\I|>N$, $H$ is \emph{transversal} to $\{0\}$ at $\delta =0$.
That is to say, if we can prove that the rank of the map $H(0,\cdot)$ is maximal, equal to $|\I|>N$, at any of its vanishing points (at which point $H(0,\cdot)$ is then a submersion).

Now it should be noted that in general, proving that a map is transversal to a point is a major hurdle, especially if the dimensions $n$ and $N$ of the spaces are unspecified. As a general rule, 
considering more orders of derivation of $h$ greatly increases the degrees of freedom of the map $H$ (by including higher order derivatives of $v$, as jet spaces grow exponentially in dimension), while only slightly increasing the size of the target space. This points towards an augmentation of the rank of $H$, making a proof of transversality achievable.

The difficulty lies however in producing a ``rank increasing property'' on $H$ as $|\I|$ increases. That is, finding a symmetry in the successive derivatives of $h$ that proves that for any dimension, a  set $\I$ can be found by differentiating $h$ sufficiently many times.

The symmetry we use to prove the rank condition on the map $H$ can be described as follows.
For $k\in \N$, let 
$$
h^k(\delta,z_0,t)=CB^k\omega(t).
$$
It turns out that if $h^{k+1}(0,z_0,\cdot)$ has a non-zero derivative of any order (including order 0), then we automatically get the rank condition for $h^{k}(0,z_0,\cdot)$
(this statement will be made precise in Corollary \ref{C:rank_N}). 

Here the hypothesis that  $(C,B)$ is an observable pair becomes crucial. 
Indeed, observe that $h^{k}(0,z_0,0)=CB^k\omega_0$. Hence, for any $\omega_0\in \S^{n-1}$ there exists a $k\in\{0,\dots,n-1\}$ such that 
$$
h^{k}(0, z_0, 0)\neq 0.
$$
This in turns induces a partition of $\K\times \S^{n-1}$ into $n$ subsets on each of which at least one of the maps $h^0,\dots ,h^{n-1}$ never vanishes. Since 
$h^{k+1}(0,z_0,\cdot)$ not vanishing implies that the rank condition is satisfied for $h^{k}(0,z_0,\cdot)$, we chain-apply  $n$ successive transversality theorems to prove the existence of a $\delta$ such that $h(\delta,z_0,\cdot)$ has always at least one non-zero time derivative at any point   $z_0\in \K\times \S^{n-1}$.

Section~\ref{sec:prel} is aimed at making explicit the connection between the rank condition and the family of maps $(h^k)_{k\in\N}$. Section~\ref{Subsec:part2} is dedicated to the effective application of the principles presented in this introduction, which leads to the proof of Theorem~\ref{Thm:main}. Section~\ref{sec:target} concludes the proof of the observability statements by taking into account the behavior of the system near the target $0$.

\subsection{Preliminary results}\label{sec:prel}

Let $u\in C^\infty(\R_+,\R)$ and  consider the ordinary differential equation
\begin{equation}\label{E:omega}
\dot{\omega}=\left(A+u(t)B\right) \omega.
\end{equation}
For all $k,m\in \N$, let $\FF_k^m: C^\infty(\R_+,\R) \times \R^n \to \R$ be the function such that 
$$
\FF_k^m(u, \omega_0)=CB^m\omega^{(k)}(0)
$$
where $t\mapsto \omega(t) $ is the solution of \eqref{E:omega} with initial condition $\omega_0$.

Let us introduce the $n \times n$ matrix valued polynomials in the indeterminates $X_0, \dots, X_{k-1}$ by:
\begin{equation*}
\End(\R^n)[X_0,\dots X_{k-1}]
= \begin{cases}
\End(\R^n) &\mbox{if } k=0 \\
\End(\R^n)[X_0,\dots X_{k-2}][X_{k-1}] &\mbox{otherwise},
\end{cases}
\end{equation*}
and set
\[
\End(\R^n)\left[ (X_k)_{k \in \N} \right] = \bigcup_{k \in \N}\End(\R^n)[X_0,\dots X_{k-1}].
\]
\startmodif
Let $\Psi : \End(\R^n)\left[ (X_k)_{k \in \N} \right] \to \End(\R^n)\left[ (X_k)_{k \in \N} \right]$ be the linear map defined by
\[
\Psi (P)(X_0,\dots ,X_{k}) = P(X_0,\dots, X_{k-1})(A+X_0B)+\sum_{i=0}^{k-1}\frac{\partial P}{ \partial X_i}\left(X_0,\dots, X_{k-1}\right) X_{i+1},
\]
where $k = \min \left\{ \ell \in \N \mid P \in \End(\R^n)[X_0,\dots X_{\ell-1}] \right\}$.
\stopmodif

Finally,
let us define the family $(P_k)_{k\in \N}$ of matrix valued polynomials such that
$P_0 \in \End(\R^n)$ and $P_k\in  \End(\R^n)[X_0,\dots X_{k-1}]$, for all $k \ge 1$,
by
\begin{equation}\label{Eq:recurrence_relation}
P_0=\Id, \qquad
P_{k+1}
= \Psi (P_{k}), \qquad \forall k\in \N.
\end{equation}
It is clear\footnote{
Note that, for $k \neq 0$, the function $\FF_k^m$ actually acts on $(k-1)$-jets at zero of functions and not on functions themselves.
Consequently,
the restriction
\(
\left. \FF_k^m \right|_{J^\ell_0(\R, \R) \times \R^n}
\)
is well-defined as soon as $\ell \ge k-1$.
Of course, for $k=0$, the restriction
\(
\left. \FF_0^m \right|_{J^\ell_0(\R, \R) \times \R^n}
\)
makes sense only if $\ell \ge 0$.
In summary,
the restriction
\(
\left. \FF_k^m \right|_{J^\ell_0(\R, \R) \times \R^n}
\)
is well-defined as soon as $\ell \ge k$.}
that for all $m\in \N$,
\[
\FF_k^m(u, \omega_0)
= \begin{cases}
CB^m\omega_0	&\mbox{if~} k=0 \\
CB^mP_k\left(u^{(0)},u^{(1)},\dots , u^{(k-1)}\right)\omega_0	&\mbox{otherwise},
\end{cases}
\]
where $u^{(i)}$ is shorthand for $\frac{\diff^i u}{\diff t^i}(0)$ for all $i\in \N$.
For all $k\in \N$ and $i\in \N$, $1\leq i \leq k$, let  $Q_i^k=\dfrac{\partial P_k}{\partial X_{k-i}}$.

\begin{lemma}\label{lemma:polynomial}
For all $i\in \N\setminus\{0\}$, there exist $R_i^0,\dots, R_i^{i-1}\in  \End(\R^n)[X_0,\dots X_{i-1}]$ such that 
\footnote{Actually, we can show that $R_i^0,\dots, R_i^{i-1}\in  \End(\R^n)[X_0,\dots X_{i-2}]$}
\[
Q_{i}^{i+k} = \sum_{j=0}^{i-1} k^j R_i^j, \qquad \forall k \ge 0.
\]
Furthermore,
$\displaystyle R_{i}^{i-1}=\frac{BP_{i-1}}{(i-1)!}$.
\end{lemma}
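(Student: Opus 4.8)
The statement concerns the polynomials $Q_i^k = \partial P_k/\partial X_{k-i}$, and asserts two things: a polynomial-in-$k$ formula $Q_i^{i+k} = \sum_{j=0}^{i-1} k^j R_i^j$ with coefficients independent of $k$, and the identification of the leading coefficient $R_i^{i-1} = BP_{i-1}/(i-1)!$. The natural strategy is induction on $i$, using the recurrence $P_{k+1} = \Psi(P_k)$ to derive a recurrence for the $Q_i^k$'s, and then solving that recurrence in closed form. The first task is to differentiate the defining relation for $\Psi$ with respect to the variables $X_{k-i}$.

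\textbf{Step 1: a recurrence for $Q_i^k$.} Fix $k$ and apply $\partial/\partial X_{(k+1)-i} = \partial/\partial X_{k+1-i}$ to the identity
\[
P_{k+1}(X_0,\dots,X_k) = P_k(X_0,\dots,X_{k-1})(A+X_0B) + \sum_{\ell=0}^{k-1}\frac{\partial P_k}{\partial X_\ell}(X_0,\dots,X_{k-1})\,X_{\ell+1}.
\]
For $2 \le i \le k$, differentiating with respect to $X_{k+1-i}$ (which is one of $X_0,\dots,X_{k-1}$, but not $X_0$ since $i\ge 2$) gives, after matching the index $\ell+1 = k+1-i$ in the sum, an expression of the form
\[
Q_i^{k+1} = Q_{i-1}^{k}(A+X_0 B) + \sum_{\ell=0}^{k-1} \frac{\partial Q_{i-1}^{k}}{\partial X_\ell} X_{\ell+1} + Q_{i+1}^{k},
\]
i.e. $Q_i^{k+1} = \Psi(Q_{i-1}^k) + Q_{i+1}^k$ (with the natural conventions; a boundary case for $i=1$ involving the $X_0B$ term must be handled separately, and $Q_{k+1}^{k}=0$). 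The precise bookkeeping of which term of the sum survives the differentiation is the fiddly part, but it is a routine chain-rule computation. I would record this as an auxiliary lemma.

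\textbf{Step 2: solve the recurrence by induction on $i$.} Base case $i=1$: show $Q_1^{k+1} = \partial P_{k+1}/\partial X_k$ equals $B P_k$ — indeed, only the first term $P_k(A+X_0B)$ of the recurrence for $P_{k+1}$ depends on $X_k$ (through $A+X_0B$? no — through nothing; rather $P_{k+1}$ depends on $X_k$ only via the summand $\ell = k-1$, i.e. $(\partial P_k/\partial X_{k-1})X_k$), so $Q_1^{k+1} = \partial P_k/\partial X_{k-1} = Q_1^k$... hence one must instead track that $Q_1^{k} = \partial P_k/\partial X_{k-1}$ satisfies its own recurrence; carefully one finds $Q_1^{i+k}$ is constant in $k$ and equals $Q_1^i$, and a short separate computation identifies it with $BP_{i-1}/(i-1)! = BP_0 = B$ when $i=1$. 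This pins down $R_1^0 = B = BP_0/0!$, confirming the "furthermore" clause for $i=1$. For the inductive step, assume $Q_{i-1}^{(i-1)+k} = \sum_{j=0}^{i-2} k^j R_{i-1}^j$ with $R_{i-1}^{i-2} = BP_{i-2}/(i-2)!$. Apply the Step 1 recurrence with shifted indices: $Q_i^{(i+k+1)} = \Psi\big(Q_{i-1}^{(i-1)+(k+1)}\big) + Q_{i+1}^{(i+k+1)}$. The key observation is that $Q_{i+1}^{i+k+1} = \partial P_{i+k+1}/\partial X_{i+k-i} = \partial P_{i+k+1}/\partial X_k$, and by the same base-case reasoning this is $\partial/\partial X_k$ of a polynomial that does not involve $X_k$ once $k \le i+k-1$, hence vanishes for the relevant range — more care is needed here, but morally the $Q_{i+1}$ term is lower order or zero. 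Telescoping the recurrence $Q_i^{i+(k+1)} - Q_i^{i+k}$ (treating it as a discrete difference equation) and using that the difference of $\sum_j k^j R_i^j$ is a polynomial of degree $i-2$ in $k$ matching $\Psi(Q_{i-1}^{(i-1)+k})$ (which by induction has degree $i-2$), one solves for the $R_i^j$ by discrete integration: $R_i^{i-1}$ is determined by the leading term, giving $R_i^{i-1} = \frac{1}{i-1}\Psi(R_{i-1}^{i-2})$'s leading part, and $\Psi(BP_{i-2}/(i-2)!) = B\Psi(P_{i-2})/(i-2)! + (\text{terms from }\partial(BP_{i-2}))$; since $\Psi(P_{i-2}) = P_{i-1}$ and the extra derivative terms contribute to lower powers of $k$, the leading coefficient is $\frac{1}{i-1}\cdot\frac{BP_{i-1}}{(i-2)!} = \frac{BP_{i-1}}{(i-1)!}$, as claimed. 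The lower coefficients $R_i^0,\dots,R_i^{i-2}$ are then defined by the discrete antiderivative, automatically polynomial in the $X$'s and independent of $k$.

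\textbf{Main obstacle.} The real difficulty is not conceptual but combinatorial: keeping exact track, in Step 1, of which index in the sum $\sum_\ell (\partial P_k/\partial X_\ell)X_{\ell+1}$ survives differentiation by $X_{k+1-i}$, and correctly identifying the "extra" term $Q_{i+1}^k$ and its range of validity (boundary effects when $i$ is close to $k$, and the special role of $X_0$). The second delicate point is justifying that the $Q_{i+1}$-contribution and the derivative-of-$B P_{i-2}$ contributions are genuinely of strictly lower degree in $k$, so that the leading coefficient computation closes cleanly; this is what makes the footnote claim ($R_i^j \in \End(\R^n)[X_0,\dots,X_{i-2}]$, i.e. no dependence on $X_{i-1}$) plausible and is worth verifying alongside. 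Once the degree bookkeeping is pinned down, the discrete-integration argument producing the $R_i^j$ is formal.
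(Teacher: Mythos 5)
Your overall strategy is the paper's: induct on $i$, differentiate the recurrence $P_{k+1}=\Psi(P_k)$ to get a difference equation for the $Q_i^k$, telescope it in $k$, and use Faulhaber-type summation to extract the leading coefficient $R_i^{i-1}=\Psi(R_{i-1}^{i-2})/(i-1)=BP_{i-1}/(i-1)!$ (and indeed $\Psi(BP)=B\Psi(P)$ holds exactly, so the ``extra derivative terms'' you worry about do not exist). However, there is a genuine error in your Step~1 recurrence, and the argument you build on it would fail. Differentiating $P_{k+1}=P_k(A+X_0B)+\sum_{\ell=0}^{k-1}(\partial P_k/\partial X_\ell)X_{\ell+1}$ with respect to $X_{k+1-i}$, the surviving summand is the one with $\ell+1=k+1-i$, i.e.\ $\ell=k-i$, which contributes $\partial P_k/\partial X_{k-i}=Q_i^k$ --- not $Q_{i+1}^k$ as you wrote. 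The correct identity is $Q_i^{k+1}=\Psi(Q_{i-1}^k)+Q_i^k$ (equivalently, as in the paper, $Q_{i+1}^{i+1+\ell}=\Psi(Q_i^{i+\ell})+Q_{i+1}^{i+\ell}$), which is precisely a first-order difference equation in $k$ for fixed $i$ and is what licenses the telescoping $Q_i^{i+k+1}-Q_i^{i+k}=\Psi(Q_{i-1}^{(i-1)+(k+1)})$ that you invoke in Step~2.

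As written, your Step~2 is internally inconsistent with your Step~1: you carry the leftover term as $Q_{i+1}^{i+k+1}$ and then assert it ``vanishes for the relevant range'' or is ``lower order''. That assertion is false: e.g.\ $Q_2^2=\partial P_2/\partial X_0=B(A+X_0B)+(A+X_0B)B\neq0$. Moreover, if the leftover term really vanished, then $Q_i^{i+k}=\Psi(Q_{i-1}^{(i-1)+k})$ would have degree $i-2$ in $k$, contradicting the degree $i-1$ the lemma asserts. So the inductive step does not close with the recurrence you derived; the self-acknowledged ``more care is needed here'' is exactly where the proof breaks. The repair is purely an index fix --- replace the leftover $Q_{i+1}^k$ by $Q_i^k$ --- after which your telescoping and leading-coefficient computation go through and the argument coincides with the paper's.
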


\begin{proof}
We prove the first part of the statement by induction on $i$.
For $i=1$, one easily checks that %
\begin{equation}\label{Eq:initialisation:i=1}
Q_1^{1+k} = B, \qquad \forall k \in \N.
\end{equation}
Assuming the desired property for $i$, we have to prove that
there exist $R_{i+1}^0, \dots, R_{i+1}^{i} \in \End(\R^n)[X_0,\dots X_{i}]$ such that
\[
Q_{i+1}^{i+1+k} = \sum_{j=0}^{i} k^j R_{i+1}^{j}, \qquad \forall k \ge 0.
\]
Using the definition of $Q_{i+1}^{i+1 +\ell}$ and the recurrence relation \eqref{Eq:recurrence_relation}
yields
\begin{equation}\label{Eq:increments}
Q_{i+1}^{i+1 +\ell}
= \Psi(Q_{i}^{i+\ell}) + Q_{i+1}^{i+\ell}, \qquad \forall \ell \ge 1.
\end{equation}
Consequently,
for all $k \ge 0$,
\begin{align}
Q_{i+1}^{i+1+ k}
& = \sum_{\ell = 1}^{k} \left( Q_{i+1}^{i+1+\ell} - Q_{i+1}^{i+\ell} \right) + Q_{i+1}^{i+1}
\nonumber\\
& = \sum_{\ell = 1}^{k} \left(  \Psi(Q_{i}^{i+\ell})\right) + Q_{i+1}^{i+1}
\tag{by \eqref{Eq:increments}}\\
& = \sum_{\ell = 1}^{k} \left(  \sum_{j=0}^{i-1} \ell^j \Psi(R_{i}^{j})\right) + Q_{i+1}^{i+1}
\tag{by induction hypothesis}\\
& = \sum_{j=0}^{i-1}\left(  \sum_{\ell = 1}^{k} \ell^j \right) \Psi(R_{i}^{j}) + Q_{i+1}^{i+1}
\nonumber\\
& = \sum_{j=0}^{i-1} S^j(k) \Psi(R_{i}^{j}) + Q_{i+1}^{i+1}, \qquad \text{with~}S^j(k)=\sum_{\ell = 1}^{k} \ell^j.
\nonumber
\end{align}
Note that $Q_{i+1}^{i+1},
\Psi(R_{i}^{j}) \in \End(\R^n)[X_0, \dots, X_i]$ for all $j \in \{0, \dots, i-1\}$
($Q_{i+1}^{i+1} = \partial{P_{i+1}}/\partial{X_0}$).
Moreover, according to Faulhaber's formula, %
we have
\[
S^j(k) = \frac{k^{j+1}}{j+1}  + T^j(k), \qquad \forall j, k\in \N,
\]
where $T^j(k)$ is a polynomial in the variable $k$ of degree $j$ with no constant term.
Consequently,
\begin{align}
Q_{i+1}^{i+1+ k}
&= \frac{k^{i}}{i}\Psi(R_{i}^{i-1})  + \left( T^{i-1}(k)\Psi(R_{i}^{i-1}) + \sum_{j=0}^{i-2}S^j(k) \Psi(R_{i}^{j}) \right) + Q_{i+1}^{i+1}
\nonumber\\
&= k^{i} R_{i+1}^{i}  + \sum_{j=1}^{i-1} k^j R_{i+1}^j + R_{i+1}^0
\nonumber\\
&= \sum_{j=0}^{i} k^j R_{i+1}^j,
\nonumber 
\end{align}
with
$R_{i+1}^i = \Psi(R_{i}^{i-1})/i$, $R_{i+1}^0 = Q_{i+1}^{i+1}$
and
$R_{i+1}^{j} \in \End(\R^n)[X_0, \dots, X_i]$  for all $j \in \{0, \dots, i\}$.

The second part of the statement easily follows by induction.
Indeed,
\begin{align*}
BP_0 = Q_1^1 = \sum_{j=0}^{0} 0^j R_1^j = R_1^0,
\end{align*}
and
\[
R_{i+1}^i
= \frac{\Psi(R_{i}^{i-1})}{i}  
= \frac{1}{i} \Psi \left( \frac{1}{(i-1)!}BP_{i-1} \right) 
= \frac{1}{i!} B\Psi(P_{i-1})
= \frac{1}{i!} BP_i.
\]
The statement follows.
\end{proof}

\begin{corollary}\label{C:CBPk}
Let $i, m\in \N$, $i \ge 1$.
Let $v\in\R^i$ and $\omega_0\in \R^n$.
Either there exists $k_0\geq i$ such that $CB^m Q_i^k(v) \omega_0\neq 0$ for all $k\geq k_0$
or $C B^mQ_i^k(v)\omega_0=0$ for all $k\geq i$.
\end{corollary}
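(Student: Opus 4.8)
The plan is to read this off directly from Lemma~\ref{lemma:polynomial}. Fix $i\geq 1$, $m\in\N$, $v\in\R^i$ and $\omega_0\in\R^n$. By Lemma~\ref{lemma:polynomial}, for every $k\geq 0$ one has $Q_i^{i+k}=\sum_{j=0}^{i-1}k^jR_i^j$ with $R_i^0,\dots,R_i^{i-1}$ independent of $k$, so that
\[
CB^mQ_i^{i+k}(v)\omega_0 = \sum_{j=0}^{i-1}k^j\,c_j, \qquad c_j:=CB^mR_i^j(v)\omega_0\in\R .
\]
The right-hand side is the evaluation at the integer $k$ of the real polynomial $p(k)=\sum_{j=0}^{i-1}c_jk^j$, of degree at most $i-1$.

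Now I would distinguish two cases according to whether $p$ is the zero polynomial. If all the $c_j$ vanish, then $p\equiv 0$, hence $CB^mQ_i^{i+k}(v)\omega_0=0$ for every $k\geq 0$; replacing $i+k$ by $k$, this says $CB^mQ_i^k(v)\omega_0=0$ for all $k\geq i$, which is the second alternative. Otherwise $p$ is a nonzero polynomial and therefore has at most $i-1$ real roots, so there is an integer $N\geq 0$ with $p(k)\neq 0$ for every real $k\geq N$. Setting $k_0:=i+N\geq i$ and again substituting $i+k$ by $k$, we obtain $CB^mQ_i^k(v)\omega_0\neq 0$ for all $k\geq k_0$, which is the first alternative.

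There is essentially no obstacle: all the work is done by Lemma~\ref{lemma:polynomial}, and the only points requiring a little care are the index shift between the two parametrisations $Q_i^{i+k}$ (for $k\geq 0$) and $Q_i^k$ (for $k\geq i$), and checking that the resulting threshold satisfies $k_0\geq i$. If useful downstream, I would also record in the proof that in the second alternative the polynomial identity in fact forces $c_j=CB^mR_i^j(v)\omega_0=0$ for every $j\in\{0,\dots,i-1\}$, since it is this dichotomy (a fixed finite truncation of derivatives either forces non-vanishing eventually or vanishes identically) that will be fed into the successive transversality arguments.
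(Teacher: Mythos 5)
Your proof is correct and follows essentially the same route as the paper: both reduce the claim, via Lemma~\ref{lemma:polynomial}, to the observation that $k\mapsto CB^mQ_i^{i+k}(v)\omega_0$ is a polynomial of degree at most $i-1$ in $k$, which either vanishes identically or has at most $i-1$ roots. The only cosmetic difference is that the paper packages the ``at most $i-1$ roots'' step as the invertibility of a Vandermonde matrix, whereas you invoke the root bound for nonzero polynomials directly.
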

\begin{proof}
By Lemma \ref{lemma:polynomial}, we have
$Q_i^k=\sum_{j=0}^{i-1} (k-i)^j R_i^j$ for all integer $k\geq  i$. If $CB^m R_i^j(v) \omega_0= 0$ for all $j \in \{0, \dots, i-1\}$, then $C B^mQ_i^k(v)\omega_0=0$ for all $k\geq i$.
Otherwise, there exists $j \in \{0, \dots, i-1\}$ such that $CB^m R_i^j(v) \omega_0\neq 0$. Let $(k_0,\dots k_{i-1}) \in\N^i$ with $k_0<\dots<k_{i-1}$.
We have
\begin{align*}
C B^m
\begin{pmatrix}
Q_i^{i+k_0}(v)\\
\vdots\\
Q_i^{i+k_{i-1}}(v)
\end{pmatrix}
\omega_0
=
\begin{pmatrix}
1&k_0&\dots&k_0^i\\
\vdots&\vdots&&\vdots\\
1&k_{i-1}&\dots&k_{i-1}^i\\
\end{pmatrix}
C B^m
\begin{pmatrix}
R_i^{0}(v)\\
\vdots\\
R_i^{{i-1}}(v)
\end{pmatrix}
\omega_0.
\end{align*}
Since $k_0,\dots k_{i-1}$ are pairwise different, the Vandermonde matrix is invertible.
Consequently, there exits $j\in\{0, \dots, i-1\}$ such that $C B^mQ_i^{i+k_j}(v)\omega_0\neq0$. Hence, there exists at most $i-1$ positive integers $k_j$ such that $C B^mQ_i^{i+k_j}(v)\omega_0=0$. Thus, there exists $k_0 \geq i$ such that $CB^m Q_i^k(v) \omega_0\neq 0$ for all $k\geq k_0$.
\end{proof}

For all $P\in\End(\R^n)[X_0,\dots X_{k-1}]$ and all $v\in\R^\N$, we set $P(v) = P(v_0,\dots,v_{k-1})$.

\begin{corollary}\label{C:rank_N}
Let $v\in\R^\N$, $\omega_0\in \R^n$ and $m\in \N$.
If there exists $i \in \N\setminus\{0\}$ such that $CB^{m+1}P_{i-1}(v)\omega_0\neq 0$,
then there exists $k_0\in \N$ such that, for all $N\in \N \setminus \{0\}$,
\startmodif
the mapping
\footnote{Note that $\phi(\cdot)=\FF_{\{k_0,\dots, k_0+N-1\}}^m(\cdot,\omega_0)$, with $\FF_{\{k_0,\dots, k_0+N-1\}}^m$ defined as in Section~\ref{Subsec:part2}}
$\phi:J^{k_0+N-1}_0(\R, \R)=\R^{k_0+N}\to\R^N$ defined by
$$\phi(\cdot) = (CB^m P_{k_0}(\cdot)\omega_0,\dots,CB^mP_{k_0+N-1}(\cdot)\omega_0)$$
\stopmodif
has a rank $N$ differential at $(v_0,\dots,v_{k_0+N-1})$.
\end{corollary}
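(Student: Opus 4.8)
The plan is to show that the differential of $\phi$ at the point $v' = (v_0,\dots,v_{k_0+N-1})$ has rank $N$ by exhibiting, for each component $CB^m P_{k_0+\ell}(\cdot)\omega_0$ ($\ell = 0,\dots,N-1$), a ``leading'' partial derivative with respect to the highest-index variable actually appearing, in such a way that the resulting $N\times(k_0+N)$ Jacobian is, after reordering columns, upper triangular with nonzero diagonal. Concretely, recall that $P_k \in \End(\R^n)[X_0,\dots,X_{k-1}]$, so $CB^m P_{k_0+\ell}(\cdot)\omega_0$ depends on the variables $X_0,\dots,X_{k_0+\ell-1}$ and, crucially, on $X_{k_0+\ell-1}$ only through $Q_{1}^{k_0+\ell} = \partial P_{k_0+\ell}/\partial X_{k_0+\ell-1}$. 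By Lemma~\ref{lemma:polynomial} (the case $i=1$, equation~\eqref{Eq:initialisation:i=1}), $Q_1^{k_0+\ell} = B$ identically, so
\[
\frac{\partial}{\partial X_{k_0+\ell-1}}\left( CB^m P_{k_0+\ell}(\cdot)\omega_0 \right) = CB^{m+1}\,\frac{\partial P_{k_0+\ell}}{\partial X_{k_0+\ell-1}}(\cdot)\,\omega_0 \text{ is a multiple-free expression;}
\]
more precisely this partial derivative equals $CB^{m+1}\omega_0$ when $\ell$ corrects the index bookkeeping — but this is not yet what we want, since we need it to be \emph{nonzero} at $v'$ and the variable indices to stagger. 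Let me instead use the finer handle provided by Corollary~\ref{C:CBPk} together with the second part of Lemma~\ref{lemma:polynomial}.

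Here is the cleaner route. By hypothesis $CB^{m+1}P_{i-1}(v)\omega_0 \neq 0$ for some $i \geq 1$. Apply Corollary~\ref{C:CBPk} with this $i$, with $B^m$ replaced by $B^m$, and with the vector $v$: since $R_i^{i-1} = BP_{i-1}/(i-1)!$, we get $CB^m R_i^{i-1}(v)\omega_0 = CB^{m+1}P_{i-1}(v)\omega_0/(i-1)! \neq 0$, so we are in the first alternative of Corollary~\ref{C:CBPk}: there exists $k_0 \geq i$ such that $CB^m Q_i^k(v)\omega_0 \neq 0$ for \emph{all} $k \geq k_0$. Now fix this $k_0$. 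For each $\ell \in \{0,\dots,N-1\}$, consider the partial derivative of the $\ell$-th component of $\phi$ with respect to the variable $X_{k_0+\ell-i}$: since $P_{k_0+\ell} \in \End(\R^n)[X_0,\dots,X_{k_0+\ell-1}]$ and $i \geq 1$, this variable index lies in $\{0,\dots,k_0+\ell-1\}$ (using $k_0 \geq i$), and
\[
\frac{\partial}{\partial X_{k_0+\ell-i}}\Bigl(CB^m P_{k_0+\ell}(\cdot)\omega_0\Bigr) = CB^m\,\frac{\partial P_{k_0+\ell}}{\partial X_{(k_0+\ell)-i}}(\cdot)\,\omega_0 = CB^m Q_i^{k_0+\ell}(\cdot)\,\omega_0,
\]
which at the point $v'$ (whose relevant coordinates agree with $v$) equals $CB^m Q_i^{k_0+\ell}(v)\omega_0 \neq 0$ because $k_0 + \ell \geq k_0$.

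The key observation making the Jacobian full rank is then purely combinatorial: the distinguished columns are indexed by $X_{k_0+\ell-i}$ for $\ell = 0,\dots,N-1$, i.e.\ by the \emph{distinct} indices $k_0-i, k_0-i+1, \dots, k_0-i+N-1$, all of which lie in $\{0,\dots,k_0+N-1\}$. Moreover, the $\ell$-th component $CB^m P_{k_0+\ell}(\cdot)\omega_0$ does not involve any variable $X_j$ with $j \geq k_0+\ell$; in particular its derivative with respect to $X_{k_0+\ell'-i}$ vanishes identically whenever $k_0+\ell'-i \geq k_0+\ell$, i.e.\ whenever $\ell' \geq \ell + i \geq \ell+1$. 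Hence, ordering the components by $\ell$ and the selected columns by $\ell'$, the submatrix $\bigl(\partial_{X_{k_0+\ell'-i}}(CB^m P_{k_0+\ell}(\cdot)\omega_0)\bigr|_{v'}\bigr)_{\ell,\ell'}$ is lower triangular (the $(\ell,\ell')$ entry vanishes for $\ell' > \ell$... \emph{wait}, we need strict: actually it vanishes for $\ell' \geq \ell+1$ only when $i\geq 1$ forces $k_0+\ell'-i \geq k_0+\ell$, which needs $\ell' - \ell \geq i$; for $i=1$ this is $\ell' > \ell$, giving strict lower-triangularity, and for $i \geq 2$ one must argue slightly more carefully, but the diagonal entries $CB^m Q_i^{k_0+\ell}(v)\omega_0$ are nonzero and, if needed, one selects the columns with a coarser spacing or invokes that the nonvanishing leading terms still force rank $N$ via a Vandermonde-type argument as in Corollary~\ref{C:CBPk}). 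I expect the main obstacle to be exactly this triangularity/rank bookkeeping when $i \geq 2$: one must be careful that the chosen distinguished columns genuinely stagger and that no cancellation among the nonzero diagonal entries occurs; the safe fix is to take $N$ blocks of size governed by $i$ or to re-run the Vandermonde argument of Corollary~\ref{C:CBPk} componentwise, which guarantees that among any $i$ consecutive values of $k$ the vectors $CB^m Q_i^k(v)\omega_0$ cannot all be killed, yielding a full-rank selection. The rest — smoothness of $\phi$, the identification of its domain with $\R^{k_0+N}$, and the fact that evaluating at $v'$ only uses coordinates $v_0,\dots,v_{k_0+N-1}$ — is routine.
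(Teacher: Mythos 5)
Your setup is right and you have correctly located the one point where the argument can break, but the proposal does not close that gap, and the fix you gesture at is not the one that works. Taking the index $i$ straight from the hypothesis, your distinguished entries sit at positions $(\ell,\,k_0+\ell-i)$, and in row $\ell$ the entries at columns $k_0+\ell-i+1,\dots,k_0+\ell-1$ are $CB^mQ_{i'}^{k_0+\ell}(v)\omega_0$ for $i'=i-1,\dots,1$, which have no reason to vanish. These columns are exactly the pivot columns of the next $i-1$ rows, so for $i\ge 2$ the selected submatrix is not triangular and the rank conclusion does not follow. Neither of your suggested repairs works: you cannot ``space out'' the rows (the components of $\phi$ are the consecutive $P_{k_0},\dots,P_{k_0+N-1}$, so only columns are at your disposal, and every column $\le k_0+\ell-1$ can carry a nonzero entry in row $\ell$), and the Vandermonde argument of Corollary~\ref{C:CBPk} tells you that \emph{some} $Q_{i}^{k}$ is nonzero, not that the offending off-pivot entries vanish.

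The missing idea is a minimality choice. Your hypothesis shows that the set of indices $i'\ge 1$ for which the sequence $\big(CB^{m}Q_{i'}^k(v)\omega_0\big)_{k\geq i'}$ is not identically zero is nonempty; take $i_0$ to be its \emph{minimum} (this is \eqref{E:first-non-zero-diagonal} in the paper). Corollary~\ref{C:CBPk} applied to $i_0$ gives $k_0$ with $CB^mQ_{i_0}^{k}(v)\omega_0\neq 0$ for all $k\ge k_0$, i.e.\ a nonzero entry in row $\ell$ at column $k_0+\ell-i_0$; and minimality of $i_0$ forces $\partial\big(CB^mP_{k}\omega_0\big)/\partial X_{\ell}(v)=0$ for every $\ell>k-i_0$, since such an $\ell$ equals $k-i'$ with $i'<i_0$. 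Combined with the fact that $P_k$ does not depend on $X_j$ for $j\ge k$, every entry strictly to the right of the pivot in each row vanishes, the Jacobian is in row echelon form with $N$ nonzero pivots in strictly increasing column positions, and the rank is $N$. Everything else in your write-up (the reduction $CB^{m}R_i^{i-1}(v)\omega_0=CB^{m+1}P_{i-1}(v)\omega_0/(i-1)!\neq 0$ and the invocation of Corollary~\ref{C:CBPk}) matches the paper's proof; only this minimization step is missing, and without it the proof is incomplete.
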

\begin{proof}
Assume that there exists $i \ge 1$ such that $CB^{m+1}P_{i-1}(v)\omega_0\neq 0$.
Since, according to Lemma \ref{lemma:polynomial},  $R_{i}^{i-1}=BP_{i-1}/(i-1)!$,
this is equivalent to $CB^{m} R_{i}^{i-1}(v) \omega_0 \neq 0$.
Thus, reasoning as in the proof of Corollary~\ref{C:CBPk},
the sequence $\big(CB^{m}Q_i^k(v)\omega_0\big)_{k\geq i}$ is not constant equal to zero.
Set
\begin{equation}\label{E:first-non-zero-diagonal}
i_0 = \min \left\{ i \in \N\setminus\{0\} : \big(CB^{m}Q_i^k(v)\omega_0\big)_{k\geq i} \not\equiv 0 \right\}.
\end{equation}
As a consequence of Corollary~\ref{C:CBPk}, there exists $k_0\in \N$  such that $C B^mQ_{i_0}^k(v) \omega_0\neq 0$ for all $k\geq k_0$, \ie
\[
\dfrac{\partial \left(CB^m P_{k}\omega_0\right)}{\partial X_{k-i_0}}(v_0,\dots,v_{k_0+N-1}) =
\dfrac{\partial \left(CB^m P_{k}\omega_0\right)}{\partial X_{k-i_0}}(v) \neq 0,
\qquad \forall k\geq k_0,
\]
and (by construction of $i_0$)
\[
\dfrac{\partial \left(CB^m P_{k}\omega_0\right)}{\partial X_{\ell}}(v_0,\dots,v_{k_0+N-1}) =
\dfrac{\partial \left(CB^m P_{k}\omega_0\right)}{\partial X_{\ell}}(v) = 0,   \qquad  \forall \ell>k-i_0.
\]
In other words,
\begin{equation}\label{E:differential-CB^mPk}
D\phi(v_0,\dots,v_{k_0+N-1}) =
\begin{pmatrix}
*		& \hdots	& *		& a_0(v)	& 0		&		& \hdots	&		& 0 \\
\vdots	&		&		& \ddots	& \ddots	& \ddots	&		&		& \vdots\\
*		& 		&  \hdots	&		& * 		& a_{N-1}(v) 	& 0		& \hdots	& 0\\
\end{pmatrix},
\end{equation}
with $a_i(v) = C B^mQ_{i_0}^{k_0+i}(v) \omega_0$.
The statement follows.
\end{proof}

\subsection{Observability away from the target and proof of Theorem~\ref{Thm:main}}
\label{Subsec:part2}

Using the results of the previous section, we are now able to prove our main Theorem~\ref{Thm:main}.
In this section, we assume that $0\notin\Kx$.
From now on $t \mapsto \left(\xhat(t), \eps(t), \Pric(t), \omega(t) \right)$, or simply $(\xhat, \eps, \Pric, \omega)$, denotes
the solution to \eqref{E:kalman_coupled} with initial condition $(\xhat_0, \eps_0, \Pric_0, \omega_0)$.

Let us introduce some new notation.
For any $k\in\N$, define the map $\GG^k$ by:
\fonction{\GG^k}{J^k(\R^n, \R) \times \Keps\times \KP}{J^k_0(\R, \R)}{ \Big( j^k\delta(\xhat_0), \epsilon_0, \Pric_0 \Big) }{j^k \big((\feed+\delta) \circ \xhat \big)(0).}
For any finite subset $\I\subset\N$ and any $m\in\N$,
set $\km = \max \I$ and
define the maps, $\FF_{\I}^m$ and $\HH^{m}_{\I}$ as follows:
\fonction{\FF_{\I}^m}{J^\km_0(\R, \R)\times \S^{n-1}}{\R^{\di}}{(v, \omega_0)}{ \big( CB^m P_{k}(v)\omega_0 \big)_{k\in\I},}
\[
\HH^{m}_{\I}
= \FF_{\I}^m \circ \left( \GG^{\km} \times \Id_{\S^{n-1}}\right).
\]
\begin{remark}\label{R:on-the-def-of-F}
Notice that for any $m, k_0 \in \N$ and any $N \in \N\setminus\{0\}$ such that $\I \subset \{k_0, \dots, k_0+N-1\}$, the map $\FF_{\I}^m$ satisfies
\[
\FF_{\I}^m
= \pi_{\I} \circ \FF_{\{k_0, \dots, k_0+N-1\}}^m,
\]
where  $\pi_{\I}: J^{k_0+N-1}_0(\R, \R) = \R^{k_0+N} \to \R^{|\I|}$ denotes the canonical projection onto the factors that correspond to indices in $\I$.
\end{remark}
Now we state the following proposition, which leads directly to Theorem~\ref{Thm:main}.
\begin{proposition}\label{P:main}
\startmodif
For all $m \in \{0, \dots, n-1\}$, define
\begin{equation*}
E_m
= \begin{cases}
\S^{n-1} &\mbox{if } m=0 \\
\left\{\omega_0\in \S^{n-1} \mid CB^i\omega_0 = 0,\quad ~\forall i \in\intset{0}{m-1}\right\} &\mbox{otherwise}.
\end{cases}
\end{equation*}
\stopmodif
Suppose $(C, A)$ and $(C, B)$ are observable pairs.
Then for every $m \in \intset{0}{n-1}$,
there exist $k\in\N$, a positive real number $\eta$ and a dense open subset $\OOO_m\subset \NNN(k, \Kx, \eta)$
such that for all $(\delta, \xhat_0, \epsilon_0, \Pric_0, \omega_0) \in \OOO_m \times \K\times E_m$
\[
\HH^m_{\intset{0}{k}}(j^{k}\delta(\xhat_0), \epsilon_0, \Pric_0, \omega_0) \neq 0.
\]
\end{proposition}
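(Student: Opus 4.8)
The plan is to prove Proposition~\ref{P:main} by downward induction on $m$, starting from $m = n-1$ and descending to $m=0$, using the chain of transversality arguments sketched in the introduction of Section~\ref{sec:proof}. The base case $m = n-1$ is the easy one: if $\omega_0 \in E_{n-1}$, then $CB^i\omega_0 = 0$ for $i \in \{0,\dots,n-2\}$, and since $(C,B)$ is observable the vectors $C, CB, \dots, CB^{n-1}$ span $(\R^n)^*$, forcing $CB^{n-1}\omega_0 \neq 0$. Then $\HH^{n-1}_{\{0,\dots,k\}}$ evaluated at order $k_0 = 0$ already gives $CB^{n-1}P_0(v)\omega_0 = CB^{n-1}\omega_0 \neq 0$ for \emph{every} $\delta$, so $\OOO_{n-1}$ can be taken to be all of $\NNN(k,\Kx,\eta)$ (for instance $k=0$ and any $\eta$).

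For the inductive step, I would assume the statement for $m+1$ and prove it for $m$. The key mechanism is the ``rank increasing property'' encapsulated in Corollary~\ref{C:rank_N}: if at a point $(v,\omega_0)$ one has $CB^{m+1}P_{i-1}(v)\omega_0 \neq 0$ for some $i\geq 1$, then the map $\phi = (CB^m P_{k_0}(\cdot)\omega_0, \dots, CB^m P_{k_0+N-1}(\cdot)\omega_0)$ has a rank-$N$ (hence surjective) differential at $v$, for any target dimension $N$. Here $N$ is chosen to exceed the dimension of the ambient space of $\K \times \S^{n-1}$. Concretely: for $\omega_0 \in E_m$, either $CB^m\omega_0 \neq 0$ (in which case $\HH^m$ at order $0$ is already nonzero for all $\delta$), or $CB^m\omega_0 = 0$, i.e.\ $\omega_0 \in E_{m+1}$; in the latter case the induction hypothesis for $m+1$ — applied to the \emph{unperturbed} feedback $\delta=0$, or rather to points of $\OOO_{m+1}$ — yields some $i$ with $CB^{m+1}P_{i-1}(v)\omega_0 \neq 0$, which is exactly the trigger for Corollary~\ref{C:rank_N}. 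Composing $\FF^m_{\I}$ with $\GG^{\km}\times\Id$ (whose differential in the $\delta$-variable is surjective onto the relevant jet coordinates, because perturbations $\delta$ supported away from $0$ and vanishing to high order can realize arbitrary jets at any $\xhat_0 \in \Kx$ — this uses $0\notin\Kx$), one gets that $\HH^m_{\I}$ is transversal to $\{0\}$, viewed as a map parametrized by $\delta$. The parametric transversality theorem (Thom / the version in \cite{GMP}) then produces a residual — and, after passing to a suitable $\NNN(k,\Kx,\eta)$ and using compactness of $\K\times E_m$ to upgrade residual to dense open — set $\OOO_m$ of perturbations $\delta$ for which $\HH^m_{\{0,\dots,k\}}(\cdot) \neq 0$ on all of $\K\times E_m$. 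Taking $\OOO_m$ inside $\OOO_{m+1}$ (intersection), and enlarging $k$ and shrinking $\eta$ as needed finitely many times, keeps everything consistent down to $m=0$.

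The main obstacle, and the step that needs the most care, is the passage from \emph{transversality of the parametrized map} to an \emph{open dense set of perturbations in the finite-jet space} $\NNN(k,\Kx,\eta)$. Several things must be checked simultaneously: (i) that finitely many derivatives suffice — i.e.\ that a single $k$ works uniformly over all $\omega_0 \in E_m$ and all $(\xhat_0,\eps_0,\Pric_0) \in \K$, which follows from Corollary~\ref{C:rank_N} giving a $k_0$ depending only on $(v,\omega_0)$ together with a compactness/semicontinuity argument; (ii) that the transversality conclusion, a priori giving a residual subset of the infinite-dimensional $\A$ in the Whitney topology, restricts to a dense open subset of the finite-dimensional-flavored neighborhood $\NNN(k,\Kx,\eta)$ — openness comes from the condition being ``$\neq 0$ on a compact set'' and density from a jet-transversality/perturbation argument; and (iii) that the smallness parameter $\eta$ can be kept fixed (or only shrunk finitely often) across the $n$ inductive steps. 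I would organize the write-up so that the genuinely new content of each inductive step is just the invocation of Corollary~\ref{C:rank_N} plus one application of the transversality theorem, and relegate the bookkeeping about $k$, $\eta$, and the $\GG^{\km}$-submersivity to clearly stated sublemmas.
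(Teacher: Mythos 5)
Your overall architecture matches the paper's: descending induction on $m$, the base case via observability of $(C,B)$, Corollary~\ref{C:rank_N} as the rank trigger, and a compactness-plus-transversality endgame. However, there is a genuine gap at the heart of the inductive step. You assert that the differential of $\GG^{\km}$ in the $\delta$-variable ``is surjective onto the relevant jet coordinates, because perturbations $\delta$ \ldots{} can realize arbitrary jets at any $\xhat_0\in\Kx$.'' This conflates two different jets. What Corollary~\ref{C:rank_N} requires is control of the \emph{time}-jet $(u^{(0)},\dots,u^{(\km)})$ of $u=(\lambda+\delta)\circ\xhat$ at $t=0$; what varying $\delta$ gives you directly is the \emph{space}-jet $j^{\km}\delta(\xhat_0)$. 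The map from the latter to the former is \emph{not} surjective in general: if $\xhat$ were flat at $t=0$ (all $\xhat^{(q)}(0)=0$ for $q\ge 1$), then changing any derivative of $\delta$ at $\xhat_0$ would alter only $u^{(0)}$ and none of the higher $u^{(j)}$, so the composed map $\HH^{m-1}_{\I}$ would have rank at most $1$ in the $\delta$-directions and the transversality argument would collapse. This is precisely why hypothesis~\ref{NFOT} is needed and why the paper assumes $0\notin\Kx$ here: together they produce a first non-vanishing derivative $\xhat^{(p)}(0)\neq 0$ and a coordinate $\ell$ with $\xhat_\ell^{(p)}\neq 0$.

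Even with \ref{NFOT}, the map from $j^{\km}\delta(\xhat_0)$ to the time-jet of $u$ is still not onto; by Fa\`a di Bruno the derivative $\bigl(\partial/\partial x_\ell\bigr)^{j}\delta(\xhat_0)$ enters $u^{(i)}$ first at order $i=jp$, with coefficient $n_j\bigl(\xhat_\ell^{(p)}\bigr)^{j}\neq 0$. The paper therefore does \emph{not} prove surjectivity of $D_\delta\GG^{\km}$; instead it selects the index set $\I=\{j_0+rp \mid r=0,\dots,N-1\}$ as an arithmetic progression of step $p$, so that the restriction $D_w\GG^{\km}$ to these directions is lower-triangular with non-zero pivots, and then composes this triangular structure with the echelon form \eqref{E:differential-CB^mPk} coming from Corollary~\ref{C:rank_N} (using Remark~\ref{R:on-the-def-of-F} to extract the rows indexed by $\I$). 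Only this combination yields the rank-$N$ property of $\HH^{m-1}_{\I}$ needed before invoking Goresky--MacPherson. Your write-up never invokes \ref{NFOT} and never chooses $\I$; without these ingredients the key rank computation is unsubstantiated, so the inductive step as you describe it does not go through.
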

\begin{proof}
The proof strongly relies on the results of Section~\ref{sec:prel} and on the Goresky-MacPherson transversality theorem (see \cite[Part I, Chapter 1]{GMP}).
We prove the proposition by finite descending induction on $m$.
%
Note that since the pair $(C, B)$ is observable, we have
\(
\emptyset = E_n \subset E_{n-1} \subset \cdots \subset E_1 \subsetneq E_0 =  \S^{n-1}.
\)

\medskip
For $m=n-1$, the result is immediate because, by observability of the pair $(C, B)$, $CB^{n-1}\omega_0 \neq 0$ for all $\omega_0 \in E_{n-1}$.
Hence,
for $k=0$ and any positive real number $\eta$, we have for all $(\delta, \xhat_0, \epsilon_0, \Pric_0, \omega_0) \in \NNN(k, \Kx, \eta) \times \K\times E_{n-1}$,
\[
\HH^{n-1}_{\{0\}}(j^{0}\delta(\xhat_0), \epsilon_0, \Pric_0, \omega_0) = CB^{n-1}\omega_0 \neq 0.
\]

\medskip
Now suppose $1 \le m \le n-1$.
Note that, by definition of $E_{m-1} \setminus E_{m}$,
\begin{equation}\label{E:on_E_m-1-E_m}
CB^{m-1}\omega_0 \neq 0,
\quad \forall \omega_0 \in E_{m-1} \setminus E_{m}.
\end{equation}

Assume
that we are given a $k\in\N$, a positive real number $\eta$ and a dense open subset $\OOO_m \subset \NNN(k, \Kx, \eta)$ such that
\begin{equation}\label{E:induction-hypothesis}
\HH^m_{\{0, \dots, k\}}(j^{k}\delta(\xhat_0), \epsilon_0, \Pric_0, \omega_0) \neq 0,
\qquad \forall (\delta, \xhat_0, \epsilon_0, \Pric_0, \omega_0) \in \OOO_m \times \K\times E_m.
\end{equation}

Choose $(\delta, \xhat_0, \epsilon_0, \Pric_0, \omega_0) \in \OOO_m \times \K\times E_m$ and put $u(t) = (\lambda+\delta)\big( \xhat(t) \big)$.
Equation~\eqref{E:induction-hypothesis} implies that
$CB^m P_{i}(u^{(0)}, \dots, u^{(k)})\omega_0\neq0$ for an integer $i \in \{0, \dots, k\}$,
so,
by Corollary~\ref{C:rank_N}
there exists $k_0\in\N$ such that, for any positive integer $k_1$,
the map $\FF_{\{k_0, \dots, k_0+k_1-1\}}^{m-1}$ has a rank $k_1$ differential at $(u^{(0)}, \dots, u^{(k_0+k_1-1)})$.

Let $i_0 \in \N$ be defined as in the proof of Corollary~\ref{C:rank_N}.
Let $p\in\N\setminus\{0\}$ be such that $\xhat^{(p)}\neq 0$ and $\xhat^{(q)} = 0$ for all $q<p$ (which exists by hypothesis~\ref{NFOT} and $0\notin\Kx$),
and choose $\ell \in \{1, \dots n\}$ so that $\xhat_\ell^{(p)}\neq 0$.
Put
\[
\kdep
= \min\big\{ j\geq k_0 \mid j - i_0  \equiv 0 \pmod p \big\}
\footnote{Index $j_0$ corresponds to the smallest index $j \ge k_0$ such that $\xhat^{(p)}_\ell$ appears in $u^{(j-i_0)}$.}
\quad\text{and}\quad
\I = \big\{ \kdep + rp \mid r\in\intset{0}{N-1} \big\},
\]
where $N$ is a positive integer.
The (partial) differential of $\GG^{m}_{\I}$ with respect to
\[
w
= \left.\left( \delta, \parfrac{}{x_\ell}\delta, \dots, \left(\parfrac{}{x_\ell}\right)^{\km}\delta \right)\right|_{x=\xhat_0}
\]
at $X_0 = (j^{\km}\delta(\xhat_0), \epsilon_0, \Pric_0, \omega_0)$ is
the submatrix $D_w\GG^{m}_{\I}(X_0)$ obtained from $D\GG^{m}_{\I}(X_0)$ by deleting all columns that do not correspond to partial derivatives with respect to $w$.
In other words,
\[
D_w\GG^{m}_{\I}(X_0)
=\begin{pmatrix}
\mathtt{col}({0}) & \cdots & \mathtt{col}({\km-1})
\end{pmatrix}.
\]
Each column $\mathtt{col}({i})$, $i \in \{0, \dots, \km-1\}$ of $D_w\GG^{m}_{\I}(X_0)$ satisfies
\begin{align*}
\mathtt{col}({i})^* =
\begin{pmatrix}
0	&\cdots	&0	&b_i(X_0)	&*	&\cdots *
\end{pmatrix}^*,\qquad b_i(X_0) \neq 0,
\end{align*}
where the non zero coefficient $b_i(X_0)$ appears at the $ip\,$th row.
According to Fa{\`a} di Bruno formula, we have
\[
b_i(X_0) = n_i\left(\xhat^{(p)}_\ell \right)^i,
\]
$n_i$ being a positive integer for each $i \in \{0, \dots, \km-1\}$.

It is clear from the definition of $\FF_{\I}^m$ and Remark \ref{R:on-the-def-of-F} thereafter that $D\FF_{\I}^m$ is the submatrix of $D\FF_{\{k_0, \dots, \km\}}^m$
(see equation~\eqref{E:differential-CB^mPk})
obtained by keeping the $i\,$th rows for $i \in \I$.
Therefore,
\begin{align}
\rank \left( D\HH^{m}_{\I}(X_0) \right)
&\ge \rank \left( D_v\FF_{\I}^m\left(\GG^{\km}(X_0), \omega_0\right) \circ D_w \GG^{\km}(X_0) \right)
\nonumber\\
&= \rank \begin{pmatrix}
*		& \cdots	& *		& c_0(X_0)	& 0		&				& \cdots	&		& 0		\\
\vdots	&		&		& \ddots			& \ddots	& \ddots			&		&		& \vdots	\\
*		& 		&  \cdots	&				& * 		& c_{N-1}(X_0)	& 0		& \cdots	& 0		\\
\end{pmatrix},
\nonumber
\end{align}
where
\(
c_r(X_0) = a_{j_0+rp}\left(\GG^{\km}(X_0), \omega_0\right) b_{j_0+rp}(X_0),
\)
$r \in \{0, \dots, N-1\}$.
Hence $\HH_\I^{m-1}$ has a rank $N$ differential at $X_0$.

\bigskip
For any $k \in \N$, any compact subset $K \subset \R^n$ and any $\eta>0$, $k \in \N$,
define
\[
\mathcal{M}(k, K, \eta) = \left\{ \alpha \in J^k(\R^n, \R) : \exists f \in \NNN(k, K, \eta), \quad \exists a \in K, \quad \alpha = j^k f(a) \right\}.
\]
Clearly, $\mathcal{M}(k, K, \eta)$ is an open submanifold of $J^k(\R^n, \R)$.

Since the rank is a semi-continuous map, there exists a neighborhood
\(
\V \subset \mathcal{M}(\km, \Kx, \eta) \times \Keps \times \KP \times E_m
\)
of $(j^{\km}_0(\xhat_0), \epsilon_0, \Pric_0, \omega_0)$ such that $\HH_\I^{m-1}$ has a rank $N$ on $\V$.
Let $\rho \in (0, \eta)$ and $\C(\rho) = \Cx \times \Ceps \times \CP \times \Omega_m$ be a semi-algebraic compact subset of $\K\times E_m$ such that
\[
W := \mathcal{M}(\km, \Kx, \rho) \times \Ceps \times \CP \times \Omega_m \subset \V.
\]
Let $B = \big(\HH_\I^{m-1}|_W \big)^{-1}(0)$ and $Z = \pi(B)$, where $\pi$ is the projection that is parallel to $\Ceps \times \CP \times \Omega_m$.
Then,
and because $\Ceps \times \CP \times \Omega_m$ is compact, $Z\subset \mathcal{M}(\km, \Kx, \rho)$ is a closed semi-algebraic subset.
Hence,
according to the Goresky-McPherson transversality theorem (\cite[Part I, Chapter 1, page 38, Proposition]{GMP}), the set
\[
\Gt(\rho)
= \left\{ f \in C^\infty\big(\R^n, \mathcal{M}(\km, \Kx, \rho)\big) \mid
    f|_{\Cx} \text{~is transversal to~} Z \right\}
\]
is open and dense (in the Whitney $C^\infty$ topology) in $C^\infty\big(\R^n, \mathcal{M}(\km, \Kx, \rho)\big)$.
Moreover,
since $\HH_\I^{m-1}|_W$ is a submersion, we have
$\codim_{\mathcal{M}(\km, \Kx, \rho)} Z \geq \codim_{\R^N}\{0\} - \dim (\C(\rho)\times E_m) = N - \dim (\C(\rho)\times E_m)$. 
Picking $N$ sufficiently large, we have
\[
\codim_{\mathcal{M}(\km, \Kx, \rho)} Z > n
\]
in which case, transversal necessarily means to avoid.
It follows that
\begin{align*}
\Gt(\rho)
&= \left\{ f \in C^\infty\big(\R^n, \mathcal{M}(\km, \Kx, \rho)\big) \mid
    \forall \xhat\in\Cx, f(\xhat) \notin Z \right\}
\\
&= \left\{ f \in C^\infty\big(\R^n, \mathcal{M}(\km, \Kx, \rho)\big) \mid
    \forall (\xhat, \eps, \Pric, \omega)\in \C(\rho),
\big( f(\xhat), \eps, \Pric, \omega \big) \notin B \right\}
\\
&= \left\{ f \in C^\infty\big(\R^n, \mathcal{M}(\km, \Kx, \rho)\big) \mid
    \forall (\xhat, \eps, \Pric, \omega)\in \C(\rho),
\HH_\I^{m-1}\big( f(\xhat), \eps, \Pric, \omega \big) \neq 0 \right\}.
\end{align*}
By compactness of $\K\times E_m$,
there exists $q \in \N$ such that
\begin{equation}\label{E:compacite}
\K\times E_m
= \bigcup_{i=1}^{q} \C(\rho_i).
\end{equation}
Set
\(
\eta
= \min \{\rho_i \mid i=1, \dots, q\}>0,
\)
\(
k = \max \{\km(\rho_i) \mid i=1, \dots, q\}
\)
and define
\(
\Gt
= \bigcap_{i=1}^q \Gt(\rho_i).
\)
According to \eqref{E:compacite},
\begin{multline*}
\Gt
= \Big\{ f \in C^\infty \big(\R^n, \mathcal{M}(k, \Kx, \eta) \big) \mid 
\forall (\xhat, \eps, \Pric, \omega)\in \K\times E_m,\\
\HH_{\{0, \dots, k\}}^{m-1} \big(f(\xhat), \eps, \Pric, \omega \big) \neq 0 \Big\}.
\end{multline*}
Also,
by definition of $E_{m-1}$ and $E_m$,
$\HH_{\{0\}}^{m-1}(\omega) = CB^{m-1}\omega \neq 0$ for all $\omega\in E_{m-1} \setminus E_m$.
Thus,
\begin{multline*}
\Gt
= \Big\{ f \in C^\infty \big(\R^n, \mathcal{M}(k, \Kx, \eta) \big) \mid \forall (\xhat, \eps, \Pric, \omega)\in \K\times E_{m-1},\\
\HH_{\{0, \dots, k\}}^{m-1} \big(f(\xhat), \eps, \Pric, \omega \big) \neq 0 \Big\}
\end{multline*}
is an open dense subset of $C^\infty(\R^n, \mathcal{M}(k, \Kx, \eta))$.
Then $\OOO_{m-1} := \{
\tau\circ f\mid f\in \Gt\}$ where $\tau$ is the target map is an open dense subset
of $ \NNN(k, \Kx, \eta)$ and
\begin{multline*}
\OOO_{m-1}
= \Big\{ \delta \in \NNN(k, \Kx, \eta) \mid \forall (\xhat_0, \eps_0, \Pric_0, \omega_0)\in \K\times E_{m-1},\\
\HH_{\{0, \dots, k\}}^{m-1}(j^{k}\delta(\xhat_0), \epsilon_0, \Pric_0, \omega_0) \neq 0 \Big\}.
\end{multline*}
It concludes the induction and the proof.
\end{proof}

\begin{proof}[Proof of Theorem~\ref{Thm:main}]
Applying Proposition~\ref{P:main} to $m=0$ and recalling the definition of $H^0_{\{0,\dots,k\}}$, we immediately get the main Theorem~\ref{Thm:main}.
\end{proof}
A straightforward consequence of Theorem~\ref{Thm:main} is the following corollary, that deals with the observability of \eqref{E:observation_system}, as announced in Remark \ref{rk:obs1}.

\begin{corollary}\label{obs}
Assume that $(C, A)$ and $(C, B)$ are observable pairs.
\startmodif
Assume that $0\notin \Kx$.
\stopmodif
Then there exist $\eta>0$, $k\in\N$ and an open dense subset $\OOO \subset \NNN(k, \Kx, \eta)$ such that for all $(\delta, \xhat_0, \eps_0, \Pric_0)\in \OOO \times \K$, system~\eqref{E:observation_system} is observable in any time $T>0$ for the control $u = (\feed + \delta) \circ \xhat$,
where $\xhat$ follows \eqref{E:kalman_coupled} with initial conditions $(\xhat_0, \eps_0, \Pric_0)$ and feedback perturbation $\delta$.
\end{corollary}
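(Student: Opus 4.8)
The plan is to read off Corollary~\ref{obs} as an essentially immediate consequence of Theorem~\ref{Thm:main} together with the observability characterization of Proposition~\ref{prop:obs}; the analytic substance is already contained in Theorem~\ref{Thm:main}, so the work here is only one of translation.

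First I would fix the $\eta>0$, the integer $k\in\N$ and the dense open subset $\OOO\subset\NNN(k,\Kx,\eta)$ furnished by Theorem~\ref{Thm:main}. Take any $\delta\in\OOO$ and any $(\xhat_0,\eps_0,\Pric_0)\in\K$, let $(\xhat,\eps,\Pric)$ be the corresponding solution of~\eqref{E:kalman_coupled}, and set $u(t)=(\feed+\delta)(\xhat(t))$. Now let $\omega_0\in\S^{n-1}$ be arbitrary and let $\omega$ solve $\dot\omega=\Au{u(t)}\omega$ with $\omega(0)=\omega_0$; then $(\xhat,\eps,\Pric,\omega)$ is precisely the solution of~\eqref{E:kalman_coupled} with initial datum $(\xhat_0,\eps_0,\Pric_0,\omega_0)\in\K\times\S^{n-1}$, so Theorem~\ref{Thm:main} applies and produces $k_0\in\{0,\dots,k\}$ with $\left.\frac{\diff^{k_0}}{\diff t^{k_0}}\right|_{t=0}C\omega(t)\neq0$.

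The remaining step is to deduce $C\omega|_{[0,T]}\not\equiv0$ for every $T>0$: since $t\mapsto C\omega(t)$ is $C^\infty$ on $\R_+$ (cf.~hypothesis~\ref{FC}), if it were to vanish identically on some $[0,T]$ then all of its one-sided derivatives at $t=0$ would vanish, contradicting the existence of $k_0$. As $\omega_0\in\S^{n-1}$ was arbitrary, Proposition~\ref{prop:obs} then yields that system~\eqref{E:observation_system} is observable in time $T$ for the control $u=(\feed+\delta)\circ\xhat$; and $T>0$ being arbitrary as well, this holds in every time $T>0$, with $\eta$, $k$ and $\OOO$ independent of $T$. The only point requiring any care — and it is not a genuine obstacle — is the bookkeeping that identifies the $\omega$-component of~\eqref{E:kalman_coupled} with the auxiliary linear system in Proposition~\ref{prop:obs}, together with the observation that the conclusion of Theorem~\ref{Thm:main} is uniform over initial conditions in $\K\times\S^{n-1}$ and hence transfers verbatim to a statement quantified over all $\omega_0\in\S^{n-1}$.
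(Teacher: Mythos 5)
Your proposal is correct and matches the paper's own argument: the paper likewise obtains $\eta$, $k$ and $\OOO$ from Proposition~\ref{P:main} at $m=0$ (which is exactly the content of Theorem~\ref{Thm:main}), notes that some derivative of $C\omega$ at $t=0$ is nonzero for every $(\xhat_0,\eps_0,\Pric_0,\omega_0)\in\K\times\S^{n-1}$, and concludes $C\omega|_{[0,T]}\not\equiv0$, hence observability via Proposition~\ref{prop:obs}. The only difference is presentational: you spell out the identification of the $\omega$-component of~\eqref{E:kalman_coupled} with the auxiliary system of Proposition~\ref{prop:obs}, which the paper leaves implicit.
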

\begin{proof}
Applying Proposition~\ref{P:main} to $m = 0$, we find that there exist $\eta>0$, $k\in\N$ and an open dense subset $\OOO \subset \NNN(k, \Kx, \eta)$ such that for all $(\delta, \xhat_0, \epsilon_0, \Pric_0, \omega_0) \in \OOO\times \K\times E_0$, $\HH^0_{\{0, \dots, k\}}(j^{k}\delta(\xhat_0), \epsilon_0, \Pric_0, \omega_0) \neq 0$.
Let $(\delta, \xhat_0, \epsilon_0, \Pric_0, \omega_0) \in \OOO\times \K \times \S^{n-1}$,
and let $(\xhat, \epsilon, \Pric, \omega)$ denote the solution of \eqref{E:kalman_coupled} with initial conditions $(\xhat_0, \epsilon_0, \Pric_0, \omega_0)$.
From the definition of $\HH^0_{\{0, \dots, k\}}$ it follows that there exists $i\in\N$ such that $C\omega^{(i)}(0) \neq 0$.
Consequently,
$C\omega|_{[0, T]} \not\equiv 0$, which was to be proved.
\end{proof}

As stated in Remark \ref{rk:obs1}, we now want to complete the compact $\Kx$ with a neighborhood of zero as in Corollary~\ref{Cor:main}.
We do so in the following section.

\subsection{Observability near the target and proof of Corollary~\ref{Cor:main}}
\label{sec:target}

We use Theorem~\ref{Thm:main} to prove Corollary~\ref{Cor:main}. In order to do so, we need the following notations and lemmas.
For any control $u\in C^\infty(\R_+,\R)$, let $\Phi_u:\R_+\to \End (\R^n)$ be the flow of the time-varying linear ordinary differential equation~\eqref{E:omega}.
So $\Phi_u(t)\omega_0$ is the solution of \eqref{E:omega} at time $t\in\R_+$ with initial condition $\omega_0\in\R^n$.
Notice for instance that $\Phi_0(t)=\e^{At}$.
Recall that
an input $u\in C^\infty(\R_+, \R)$ is said to make system~\eqref{E:observation_system} observable in time $T>0$ if for all $\omega_0\in \S^{n-1}$ there exists $t\in[0, T]$ such that $C\Phi_u(t)\omega_0\neq 0$.

\begin{lemma}\label{L:lemma1}
Let $T>0$, 
$\eta_0 = \max\{\left|
C\Phi_0(t)\omega_0
\right|\mid t\in [0,T],~\omega_0\in \S^{n-1} \}$
and $u\in C^\infty(\R_+, \R)$. If
\begin{equation}\label{E:hypo-lem}
\forall t\in [0,T],
\forall \omega_0\in \S^{n-1},\quad
\left|
C\Phi_u(t) \omega_0-C\Phi_0(t)\omega_0
\right|
<
\eta_0,
\end{equation}
then $u$ makes system~\eqref{E:observation_system} observable in time $T$.
\end{lemma}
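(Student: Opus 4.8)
The plan is a proof by contradiction. Suppose that hypothesis~\eqref{E:hypo-lem} holds and yet $u$ fails to make system~\eqref{E:observation_system} observable in time $T$. By Proposition~\ref{prop:obs}, this failure produces a unit direction $\omega_0\in\S^{n-1}$ whose output vanishes identically, that is, the solution of $\dot\omega=\Au{u(t)}\omega$ issued from $\omega_0$ satisfies $C\Phi_u(t)\omega_0=0$ for every $t\in[0,T]$. Isolating this invisible direction is the decisive first move: it turns the perturbation quantity controlled by \eqref{E:hypo-lem} into the \emph{free} output itself.

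Indeed, for this $\omega_0$ and every $t\in[0,T]$, the vanishing of $C\Phi_u(t)\omega_0$ gives
\[
\abs{C\Phi_0(t)\omega_0}=\abs{C\Phi_u(t)\omega_0-C\Phi_0(t)\omega_0}<\eta_0,
\]
the strict inequality being exactly \eqref{E:hypo-lem} evaluated along $\omega_0$. Next I would pass to the supremum over the time interval: since $t\mapsto\abs{C\Phi_0(t)\omega_0}$ is continuous on the compact set $[0,T]$ it attains its maximum, so the strict bound is preserved and
\[
\max_{t\in[0,T]}\abs{C\Phi_0(t)\omega_0}<\eta_0.
\]

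The last step is to confront this with the definition $\eta_0=\max\{\abs{C\Phi_0(t)\omega_0}\mid t\in[0,T],\ \omega_0\in\S^{n-1}\}$, and this is where I expect the difficulty to concentrate. The maximum defining $\eta_0$ is attained because $(t,\omega_0)\mapsto C\Phi_0(t)\omega_0$ is continuous on the compact product $[0,T]\times\S^{n-1}$, and it is positive because $(C,A)$ is observable. The delicate point, however, is that the invisible direction $\omega_0$ delivered by Proposition~\ref{prop:obs} is not \emph{a priori} one that realizes this maximum; the displayed inequality only says that this single direction falls short of the global maximum, which is not by itself contradictory. Closing the argument therefore hinges on showing that the observation maximum $\max_{t}\abs{C\Phi_0(t)\omega_0}$ of this particular direction is nonetheless bounded below by $\eta_0$, so as to clash with the strict inequality above; the natural lever is again observability of $(C,A)$ together with compactness of $\S^{n-1}$, and securing this lower bound for the offending direction is the crux, whereas the extraction of $\omega_0$ and the continuity/compactness bookkeeping are routine.
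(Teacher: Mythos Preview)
Your diagnosis of the gap is correct, and the paper's own proof suffers from the very same defect. The paper argues directly: it selects a pair $(t,\omega_0)$ realizing $|C\Phi_0(t)\omega_0|=\eta_0$ and uses the reverse triangle inequality to obtain $|C\Phi_u(t)\omega_0|>0$. But this handles only that single direction, whereas observability requires \emph{every} $\omega_0\in\S^{n-1}$ to have a time with nonzero output. Your contradiction route and the paper's direct route are essentially contrapositives of one another and stall at the same point.

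The obstruction lies in the statement, not in your strategy. With $\eta_0=\max_{(t,\omega_0)}|C\Phi_0(t)\omega_0|$, the lower bound $\max_t|C\Phi_0(t)\omega_0|\ge\eta_0$ you seek for the offending direction cannot be obtained in general: $\eta_0$ dominates every such slice by definition, and one can construct inputs $u$ for which some direction with small free output becomes invisible while hypothesis~\eqref{E:hypo-lem} still holds. The lemma and both arguments become correct if one instead reads
\[
\eta_0=\min_{\omega_0\in\S^{n-1}}\ \max_{t\in[0,T]}\,|C\Phi_0(t)\omega_0|,
\]
which is positive precisely because $(C,A)$ is observable---this is the justification explicitly invoked in Proposition~\ref{Prop_zero}, and it would be superfluous for a bare maximum. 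With that amendment your contradiction closes in one line, since then $\max_t|C\Phi_0(t)\omega_0|\ge\eta_0$ for every $\omega_0$; the paper's direct argument likewise works once $\omega_0$ is taken arbitrary and $t$ is chosen to maximize $|C\Phi_0(t)\omega_0|$.
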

\begin{proof}
Let $t\in[0, T]$ and $\omega_0\in \S^{n-1}$ be such that $|C\Phi_0(t)\omega_0| = \eta_0$.
Using \eqref{E:hypo-lem}, we get
\begin{align*}
\left|
C\Phi_u(t) \omega_0
\right|
&\geq
\left|
C\Phi_0(t) \omega_0
\right|
-
\left|
C\Phi_u(t) \omega_0-C\Phi_0(t) \omega_0
\right|
> 0,
\end{align*}
which shows that $u$ makes system~\eqref{E:observation_system} observable in time $T$.
\end{proof}

\begin{lemma}\label{L:lemma2}
Let $T>0$.
Let $M=\sup\{\|\Phi_0(t)\|\mid t\in [0,T]\}$.
Let $u\in C^\infty(\R_+, \R)$ and let $u_M=\sup\{|u(t)|\mid t\in [0,T]\}$. Then there exists a constant $K>0$ such that for all $t\in [0,T]$ and all $\omega_0\in \S^{n-1}$,
\begin{equation}\label{E:estimation}
\left|
\Phi_u(t)\omega_0 -\Phi_0(t)\omega_0
\right|
<
MKu_M\e^{K u_M}.
\end{equation}
\end{lemma}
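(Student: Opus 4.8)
The plan is to estimate the difference of the two flows via a Gr\"onwall-type argument applied to the integral equation satisfied by $\Phi_u(t)\omega_0 - \Phi_0(t)\omega_0$. First, I would write $\omega(t) = \Phi_u(t)\omega_0$ and $\omega_*(t) = \Phi_0(t)\omega_0$, so that $\dot\omega = (A+u(t)B)\omega$ and $\dot\omega_* = A\omega_*$, with the same initial condition $\omega(0)=\omega_*(0)=\omega_0$. Subtracting and integrating from $0$ to $t$ gives
\[
\omega(t) - \omega_*(t) = \int_0^t A\big(\omega(s)-\omega_*(s)\big)\,\diff s + \int_0^t u(s) B\,\omega(s)\,\diff s.
\]
Setting $\phi(t) = |\omega(t)-\omega_*(t)|$, using $|\omega(s)| = |\Phi_0(s)\omega_0| \le M$ (wait — that bound is for $\Phi_0$, not $\Phi_u$, so instead I would bound $|\omega(s)|$ directly), I would take norms to obtain
\[
\phi(t) \le \|A\|\int_0^t \phi(s)\,\diff s + u_M\|B\|\int_0^t |\omega(s)|\,\diff s.
\]

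The forcing term involves $|\omega(s)|$, so I would first get an a priori bound on $\omega$ itself: from $|\omega(t)| \le 1 + \int_0^t(\|A\| + u_M\|B\|)|\omega(s)|\,\diff s$, Gr\"onwall's inequality yields $|\omega(t)| \le \e^{(\|A\|+u_M\|B\|)T}$ on $[0,T]$. Plugging this into the estimate for $\phi$ and applying Gr\"onwall once more gives an exponential bound of the form $\phi(t) \le C_1 u_M \e^{C_2 u_M}$ for suitable constants depending on $\|A\|$, $\|B\|$ and $T$. The last step is cosmetic: the lemma states the bound in the specific packaged form $MK u_M \e^{K u_M}$, so I would simply choose $K$ large enough (depending on $\|A\|, \|B\|, T$ and $M$) to absorb all the constants produced by the two Gr\"onwall applications, and note that $M \ge 1$ since $\Phi_0(0) = \Id$, which lets the prefactor $M$ soak up a constant if needed. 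The strict inequality can be arranged by enlarging $K$ slightly, or is automatic when $u_M>0$; if $u_M = 0$ the left-hand side vanishes and the inequality is trivial.

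There is no real obstacle here — this is a standard linear ODE perturbation estimate. The only mild subtlety is making sure the constant $K$ is chosen \emph{after} both Gr\"onwall steps so that it simultaneously controls the a priori bound on $|\omega|$ and the final comparison bound; bookkeeping the dependence of $K$ on $T$, $\|A\|$, $\|B\|$ (and implicitly $M$, which itself depends on $T$) is the one place to be careful, but it is routine.
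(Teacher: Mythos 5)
Your proof is correct, but it takes a genuinely different route from the paper. You run a standard two-step Gr\"onwall argument: first an a priori bound $|\Phi_u(t)\omega_0|\le \e^{(\|A\|+u_M\|B\|)T}$, then Gr\"onwall on the difference, yielding $|\Phi_u(t)\omega_0-\Phi_0(t)\omega_0|\le u_M\|B\|T\,\e^{2\|A\|T}\e^{\|B\|Tu_M}$, after which you absorb constants into $K$ (any $K\ge\max\{\|B\|T,\ \|B\|T\e^{2\|A\|T}/M\}$ works, and such a $K$ depends only on $A$, $B$, $T$, $M$ --- crucially not on $u$, which is what Proposition~\ref{Prop_zero} needs). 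The paper instead writes the variation-of-constants identity $\Phi_u(t)\omega_0-\Phi_0(t)\omega_0=\int_0^t\Phi_0(t-s)Bu(s)\Phi_u(s)\,\diff s\,\omega_0$ and expands it as a Peano--Baker/Dyson series of iterated integrals $\sum_k J_k$, bounding $\|J_k\|\le M^{k+2}\|B\|^{k+1}u_M^{k+1}T^{k+1}/(k+1)!$ and summing to get exactly the packaged form with the explicit constant $K=M\|B\|T$; the prefactor $M$ falls out of the series rather than having to be engineered. Your approach is more elementary and avoids justifying convergence of the series, at the price of a less explicit $K$; both are fully rigorous. Two small points of hygiene: the strict inequality degenerates to $0<0$ when $u_M=0$ (the flows then coincide on $[0,T]$), but this defect is present in the lemma as stated and in the paper's own proof, and is harmless for the application; and you correctly caught mid-sentence that $M$ bounds $\|\Phi_0\|$, not $\|\Phi_u\|$, which is precisely why the separate a priori bound on $|\omega|$ is needed.
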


\begin{proof}
By the variation of constants formula, for all $t\in [0,T]$ and all $\omega_0\in\S^{n-1}$,
$$
\Phi_u(t)\omega_0 -\Phi_0(t)\omega_0
=
\int_0^t\Phi_0(t-s)Bu(s)\Phi_u(s)\dif s
\,
\omega_0.
$$
Iterating integrals, we get a (formal) series expansion
\begin{equation}\label{E:interated_int}
\int_0^{s_0}\Phi_0(s_0-s_1)Bu(s)\Phi_u(s)\dif s_1
=
\sum_{k=0}^{+\infty} J_k
\end{equation}
where 
$$
J_k
=
\int_{0}^{s_0}
\dotsi
\int_{0}^{s_k}
\Psi_k(s_0,\dots,s_{k+1})
\Phi_0(s_{k+1})
u(s_0)
\dotsi
u(s_{k+1})
\dif s_1
\dotsm
\dif s_{k+1}
$$
with
$\Psi_k(s_0,\dots,s_{k+1})=\Phi_0(s_0-s_1)
B
\dotsm
\Phi_0(s_{k}-s_{k+1})
B$.

Then
$
\|\Psi_k(s_0,\dots,s_{k+1})\|\leq M^{k+1} \|B\|^{k+1}
$
and
$$
\|J_k\|\leq 
M^{k+2} \|B\|^{k+1} u_M^{k+1}\int_{0}^{s_0}
\dotsi
\int_{0}^{s_k}
\dif s_1
\dotsm
\dif s_{k+1}
\leq 
M^{k+2} \|B\|^{k+1} u_M^{k+1}
\frac{T^{k+1}}{(k+1)!}.
$$
Thus
$$
\begin{aligned}
\sum_{k=0}^{+\infty} \|J_k\|
&\leq 
\sum_{k=0}^{+\infty}
M^{k+2} \|B\|^{k+1} u_M^{k+1}
\frac{T^{k+1}}{(k+1)!}
\\
&\leq
M^{2} \|B\| u_M
T
\sum_{k=0}^{+\infty}
M^{k} \|B\|^{k} u_M^{k}
\frac{T^{k}}{k!}
\end{aligned}
$$
which proves the convergence of the series expansion~\eqref{E:interated_int} and inequality \eqref{E:estimation} with
\(
K = M\|B\|T.
\)
\end{proof}

\begin{proposition}\label{Prop_zero}
Assume that the pair $(C,A)$ is observable.
Assume that $0$ is in the interior of $\Kx$.
Let $T>0$.
Then there exists $\RRR>0$ such that $B(0,\RRR)\subset\Kx$ and $\dmax>0$ such that the following property holds:

Let $(\xhat,\varepsilon,\Pric,\omega)$ be the solution of \eqref{E:kalman_coupled}
with initial condition  $(\xhat_0,\varepsilon_0,\Pric_0,\omega_0)\in B(0,\RRR)\times \R^n\times\sym\times \S^{n-1}$.
Let $\delta\in\A$ such that $\delta(0)=0$ and $\sup\{|\delta(x)|\mid x\in\Kx\}<\dmax$.
If $\xhat(t)\in B(0,\RRR)$ for all $t\in [0,T]$, then the control $u:t\mapsto (\lambda+\delta)(\xhat(t))$ makes system~\eqref{E:observation_system} observable in time $T$.
\end{proposition}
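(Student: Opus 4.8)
The plan is to exploit continuity with respect to the control near $u \equiv 0$, using the quantitative estimate of Lemma~\ref{L:lemma2} together with the observability criterion of Lemma~\ref{L:lemma1}, and to control the size of the relevant control using that $\lambda(0)=0$, $\delta(0)=0$, and that the trajectory $\xhat$ stays in a small ball. First I would fix $T>0$ and let $\eta_0 = \max\{\abs{C\Phi_0(t)\omega_0}\mid t\in[0,T],~\omega_0\in\S^{n-1}\}$ and $M = \sup\{\norm{\Phi_0(t)}\mid t\in[0,T]\}$ as in Lemmas~\ref{L:lemma1} and \ref{L:lemma2}; note $\eta_0>0$ since $(C,A)$ is observable. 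By Lemma~\ref{L:lemma1}, it suffices to produce $\RRR>0$ (with $B(0,\RRR)\subset\Kx$, which is possible since $0$ is interior to $\Kx$) and $\dmax>0$ so that whenever $\delta(0)=0$, $\sup_{\Kx}\abs{\delta}<\dmax$, and $\xhat(t)\in B(0,\RRR)$ for all $t\in[0,T]$, the control $u:t\mapsto(\lambda+\delta)(\xhat(t))$ satisfies $u_M := \sup_{[0,T]}\abs{u(t)} < \eta_0/(MKu_M\e^{Ku_M})$ — more precisely, so that the right-hand side $MKu_M\e^{Ku_M}$ of \eqref{E:estimation}, with $K = M\norm{B}T$, is strictly less than $\eta_0$.

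The key observation making this work is that $u_M$ can be made as small as we like by shrinking $\RRR$ and $\dmax$. Indeed, $\lambda$ is continuous with $\lambda(0)=0$, so there is $\RRR_1>0$ with $\abs{\lambda(x)}<\eta_0/(3M\norm{B}T)$ for $\abs{x}<\RRR_1$ (I insert the factor so that, after plugging into $MKu_M\e^{Ku_M}$, the product is dominated by $\eta_0$ for $u_M$ small; any explicit small bound works). Choosing then $\dmax$ small enough that $\abs{\delta(x)} < \eta_0/(3M\norm{B}T)$ for all $x\in\Kx$ whenever $\sup_{\Kx}\abs{\delta}<\dmax$ (this is immediate from the definition), we get for $\xhat(t)\in B(0,\min\{\RRR_1,\RRR\})$ that $u_M \le \sup_{B(0,\RRR_1)}\abs{\lambda} + \dmax \le 2\eta_0/(3M\norm{B}T)$, which can be further forced to be as small as needed; in particular we may ensure $K u_M = M\norm{B}T\, u_M$ is bounded by, say, $\log 2$ and $M K u_M \e^{K u_M} < \eta_0$. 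Set $\RRR = \min\{\RRR_1,\RRR_2\}$ where $\RRR_2$ is chosen to guarantee $B(0,\RRR_2)\subset\Kx$, and shrink $\RRR,\dmax$ once more if necessary to absorb the constants; record the resulting $\RRR$ and $\dmax=\dmax$.

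With these choices, assume $\xhat(t)\in B(0,\RRR)$ for all $t\in[0,T]$. Then $u_M<\eta_0/(MK\e^{Ku_M})$ as arranged, so Lemma~\ref{L:lemma2} gives $\abs{\Phi_u(t)\omega_0 - \Phi_0(t)\omega_0} < MKu_M\e^{Ku_M} < \eta_0$ for all $t\in[0,T]$ and all $\omega_0\in\S^{n-1}$; composing with the fixed linear map $C$ and noting $\norm{C}$ can be folded into the constant (or simply normalizing so that $\abs{C}\le 1$, or redefining $\eta_0$ and the bounds with the factor $\abs{C}$ throughout), we obtain \eqref{E:hypo-lem}. Lemma~\ref{L:lemma1} then yields that $u$ makes \eqref{E:observation_system} observable in time $T$, which is the claim. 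The only mild subtlety — and the step I expect to require the most care in writing — is bookkeeping the constants: $M$, $K=M\norm{B}T$, $\eta_0$, and $\abs{C}$ all appear, and one must pick $\RRR$ and $\dmax$ in the right order (first $\RRR_1$ from continuity of $\lambda$, then $\dmax$, then tighten both) so that the single transcendental inequality $MKu_M\e^{Ku_M}<\eta_0$ holds; there is no real analytic obstacle, since each of these quantities depends monotonically and continuously on the small parameters.
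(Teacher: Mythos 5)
Your proposal is correct and follows essentially the same route as the paper's proof: combine the perturbation estimate of Lemma~\ref{L:lemma2} with the robustness criterion of Lemma~\ref{L:lemma1}, and force $u_M$ to be small by shrinking $\RRR$ (using $\lambda(0)=0$ and continuity) and $\dmax$ until $MKu_M\e^{Ku_M}<\eta_0$. The only cosmetic difference is the order in which $\RRR$ and $\dmax$ are fixed (the paper picks $\dmax$ first, you pick $\RRR_1$ first), which is immaterial since both parameters can be taken to zero independently.
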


\begin{proof}
Let $T>0$ and $\eta_0$ be as in the statement of Lemma~\ref{L:lemma1}.
The observability of the pair $(C,A)$ yields $\eta_0>0$.
Let $\dmax>0$ be such that $MK\dmax\e^{K \dmax} < \eta_0$.
For all $R>0$ and all $\delta\in\A$ satisfying $\delta(0)=0$ and $\sup\{|\delta(x)|\mid x\in\Kx\}<\dmax$, let $\supl(\RRR, \delta)=\sup\{|(\lambda+\delta)(x)|\mid x\in B(0,\RRR)\}$.
Since $\lambda+\delta$ is continuous and $\lambda(0)=\delta(0)=0$, $\supl(\cdot, \delta)$ is a continuous non decreasing function on $\R_+$ such that $\supl(0, 0)=0$ and $\supl(\RRR, \delta) \leq \supl(\RRR, 0) + \dmax$.
Then, we can choose $\RRR>0$ such that $MK(\supl(\RRR, 0) + \dmax)\e^{K (\supl(\RRR, 0) + \dmax)}< \eta_0$. 
Since $u_M(\cdot, 0)$ is non decreasing, it is possible to choose $\RRR$ such that $B(0,\RRR)\subset\Kx$.

Now, fix $\delta\in\A$ satisfying $\delta(0)=0$ and $\sup\{|\delta(x)|\mid x\in\Kx\}<\dmax$.
Let $(\xhat,\varepsilon,\Pric,\omega)$ be the solution of \eqref{E:kalman_coupled}
with initial condition  $(\xhat_0,\varepsilon_0,\Pric_0,\omega_0)\in B(0,\RRR)\times \R^n\times\sym\times \S^{n-1}$.
Then $MK\supl(\RRR, \delta)\e^{K \supl(\RRR, \delta)}< \eta_0$.
Hence, from Lemmas \ref{L:lemma1} and \ref{L:lemma2},
if $\xhat(t)\in B(0,\RRR)$ for all $t\in [0,T]$, then the control $u:t\mapsto (\lambda+\delta)(\xhat(t))$ makes system~\eqref{E:observation_system} observable in time $T$.
\end{proof}

\startmodif
\begin{proof}[Proof of Corollary~\ref{Cor:main}]
Let $\RRR>0$ and $\dmax$ be as in Proposition~\ref{Prop_zero}.
Let $\RR\in (0, \RRR)$ and $\rho\in(0, \RR)$.
We apply Corollary~\ref{obs} to the compact $\Kx \setminus B(0, \RR)$.
Since the statement holds for some $\eta$ small enough, we assume without loss of generality that $\eta<\dmax$:
there exist $\eta\in(0, \eta_1)$, $k\in\N$ and an open dense subset $\OOO \subset \NNN(k, \Kx\setminus B(0, \RR), \eta)$ such that for all $(\delta, \xhat_0, \eps_0, \Pric_0)\in \OOO \times \left(\Kx \setminus B(0, \RR)\right) \times \Keps \times \KP$, system~\eqref{E:observation_system} is observable in any time $T>0$ for the control $u = (\feed + \delta) \circ \xhat$,
where $\xhat$ follows \eqref{E:kalman_coupled} with initial conditions $(\xhat_0, \eps_0, \Pric_0)$ and feedback perturbation $\delta$.

Let
$$\OOO' = \left\{\deltat\in \NNN(k, \Kx, \eta)\cap \VV_\rho\mid \exists \delta\in\OOO, \forall x\in \Kx\setminus B(0, \RR), \deltat(x)=\delta(x)\right\}.$$
Then $\OOO'$ is open and dense in $\NNN(k, \Kx, \eta)\cap \VV_\rho$ (in the Whitney $C^\infty$ induced topology) since $\OOO$ is open and dense in $\NNN(k, \Kx\setminus B(0, \RR), \eta)$.
Moreover, if $\deltat\in\OOO'$, then
system~\eqref{E:observation_system} is still observable in any time $T>0$ for the control $u = (\feed + \deltat) \circ \xhat$ with initial conditions $(\xhat_0, \eps_0, \Pric_0)$ in $\left(\Kx \setminus B(0, \RR)\right) \times \Keps \times \KP$.

Let $(\deltat, \xhat_0, \eps_0, \Pric_0)\in \OOO' \times \K$.
If $\xhat_0\notin B(0, \RR)$, then the result holds from above.
On the other hand, assume that $\xhat_0\in B(0, \RR)$.
If $\xhat(t)\in B(0, \RRR)$ for all $t\in[0, T]$, then according to Proposition~\ref{Prop_zero}, \eqref{E:observation_system} is observable in time $T$ for the control $u = (\feed + \deltat) \circ \xhat$. Otherwise, there exists $t_0\in(0, T)$ such that $\xhat(t_0)\notin B(0, \RR)$. Apply Corollary~\ref{obs} with the new initial condition $(\xhat(t_0), \eps(t_0), \Pric(t_0))$ and with the same perturbation $\deltat$. Then \eqref{E:observation_system} is observable in time $T>t_0$ for the control $u = (\feed + \deltat) \circ \xhat$.
\end{proof}

\begin{proof}[Proof of Corollary~\ref{cor:feedback}]
Let $T>0$ and $\feed\in\Lambda$.
Let $\RRR$, $\eta$, $k$ and $\OOO$ be as in Corollary~\ref{Cor:main}.
Since $\OOO$ is dense (in the Whitney $C^\infty$ topology) in $\NNN(k, \Kx, \eta) \cap \VV_\RRR$,
for all neighborhood $\mathscr U$ of $\lambda\in \Lambda$, there exists $\delta \in \OOO$ such that $\feed+\delta\in \mathscr U\cap\FFF_T$.
Hence, $\FFF_T$ is a dense subset of $\Lambda$.
Moreover,
\begin{align*}
    \FFF_T
    &= \left\{\lambda\in\Lambda\mid
    \forall (\xhat_0, \eps_0, \Pric_0, \omega_0)\in\K\times\S^{n-1},
    \exists t\in[0, T], C\omega(t)\neq0
    \right\}\\
    &= \bigcap_{(\xhat_0, \eps_0, \Pric_0, \omega_0)\in\K\times\S^{n-1}}
    h_{\xhat_0, \eps_0, \Pric_0, \omega_0}^{-1}(\C^\infty([0, T], \R)\setminus\{0\})
\end{align*}
where $h_{\xhat_0, \eps_0, \Pric_0, \omega_0}:\Lambda\to C^\infty([0, T], \R)$
is given by
$h_{\xhat_0, \eps_0, \Pric_0, \omega_0}(\lambda) = C\omega|_{[0, T]}$
where $\omega$ is the solution of \eqref{E:kalman_coupled} with initial condition $(\xhat_0, \eps_0, \Pric_0, \omega_0)$ and $\delta\equiv0$.
The map $h$ is continuous,
the set $\C^\infty([0, T], \R)\setminus\{0\}$ is open and the set $\K\times\S^{n-1}$ is compact.
Thus $\FFF_T$ is open in $\Lambda$.
\end{proof}
\stopmodif

\section{Application to classical observers}
\label{sec:appli}

In this section, we show that there exist observers such that the key hypotheses~\ref{FC} and \ref{NFOT} are satisfied.
In particular, we show that both the Luenberger observer and the Kalman observer satisfy these hypotheses, as stated in Theorem~\ref{thm:appli}.
Hence, the main Theorem~\ref{Thm:main} and its Corollary~\ref{Cor:main} apply to these observers.
While \ref{FC} has already been studied for such observers
(see \eg \cite{Besancon, Gauthier_book}),
\ref{NFOT} is more difficult to check, and relies on the fact that the observer dynamics is somehow compatible with the Kalman observability decomposition.

For the sake of generality, 
we state the results of this section for an arbitrary output dimension $m$ (\ie $C\in\LL(\R^n,\R^m)$).
Let $\sym_n \subset \End(\R^n)$ denote the subset of real positive-definite symmetric endomorphism on $\R^n$.

Regarding hypothesis~\ref{FC}, the following result is well-known.

\begin{proposition}\label{prop:fc}
Assume that $\lambda$ is bounded over $\D(\lambda)$.
Let $Q\in\sym_n$.
For all $\Pric\in\sym_n$ and all $u\in\R$,
consider the following well-known observers:
\begin{align}
&f^\mathrm{Luenberger}(\Pric, u) = 0\tag{Luenberger observer}\\
&f^\mathrm{Kalman}_{Q}(\Pric, u) = \Pric \Au{u}^* + \Au{u}\Pric + Q - \Pric C^*C\Pric \tag{Kalman observer}
\end{align}
and $\LLL(\xi) = \Pric C^*$.
Then the coupled system~\eqref{E:kalman_coupled} given by $(f, \LLL)$ satisfies the hypothesis~\ref{FC} for any $f\in\{f^\mathrm{Luenberger}, f^\mathrm{Kalman}_{Q}\}$.
\end{proposition}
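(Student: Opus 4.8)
The plan is to treat the two observers separately; the Luenberger case is essentially trivial, and for the Kalman observer the whole difficulty is the Riccati component, which I would handle by classical a priori estimates on its maximal interval of existence.

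\emph{Luenberger observer.} Here $f^{\mathrm{Luenberger}}\equiv 0$, so the first part of \ref{FC} is immediate and along any solution of \eqref{E:kalman_coupled} one has $\Pric(t)\equiv\Pric_0$. It then remains to prove forward completeness of the $(\xhat,\eps,\omega)$-subsystem with $\Pric$ frozen at $\Pric_0$. I would first note that, by the standing boundedness assumptions on $\lambda$ and $\delta$ (and, in the general case, after the saturation reduction of the remark following Theorem~\ref{thm:appli}), the control $u(t)=(\lambda+\delta)(\xhat(t))$ stays bounded along any solution. Then $\dot\eps=(\Au{u}-\LLL(\Pric_0)C)\eps$ and $\dot\omega=\Au{u}\omega$ are linear in $(\eps,\omega)$ with coefficients bounded on every compact time interval, hence cannot blow up in finite time, while $\dot{\xhat}=\Au{u}\xhat+bu-\LLL(\Pric_0)C\eps$ then grows at most linearly in $\xhat$, so a Grönwall estimate closes the argument; uniqueness and $C^\infty$ regularity follow from smoothness of the right-hand side.

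\emph{Kalman observer, the vector field $f^{\mathrm{Kalman}}_Q$.} Fix $u\in C^\infty(\R_+,\R)$ and let $[0,T^*)$ be the maximal forward interval of the solution issued from $\Pric_0\in\sym_n$. I would establish the three classical estimates on $[0,T^*)$: (i) the equation preserves symmetry of $\Pric$; (ii) the upper bound $\Pric(t)\le M(t)$, where $M$ solves the linear Lyapunov equation $\dot M=\Au{u}M+M\Au{u}^*+Q$, $M(0)=\Pric_0$ (because $-\Pric C^*C\Pric\le 0$); (iii) the lower bound, obtained by setting $N=\Pric^{-1}$, which satisfies $\dot N=-N\Au{u}-\Au{u}^*N-NQN+C^*C\le -N\Au{u}-\Au{u}^*N+C^*C$, whence $N(t)\le L(t)$ for the corresponding linear equation, and since $L(t)>0$ this keeps $\Pric(t)$ positive definite. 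On any $[0,T]$ with $T<T^*$, $\|\Au{u(t)}\|$ is bounded, so $M$ and $L$ remain bounded; thus $\Pric(t)$ stays in a compact subset of the open cone $\sym_n$, which forces $T^*=+\infty$. This proves the first part of \ref{FC} for the Kalman observer.

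For the coupled system \eqref{E:kalman_coupled} with $\LLL(\Pric)=\Pric C^*$ I would re-run the same maximal-interval scheme: on $[0,T^*)$ the boundedness of $u=(\lambda+\delta)\circ\xhat$ (as above) makes $\Au{u}$ bounded, hence the Riccati estimates (i)--(iii) keep $\Pric$ in a fixed compact subset of $\sym_n$; then $\dot\eps=(\Au{u}-\Pric C^*C)\eps$ has bounded coefficients so $\eps$ cannot blow up, and $\dot{\xhat}=\Au{u}\xhat+bu-\Pric C^*C\eps$ grows at most linearly in $\xhat$, so $\xhat$ is bounded on $[0,T^*)$; the $\omega$-component is a linear ODE with coefficients bounded on compacts, hence complete. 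Therefore $T^*=+\infty$, and uniqueness and smoothness are automatic. The main obstacle I anticipate is estimate (iii): preventing the Riccati solution from reaching the boundary of $\sym_n$ in finite time, for which the passage to $N=\Pric^{-1}$ and the comparison with a linear equation is the key device; the only other delicate point, which I would dispatch via the saturation remark, is justifying the boundedness of $u=(\lambda+\delta)\circ\xhat$ along trajectories even though $\xhat$ need not remain in $\D(\lambda)$.
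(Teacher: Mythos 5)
Your argument is correct in substance, but note that the paper itself offers no proof of this proposition at all: it declares the result ``well-known'' and points to the literature (\cite{Besancon, Gauthier_book}). What you have written is precisely the standard argument those references contain, so you are filling a gap the authors chose to leave open rather than diverging from their route. The Riccati part is sound: symmetry preservation, the upper bound $\Pric\le M$ via $\Delta=M-\Pric$ satisfying a linear Lyapunov equation with source $\Pric C^*C\Pric\ge 0$, and the lower bound via $N=\Pric^{-1}$ compared to the linear equation for $L$ (here $Q\in\sym_n$, hence $NQN\ge0$, is what makes the comparison work, and $L(t)\ge\Phi_u(t)^{-*}\Pric_0^{-1}\Phi_u(t)^{-1}>0$ keeps $\Pric=N^{-1}$ uniformly positive definite on compact time intervals). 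Together these confine $\Pric$ to a compact subset of the open cone $\sym_n$ on every $[0,T]$, so $T^*=+\infty$.

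The one soft spot is exactly the one you flag: the boundedness of $u=(\lambda+\delta)\circ\xhat$ along trajectories. Boundedness of $\lambda$ and $\delta$ \emph{over $\D(\lambda)$} does not by itself bound $u$, because the correction term $-\LLL(\Pric)C\eps$ can in principle drive $\xhat$ out of $\D(\lambda)$, where $\lambda+\delta$ is unconstrained; the term $(\lambda+\delta)(\xhat)B\xhat$ is then no longer of linear growth and Gr\"onwall does not close. Invoking the saturation remark handles $\lambda$ (it produces a \emph{globally} bounded $\lambdasat$), but it does not by itself handle $\delta$, which hypothesis \ref{FC} only assumes bounded over $\D(\lambda)$. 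To close this cleanly you should either observe that all perturbations actually used in the paper lie in $\NNN(k,\Kx,\eta)\cap\VV_R$ and may be taken globally bounded without loss of generality, or state explicitly that you prove \ref{FC} under the (harmless) strengthening that $\lambda+\delta$ is bounded on $\R^n$. This is a defect inherited from the paper's formulation rather than an error in your reasoning, but as written the sentence ``the control $u$ stays bounded along any solution'' is asserted rather than proved.
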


Let us investigate hypothesis~\ref{NFOT}.
First, we state sufficient conditions for it to hold, and then show that they are satisfied by both the Kalman and  Luenberger observers.

For all $A_0\in C^{\infty}\left(\R_+,\End(\R^n)\right)$ and for all $C_0 \in \LL(\R^n,\R^m)$, let $f(\cdot, A_0, C_0)$ be a forward complete time-varying vector field over $\sym_n$.
Let $\LLL:\Prics_n\times\LL(\R^n,\R^m) \to \LL(\R^m, \R^n)$.
For all $T\in\GL(\R^n)$, for all $(\bar{A}, \bar{C})\in\End(\R^n)\times\LL(\R^n,\R^m)$ and for all $\Pric\in \sym_n$, let $(\bar{f}, \bar{\LLL})$ be defined by
\begin{align}
\begin{cases}
    \bar{f}(T\Pric T^*, T\bar{A}T^{-1}, \bar{C}T^{-1}) = Tf(\Pric, \bar{A}, \bar{C})T^*\\
    \bar{\LLL}(T\Pric T^*, \bar{C}T^{-1}) = T\LLL(\Pric, \bar{C}).
\end{cases}\label{Hyp:T}
\end{align}
For all $(\bar{A}, \bar{C}, \bar{b}) \in \End(\R^n) \times \LL(\R^n,\R^m)\times\R^n$, we consider the following dynamical observer system
\begin{equation}\label{E:Observer_system_const}
\left\{
\begin{aligned}
&\dot{\xhat}=\bar{A} \xhat +  \bar{b} - \bar{\LLL}(\Pric, \bar{C})\bar{C}\varepsilon
\\
&\dot{\eps}=\left( \bar{A}  -\bar{\LLL}(\Pric, \bar{C})\bar{C} \right) \eps
\\
&\dot{\Pric}= \bar{f}(\Pric,\bar{A}, \bar{C}).
\end{aligned}
\right.
\end{equation}
For all $k\in\{1,\dots,n\}$, let $(\bar{A}, \bar{C}) \in \End(\R^n) \times \LL(\R^n,\R^m)$ having
the following structure:
\begin{equation}\label{eq:struct}
\begin{aligned}
\bar{A} = 
\begin{pmatrix}
A_{11} & 0\\
A_{21} & A_{22}
\end{pmatrix},\qquad
\bar{C} = 
\begin{pmatrix}
C_1 & 0
\end{pmatrix},
\end{aligned}
\end{equation}
with suitable matrices $A_{11} \in \End(\R^k)$, $A_{21} \in \LL(\R^k, \R^{n-k})$, $A_{22} \in \End(\R^{n-k})$ and $C_1 \in \LL(\R^k, \R^m)$.
For any solution of \eqref{E:Observer_system_const}, set similarly
\begin{equation*}
\begin{aligned}
\xhat =
\begin{pmatrix}
\xhat_{1} \\
\xhat_{2}
\end{pmatrix},\quad
\eps =
\begin{pmatrix}
\eps_{1} \\
\eps_{2}
\end{pmatrix},\quad
\bar{b} =
\begin{pmatrix}
b_{1} \\
b_{2}
\end{pmatrix},\quad
\Pric =
\begin{pmatrix}
\Pric_{11} & \Pric_{12}\\
\Pric^*_{12} & \Pric_{22}
\end{pmatrix}.
\end{aligned}
\end{equation*}

\begin{proposition}\label{prop:appli}

Assume that the pair $(C, A)$ is observable. Assume that for all  $T\in\GL(\R^n)$, for all $(\bar{f}, \bar{\LLL})$ as in \eqref{Hyp:T}, for all $k\in\{1,\dots,n\}$ and for all $(\bar{A}, \bar{C}) \in \End(\R^n) \times \LL(\R^n,\R^m)$ as in \eqref{eq:struct},
the following hypotheses hold.
\begin{enumerate}[label={\textnormal{H\arabic*.}}, ref={(\textnormal{H\arabic*})}]
    \item There exists $(f_{11}, \LLL_1)$ such that
    \begin{equation}\label{E:kalman_coupled_obs}
    \left\{
    \begin{aligned}
    &\dot{\xhat}_1= A_{11}\xhat_1 + b_1 - \LLL_1(\Pric_{11}, C_1)C_1 \eps_1
    \\
    &\dot{\eps}_1=\left(A_{11} - \LLL_1(\Pric_{11}, C_1)C_1\right) \eps_1
    \\
    &\dot{\Pric}_{11}= f_{11}(\Pric_{11}, A_{11}, C_1)
    \end{aligned}
    \right.
    \end{equation}
    where $(f_{11},\LLL_1)$ is such that
    \begin{align*}
    \bar{f}(\Pric,\bar{A}, \bar{C}) = 
    \begin{pmatrix}
    f_{11}(\Pric_{11}, A_{11}, C_1) && *\\
    * && *
    \end{pmatrix},\qquad
    \bar{\LLL}(\Pric, \bar{C}) = 
    \begin{pmatrix}
    \LLL_1(\Pric_{11}, C_1)\\
    *
    \end{pmatrix}.
    \end{align*}
    \label{Hyp:11}
    \item If $(C_1, A_{11})\in\LL(\R^k,\R^m)\times\End(\R^k)$ is an observable pair, then the solutions of \eqref{E:Observer_system_const} are such that for any initial conditions, ${\LLL}_{11}(\Pric_{11}(t), {C}_{1}){C}_{1}\eps_{1}(t)\to 0$ as $t\to+\infty$. \label{Hyp:conv}
\item For all $\Pric_{11}\in\sym_k$ and all ${C}_{1}\in\LL(\R^k,\R^m),\
\ker {\LLL}_{1}(\Pric_{11}, {C}_{1}) \cap \Image {C}_1 = \{0\}$. \label{Hyp:ker}
\end{enumerate}
Then the coupled system~\eqref{E:kalman_coupled} given by $(f(\cdot, \Au{u}, C),\LLL(\cdot, C))$ satisfies the hypothesis~\ref{NFOT}.
\end{proposition}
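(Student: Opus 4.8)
The plan is to argue by contradiction: suppose \ref{NFOT} fails, so for every $\eta>0$ there is some $\delta\in\VV_R$ with $\sup\{|\delta(x)|\mid x\in\Kx\}<\eta$ and some initial condition $(\xhat_0,\eps_0,\Pric_0,\omega_0)\in\K\times\S^{n-1}$ with $(\xhat_0,\eps_0)\neq(0,0)$ whose solution is \emph{flat at $0$} in the $\xhat$-component, i.e. $\xhat^{(k)}(0)=0$ for all $k\geq 1$. Since $\xhat$ is real-analytic in $t$ near $0$ (the right-hand side of \eqref{E:kalman_coupled} is analytic in $(\xhat,\eps,\Pric)$ once $\delta$ is, and we may first reduce to $\delta\equiv 0$ on a neighborhood of $\xhat_0$ by shrinking $\eta$ and invoking $\delta\in\VV_R$ together with continuity arguments — this is exactly why \ref{NFOT} is phrased with $\delta\in\VV_R$), flatness at $0$ forces $\xhat(t)\equiv \xhat_0$ on an interval, hence $\dot{\xhat}(0)=A_{\lambda(\xhat_0)}\xhat_0+b\lambda(\xhat_0)-\LLL(\Pric_0)C\eps_0=0$ and similarly all higher-order identities collapse. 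The core of the argument is then to show this is impossible unless $(\xhat_0,\eps_0)=(0,0)$, and this is where the Kalman observability decomposition (\ref{Hyp:11}, \ref{Hyp:conv}, \ref{Hyp:ker}) enters.

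First I would put $(A_{\lambda(\xhat_0)},C)$ — or more precisely the relevant linearized/time-frozen data — into the Kalman observable/unobservable block form \eqref{eq:struct} via a suitable $T\in\GL(\R^n)$, using that $(C,A)$ observable does \emph{not} imply $(C,A_{u})$ observable but does give us a canonical decomposition for each frozen $u$; the key structural input is that the observer data $(f,\LLL)$ transforms equivariantly under $T$ as in \eqref{Hyp:T}, so the decomposition is respected by the whole coupled dynamics. In the new coordinates the first block $(\xhat_1,\eps_1,\Pric_{11})$ evolves autonomously by \ref{Hyp:11} and corresponds to an \emph{observable} pair $(C_1,A_{11})$ (assuming $k\geq 1$; the case $k=0$, fully unobservable, needs separate handling but there $C_1$ is vacuous). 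Then \ref{Hyp:conv} forces $\LLL_1(\Pric_{11}(t),C_1)C_1\eps_1(t)\to 0$; combined with flatness (which makes everything stationary), this pins down $\eps_1=0$ at the initial time, and then \ref{Hyp:ker} — stating $\ker\LLL_1\cap\Image C_1=\{0\}$ — is what lets us conclude $C_1\eps_1=0\Rightarrow \eps_1=0$ rigorously rather than merely $\LLL_1 C_1\eps_1=0$. Propagating back through the block structure and the flatness relations should then give $\eps=0$, and feeding $\eps=0$ into the $\xhat$-equation together with $\xhat$ being a stationary point of $x\mapsto A_{\lambda(x)}x+b\lambda(x)$ forces $\xhat_0$ to be an equilibrium of the closed-loop vector field; since $\lambda(0)=0$ and $0$ is the target, a mild nondegeneracy (the closed-loop system having $0$ as its only equilibrium in a neighborhood, or reducing to that case) yields $\xhat_0=0$, contradicting $(\xhat_0,\eps_0)\neq(0,0)$.

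The uniformity in $\eta$ and over the compact set $\K\times\S^{n-1}$ I would handle by a standard compactness/sequential argument: take $\eta_j\to 0$ and perturbations $\delta_j$ and bad initial conditions $z_j$; extract a convergent subsequence $z_j\to z_\infty\in\K\times\S^{n-1}$; show by continuous dependence of solutions of \eqref{E:kalman_coupled} on initial conditions and on the (uniformly small, eventually vanishing near $\xhat_0$) perturbation that the limiting trajectory is flat at $0$ with $(\xhat_\infty,\eps_\infty)\neq(0,0)$ — here I must be a little careful that the flatness order $k$ in the definition is allowed to depend on the initial condition, so what I actually get in the limit is genuine $t$-flatness, hence stationarity, which is the clean hypothesis to run the block argument on — and then derive the same contradiction. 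This produces a uniform $\eta$ working for all of $\K\times\S^{n-1}$, which is precisely the statement of \ref{NFOT}.

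The main obstacle I anticipate is the interplay between the analyticity/stationarity reduction and the presence of the perturbation $\delta$: \ref{NFOT} must hold for a whole family of $\delta\in\VV_R$ that vanish only on $B(0,R)$, so near a generic $\xhat_0\in\Kx$ the closed-loop vector field is genuinely $\lambda+\delta$ and is only $C^\infty$, not analytic, so I cannot directly conclude that flatness at $0$ implies stationarity. Resolving this is the crux — I expect one must either exploit that flatness forces $\xhat^{(1)}(0)=0$, i.e. $A_{\lambda(\xhat_0)}\xhat_0+b\lambda(\xhat_0)=\LLL(\Pric_0)C\eps_0$, and then recursively differentiate the $\eps$-equation (which \emph{is} linear in $\eps$ with smooth time-dependent coefficients) to get a nested sequence of linear constraints on $\eps_0,\Pric_0$-jets that already forces $\eps_0=0$ via \ref{Hyp:conv}--\ref{Hyp:ker}, or else carefully arrange the $\delta\in\VV_R$ hypothesis so that on the trajectory emanating from any \emph{bad} $\xhat_0$ one has $\xhat(t)\in B(0,R)$ for small $t$ — but that seems false for $\xhat_0$ far from $0$, so the recursive-linear-constraint route through the $\eps$- and $\Pric$-equations is the one I would bet on, and making that recursion actually terminate and yield $\eps_0=0$ is where the real work lies.
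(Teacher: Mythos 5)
You have the right overall shape (flatness of $\xhat$ at $t=0$ $\Rightarrow$ quasi-stationarity $\Rightarrow$ Kalman decomposition with \ref{Hyp:11}--\ref{Hyp:ker} $\Rightarrow$ the correction term vanishes $\Rightarrow$ $\xhat_0$ is an equilibrium of the closed loop $\Rightarrow$ $\xhat_0=0$), but the step you yourself identify as the crux is left unresolved, and the route you ``bet on'' cannot work as described. Hypothesis \ref{Hyp:conv} is an \emph{asymptotic} statement (convergence of the correction term as $t\to+\infty$ for the constant-coefficient system \eqref{E:Observer_system_const}); a recursion on the jets of the $\eps$- and $\Pric$-equations at $t=0$ gives you no access to it. The paper's resolution is to introduce the auxiliary trajectory $(\xhatan,\epsan,\Pan)$ of the \emph{frozen-input} system with constant input $u_0=(\lambda+\delta)(\xhat_0)$, and to prove (Lemma~\ref{lem4}, via Fa\`a di Bruno) that flatness of $\xhat$ forces \emph{all} jets at $t=0$ of the true smooth trajectory to coincide with those of this frozen trajectory; the frozen system is analytic, so $\xhatan$ is genuinely constant on $[0,+\infty)$, one may let $t\to+\infty$ there, and \ref{Hyp:conv} applies (Lemma~\ref{lem2}) to give $\LLL(\Pan,C)C\epsan\equiv0$. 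This bridge from $0$-jets to a globally defined constant-input trajectory is the missing idea in your proposal.

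A second genuine gap: you claim that \ref{Hyp:ker} lets you conclude ``$C_1\eps_1=0\Rightarrow\eps_1=0$'' and that propagating through the blocks gives $\eps=0$. \ref{Hyp:ker} only yields $\LLL_1C_1\eps_1=0\Rightarrow C_1\eps_1=0$ ($C_1$ need not be injective), and the decomposition of the frozen pair $(C,A_{u_0})$ says nothing about the unobservable component $\eps_2$. The paper instead stops at $C_1\eps_1\equiv0$, deduces that the correction term is zero, hence $F_{\lambda+\delta}(\xhat_0)=0$, hence $\xhat_0=0$ by the equilibrium-uniqueness Lemma~\ref{lem1} (this lemma is also what furnishes the uniform $\eta$ of \ref{NFOT} directly --- your compactness/sequential extraction is unnecessary); only then, since $u_0=(\lambda+\delta)(0)=0$, does the observability of the \emph{original} pair $(C,A)$ become available, and $\eps_0=0$ follows from a separate induction showing $CA^k\eps_0=0$ for all $k$ via the Leibniz rule and \ref{Hyp:ker}. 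Without first establishing $\xhat_0=0$ and $u_0=0$, there is no observable pair with which to kill $\eps_0$.
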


\begin{remark}
In the case where $T$ is the identity matrix and $k = n$, \ref{Hyp:11} is clearly satisfied, \ref{Hyp:conv} means that the correction term $\LLL(\Pric, \bar{C})\bar{C}\eps$ converges to zero for any observable pair $(\bar{A}, \bar{C})$, and \ref{Hyp:ker} means that the correction term is null if and only if $\bar{C}\eps = 0$. We will see in Theorem~\ref{thm:appli} that these hypotheses are clearly satisfied for the Luenberger and Kalman observers.
\end{remark}
\begin{remark}
Hypothesis~\ref{Hyp:11} can be seen as a compatibility condition between the observer dynamics and the Kalman observability decomposition: when $\bar{A}$ is of the standard form \eqref{eq:struct}, the observer acts autonomously on the upper left matrix block, which will correspond to the observable part of the system.
\end{remark}

This proposition is a consequence of the series of lemmas that follows.
Until the end of the proof of Proposition~\ref{prop:appli}, assume that its hypotheses are satisfied.
For any $\mu:\R^n\to \R$, $F_\mu$ denotes the vector field over $\R^n$ given by
\(
F_{\mu}(x)= \Au{\mu(x)} x+  b \mu(x).
\)

\begin{lemma}
\label{lem1}
For all $R>0$, there exists $\eta>0$ such that for all $\delta \in\VV_R$ satisfying $\sup\{\abs{\delta(x)}\mid x\in \Kx\} < \eta$, $0$ is the unique equilibrium point of $F_{\feed+\delta}$ lying in $\Kx$.
\end{lemma}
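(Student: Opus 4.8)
The plan is to establish the claim by a compactness-plus-continuity argument built around the equilibrium equation $F_{\feed+\delta}(x)=0$, \ie $\Au{(\feed+\delta)(x)}x + b(\feed+\delta)(x)=0$. First I would record that $0$ is always an equilibrium of $F_{\feed+\delta}$: indeed $\lambda(0)=0$ and, since $\delta\in\VV_R$ means $\delta$ vanishes on a ball $B(0,R)$, we have $(\feed+\delta)(0)=0$, hence $F_{\feed+\delta}(0)=A\cdot 0 + b\cdot 0 = 0$. So the only thing to prove is that there is no \emph{other} equilibrium in $\Kx$ once $\delta$ is small enough. Because $\delta\in\VV_R$ forces $\feed+\delta=\feed$ on $B(0,R)$, and $0$ is (by hypothesis on $\lambda$) an asymptotically stable equilibrium of $F_\feed$, I may shrink $R$ if necessary so that $F_\feed$ has no equilibrium in $\overline{B(0,R)}\setminus\{0\}$ (asymptotic stability gives a neighbourhood of $0$ containing no other equilibrium); thus on $B(0,R)$ the vector field $F_{\feed+\delta}$ agrees with $F_\feed$ and has $0$ as its unique equilibrium there, for \emph{every} $\delta\in\VV_R$. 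It therefore remains to rule out equilibria in the compact set $\K_1':=\Kx\setminus B(0,R)$, which does not contain $0$.

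On $\K_1'$ I would argue by contradiction and compactness. Suppose the conclusion fails: then there is a sequence $\delta_j\in\VV_R$ with $\sup\{|\delta_j(x)|: x\in\Kx\}\to 0$ and points $x_j\in\K_1'$ with $F_{\feed+\delta_j}(x_j)=0$. By compactness of $\K_1'$, pass to a subsequence with $x_j\to x_*\in\K_1'$. Write $\mu_j:=(\feed+\delta_j)(x_j)=\feed(x_j)+\delta_j(x_j)$; since $\feed$ is continuous and $\delta_j\to 0$ uniformly on $\Kx$, we get $\mu_j\to\mu_*:=\feed(x_*)$. Passing to the limit in $\Au{\mu_j}x_j + b\mu_j=0$ and using continuity of $(u,x)\mapsto \Au{u}x+bu$ yields $\Au{\mu_*}x_* + b\mu_* = 0$, \ie $F_\feed(x_*)=0$ with $x_*\in\K_1'$, hence $x_*\neq 0$. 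This contradicts the fact that $0$ is the unique equilibrium of $F_\feed$ in $\Kx$ — which is exactly the statement one must secure as a standing hypothesis on $\lambda$ (it is implicit in ``$0$ is asymptotically stable with domain of attraction $\D(\lambda)$ and $\Kx\subset\D(\lambda)$''; an asymptotically stable point attracts a neighbourhood and any equilibrium in the basin of attraction of an asymptotically stable point must coincide with it, since a constant trajectory cannot converge to $0$ unless it is $0$). This produces $\eta>0$ for the compact piece $\K_1'$, and combining with the trivial estimate on $B(0,R)$ (any $\eta$ works there) gives the result.

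I expect the main obstacle to be \emph{bookkeeping about what ``0 is the unique equilibrium of $F_\feed$ in $\Kx$'' rests on}, rather than any analytic subtlety: one must be careful that asymptotic stability of $0$ for $F_\feed$, together with $\Kx\subset\D(\lambda)$, genuinely forbids other equilibria in $\Kx$ — this is true because every point of $\D(\lambda)$ flows to $0$, and an equilibrium $x_*$ is its own trajectory, so $x_*=\lim_{t\to\infty}x_* = 0$. A secondary point of care is the role of $\VV_R$: the hypothesis $\delta\in\VV_R$ is what lets us ignore a whole neighbourhood of the target for free (there $F_{\feed+\delta}=F_\feed$), and without it one would additionally have to control equilibria created near $0$ by the perturbation, which the remark just before the ``Main issue'' already addresses at the level of stability. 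Modulo these structural remarks, the proof is the two-line compactness argument above, and I would present it essentially as written, emphasising the passage to the limit in the polynomial-in-$(u,x)$ equilibrium equation and the contradiction with uniqueness of the equilibrium of the unperturbed closed-loop field.
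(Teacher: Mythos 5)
Your proof is correct and follows essentially the same route as the paper's: both split $\Kx$ into the ball $B(0,R)$, where $\delta$ vanishes and the claim reduces to uniqueness of the equilibrium of $F_\feed$, and the compact set $\Kx\setminus B(0,R)$, on which $F_\feed$ is bounded away from zero. The only difference is presentational: the paper extracts an explicit threshold $\eta = C_1/C_2$ from the identity $F_{\feed+\delta}(x)=F_\feed(x)+\delta(x)(Bx+b)$, whereas you reach the same conclusion by a sequential compactness/contradiction argument (and you are, if anything, more careful than the paper in justifying why asymptotic stability together with $\Kx\subset\D(\feed)$ forbids other equilibria of $F_\feed$).
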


\begin{proof}
Let $R>0$ and $\delta\in\VV_R$. Let $x\in\Kx$ be such that $F_{\feed+\delta}(x) = 0$. Then,
\begin{align*}
0=F_{\feed+\delta}(x) = F_{\feed}(x) + \delta(x)(Bx + b).
\end{align*}
Then $|F_{\feed}(x)| = \abs{\delta(x)}|Bx + b|$.
Set $C_1 = \inf \{|F_{\feed}(x)|\mid x\in \Kx \backslash B(0, R)\} $. Since $0$ is not in the closure of $\Kx \backslash B(0, R)$, we get by uniqueness of the equilibrium point of $F_\feed$ that $C_1>0$.
Set also $C_2 = \sup \{|Bx+b|\mid x\in \Kx\}$. Since $\Kx$ is compact, $C_2<+\infty$.
Set $\eta = \frac{C_1}{C_2}$. Assume that $\sup \{\abs{\delta(x)}: x\in \Kx\} < \eta$. Then,
\begin{align*}
F_{\feed}(x) & \leq \eta \abs{Bx + b}
\leq C_1.
\end{align*}
Hence $x\in B(0, R)$ by definition of $C_1$. Then $\delta(x) = 0$. Hence $F_{\feed}(x) = 0$. Thus, $x=0$ since $0$ is the unique equilibrium point of $F_{\feed}$.
Moreover, by definition of $\VV_R$, $F_{\feed+\delta}(0) = 0$.
\end{proof}

\begin{lemma}
\label{lem2}
Assume that the pair $(C, A)$ is observable.
Let $(\cont_0, \xhat_0, \eps_0, \Pric_0) \in \R \times \R^n \times \R^n \times\sym$. Let $(\xhat, \eps, \Pric)$ be the solution of \eqref{E:Observer_system} given by the initial condition $(\xhat_0, \eps_0, \Pric_0)$ and the constant input $\cont\equiv\cont_0$.
If $\xhat$ is constant, then for all $t\in\R_+$, $\LLL(\Pric(t), C)C\eps(t) = 0$.
\end{lemma}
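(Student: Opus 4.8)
The plan is to exploit the structure of system~\eqref{E:Observer_system} with constant input $\cont\equiv\cont_0$: the $\xhat$-equation and the $\eps$-equation share the same linear operator $\Au{\cont_0}-\LLL(\Pric)C$, and this will force $\eps$ to inherit strong rigidity once $\xhat$ is assumed constant. First I would write $A_0 = \Au{\cont_0}$ for brevity and note that, $\xhat$ being constant, we have $\dot{\xhat}\equiv 0$, so the first equation of \eqref{E:Observer_system} gives, for all $t$,
\[
A_0\xhat_0 + b\cont_0 = \LLL(\Pric(t))C\eps(t).
\]
Thus $t\mapsto \LLL(\Pric(t))C\eps(t)$ is \emph{constant}, equal to some fixed vector $w\in\R^n$. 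The goal is to prove $w=0$.

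Next I would substitute this into the $\eps$-equation, which becomes the \emph{linear} (indeed affine with constant right-hand side after using the identity above) equation $\dot\eps = A_0\eps - w$, i.e. $\dot\eps = A_0\eps - \LLL(\Pric(t))C\eps$, but since the correction term is the constant $w$ we get simply $\dot\eps(t) = A_0\eps(t) - w$ for all $t$. Differentiating repeatedly in $t$ (all quantities are smooth), $\eps^{(j+1)} = A_0\,\eps^{(j)}$ for $j\ge 1$, while $\eps^{(1)}(t) = A_0\eps(t)-w$. Now apply $C$: since $\LLL(\Pric(t))C\eps(t)\equiv w$, and more to the point since we want to relate $w$ to the observability pair $(C,A_0)$, I would argue as follows. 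From $\dot\eps = A_0\eps - w$ we get $C\dot\eps = CA_0\eps - Cw$ and inductively $C\eps^{(k)} = CA_0^{k-1}(A_0\eps - w) = CA_0^k\eps - CA_0^{k-1}w$ for $k\ge1$. The key extra relation is obtained by differentiating $\LLL(\Pric(t))C\eps(t)\equiv w$: this does not immediately give $C\eps\equiv 0$. Instead, the cleaner route is to observe that $w = \LLL(\Pric(t))C\eps(t)$ lies in $\Image \LLL(\Pric(t))$, but we cannot yet use hypothesis~\ref{Hyp:ker} because $\LLL$ here is evaluated at the full state. So I would instead use the \emph{observability of $(C,A_0)$}: note $(C,A_0) = (C, A+\cont_0 B)$ need not be observable in general — however the lemma only assumes $(C,A)$ observable. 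Here is the resolution: rather than $(C,A_0)$, use that $\xhat$ constant forces, via \eqref{E:Observer_system}, the \emph{output} $y = C\xhat$ to be constant, hence all derivatives of $C\eps$ along the dynamics are controlled; the honest claim to prove is just $\LLL(\Pric(t))C\eps(t)\equiv 0$, equivalently $w=0$. Multiply the identity $\dot\eps = A_0\eps-w$ against the adjoint flow or, more simply: since $w$ is constant, $\frac{\diff}{\diff t}\big(\LLL(\Pric(t))C\eps(t)\big) = 0$; but also $w = A_0\xhat_0 + b\cont_0$ is determined, and plugging $\eps$'s behaviour: if $w\ne 0$, then $\eps(t) = \e^{A_0 t}\eps_0 + (\text{particular solution}) $, so $C\eps(t) = C\e^{A_0t}\eps_0 - \int_0^t C\e^{A_0(t-s)}w\,\diff s$, whence $\LLL(\Pric(t))C\eps(t)\equiv w$ forces a polynomial/exponential identity that, combined with observability, pins down $w$.

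The step I expect to be the main obstacle — and the one needing care — is precisely closing this last argument: turning "$\LLL(\Pric(t))C\eps(t)$ is constant and $\dot\eps=A_0\eps-w$" into "$w=0$" using only observability of $(C,A)$ (not of $(C,A_0)$). I anticipate that the correct mechanism is: because $\xhat$ is constant, the input is truly constant so $A_0 = A$ up to the fixed shift, and $C\eps$ satisfies a linear ODE whose solution space is finite-dimensional; differentiating $C\eps$ enough times and using that the only way $\LLL(\Pric(t))(C\eps(t))$ can be a nonzero constant while $C\eps$ solves this ODE is ruled out by examining $t\to\infty$ asymptotics (or by an algebraic independence/observability argument showing $C\eps$ must be identically zero, hence $w$ too). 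In the write-up I would isolate this as the heart of the proof and handle the genericity-free observability input there; the rest is bookkeeping with the variation-of-constants formula already used in Lemma~\ref{L:lemma2}.
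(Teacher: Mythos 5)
Your setup is correct and matches the paper's first step: constancy of $\xhat$ turns the first equation of \eqref{E:Observer_system} into $A_0\xhat_0+b\cont_0=\LLL(\Pric(t),C)C\eps(t)$ with $A_0=A+\cont_0 B$, so the correction term is a constant vector $w$ and the task is to show $w=0$. But the proposal stops exactly where the actual proof begins. You rightly observe that $(C,A_0)$ need not be observable and that hypothesis~\ref{Hyp:ker} cannot be invoked directly, and then you only gesture at possible conclusions (``$t\to\infty$ asymptotics'', ``algebraic independence''), explicitly flagging the closing step as an unresolved obstacle. None of the routes you sketch can work at this level of generality: for an arbitrary pair $(f,\LLL)$ nothing prevents the correction term from being a nonzero constant. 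The lemma is proved under the standing hypotheses \ref{Hyp:11}--\ref{Hyp:ker} of Proposition~\ref{prop:appli} (the paper states that these are assumed throughout this series of lemmas), and your argument never uses \ref{Hyp:11} or \ref{Hyp:conv} at all.

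The missing mechanism is the following. Apply the Kalman observability decomposition to the pair $(C,A_0)$ --- not to $(C,A)$: there is $T\in\GL(\R^n)$ putting $(TA_0T^{-1},CT^{-1})$ into the block-triangular form \eqref{eq:struct} with $(C_1,A_{11})$ observable; the hypothesis that $(C,A)$ is observable enters only to guarantee $C\neq0$, so that the observable block has dimension $k\geq1$. Hypothesis~\ref{Hyp:11}, together with the equivariance \eqref{Hyp:T}, says the observer restricts to an autonomous observer $(f_{11},\LLL_1)$ on that block, and hypothesis~\ref{Hyp:conv} then gives $\LLL_1(\Pric_{11}(t),C_1)C_1\eps_1(t)\to0$ as $t\to+\infty$. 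Since this quantity equals the constant $K_1$ (the first block of $Tw$), it must vanish; hypothesis~\ref{Hyp:ker} then forces $C_1\eps_1\equiv0$, hence $\bar C\bar\eps\equiv0$ and $w=0$. Without this decomposition-plus-convergence argument the proof does not close, so the proposal has a genuine gap rather than an alternative route.
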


\begin{proof}
Let $(\cont_0, \xhat_0, \eps_0, \Pric_0) \in \R\times \R^n \times \R^n\times\sym $. Let $(\xhat, \eps, \Pric)$ be the solution of \eqref{E:Observer_system} given by the initial condition $(\xhat_0, \eps_0, \Pric_0)$ and the constant input $\cont\equiv\cont_0$.
Assume that $\xhat$ is constant, \ie $\xhat \equiv \xhat_0$.
Set $A_0=A+\cont_0 B$ and $b_0=b \cont_0 $. Then $\dot{\xhat} \equiv 0$ yields
\begin{align*}
A_0 \xhat + b_0 - \LLL(\Pric, C)C \eps \equiv 0.
\end{align*}
Since $\xhat$ is constant, so is $\LLL(\Pric)C \eps$. Then, set $K = \LLL(\Pric, C)C\eps$. It remains to show that $K=0$.

Let $k = \rank \OO(C, A_0)$ where $\OO(C, A_0)$ is defined by \eqref{E:Kalman-matrix}
Since $C\neq 0$ (since $(C, A_0)$ is observable), $k\geq1$.
According to the Kalman observability decomposition, there exists an invertible endomorphism $T\in\GL(\R^n)$ such that $\bar{A} = TA_0T^{-1}$ and $\bar{C} = CT^{-1}$ have the following structure:
\begin{equation}
\begin{aligned}
\bar{A} = 
\begin{pmatrix}
A_{11} & 0\\
A_{21} & A_{22}
\end{pmatrix},\qquad
\bar{C} = 
\begin{pmatrix}
C_1 & 0
\end{pmatrix},
\end{aligned}
\end{equation}
with suitable matrices $A_{11} \in \End\big(\R^k\big)$, $A_{21} \in \LL\big(\R^k, \R^{n-k}\big)$, $A_{22} \in \End\big(\R^{n-k}\big)$ and $C_1 \in \LL\big(\R^k, \R^m\big)$.
Moreover, the pair $(C_1, A_{11})$ is observable.
For the sake of readability, we omit the horizontal bars over the submatrices (for instance, $A_{11}$ is a submatrix of $\bar{A}$ and not of $A$).
Similarly, set
\begin{equation*}
\begin{array}{lllll}
\bar{x} = Tx =
\begin{pmatrix}
x_{1} \\
x_{2}
\end{pmatrix},\qquad &&
\bar{\xhat} = T\xhat =
\begin{pmatrix}
\xhat_{1} \\
\xhat_{2}
\end{pmatrix},\qquad &&
\bar{\eps} = T\eps =
\begin{pmatrix}
\eps_{1} \\
\eps_{2}
\end{pmatrix},\\
\bar{b}_0 = Tb_0 =
\begin{pmatrix}
b_{1} \\
b_{2}
\end{pmatrix},\qquad &&
\bar{K} = TK =
\begin{pmatrix}
K_{1} \\
K_{2}
\end{pmatrix},\qquad &&
\bar{\Pric} = T\Pric T^* =
\begin{pmatrix}
\Pric_{11} & \Pric_{12}\\
\Pric^*_{12} & \Pric_{22}
\end{pmatrix}.%
\end{array}
\end{equation*}
Then, according to \eqref{Hyp:T}, we have the following observed control system on $\bar{x}$, and the corresponding observer:

\begin{equation}\label{E:observation_system_normal}
\left\{
\begin{aligned}
&\dot{\bar{x}}= \bar{A} \bar{x}+ \bar{b}_0
\\
&y= \bar{C} \bar{x}\\
&\dot{\bar{\xhat}}=\bar{A} \bar{\xhat} + \bar{b}_0 - \bar{\LLL}(\Pric, \bar{C})\bar{C} \bar{\eps}
\\
&\dot{\bar{\eps}}=\left(\bar{A}- \bar{\LLL}(\Pric, \bar{C})\bar{C} \right) \bar{\eps}
\\
&\dot{\bar{\Pric}}= \bar{f}(\bar{\Pric}, \bar{A}, \bar{C}).
\end{aligned}
\right.
\end{equation}
Then, according to hypothesis~\ref{Hyp:11}, we can write
\begin{equation}
\left\{
\begin{aligned}
&\dot{\Pric}_{11}= f_{11}(\Pric_{11}, A_{11})
\\
&\dot{\xhat}_1= A_{11}\xhat_1 + b_1 - \LLL_1(\Pric_{11}, C_1)C_1 \eps_1
\\
&\dot{\eps}_1=\left(A_{11} - \LLL_1(\Pric_{11}, C_1)C_1\right) \eps_1.
\end{aligned}
\right.
\end{equation}
Since the pair $(C_1, A_{11})$ is observable,
\ref{Hyp:11} and \ref{Hyp:conv} yield
$\LLL_1(\Pric_{11}(t), C_1)C_1\eps_1(t) \to 0$ as $t\to +\infty$.
The equality $K_1 = \LLL_1(\Pric_{11}(t), C_1)C_1\eps_1(t)$
thus yields $K_1 = 0$.
Then, by hypotheses~\ref{Hyp:11} and \ref{Hyp:ker}, $\bar{C}\eps\equiv C_1\eps_1\equiv0$. Hence $K=0$.
Finally, we have $K = T^{-1}\bar{K} = 0$.
\end{proof}

\begin{lemma}
Let $(\delta, \xhat_0, \eps_0, \Pric_0) \in \A\times \K$. Let $(\xhat, \eps, \Pric)$ be the solution of \eqref{E:kalman_coupled} given by $(\delta, \xhat_0, \eps_0, \Pric_0)$.
Set $\cont_0 = (\feed+\delta)(\xhat_0)$.
Let $(\xhatan, \epsan, \Pan)$ be the solution of \eqref{E:Observer_system} given by the initial condition $(\xhat_0, \eps_0, \Pric_0)$ and the constant input $\cont\equiv\cont_0$.
If $\xhat^{(i)}(0) = 0$ for all $i\in\N\setminus\{0\}$, then $\xhatan$ is constant and
\begin{equation}
(\epsan^{(k)}(0), \Pan^{(k)}(0)) = (\eps^{(k)}(0), \Pric^{(k)}(0))
\end{equation}
for all $k\in\N$.
\label{lem4}
\end{lemma}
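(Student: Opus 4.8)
The plan is to compare term-by-term the Taylor expansions at $t=0$ of the two solutions. Recall that $(\xhat, \eps, \Pric)$ solves \eqref{E:kalman_coupled} with feedback $\lambda + \delta$, whereas $(\xhatan, \epsan, \Pan)$ solves \eqref{E:Observer_system} with the \emph{constant} input $\cont \equiv \cont_0 = (\lambda+\delta)(\xhat_0)$. The key observation is that the input driving the first system is $u(t) = (\lambda+\delta)(\xhat(t))$, and its derivatives at $0$ are given by the Fa\`a di Bruno formula as polynomial combinations of the partial derivatives of $\lambda+\delta$ at $\xhat_0$ and of the derivatives $\xhat^{(1)}(0), \dots, \xhat^{(i)}(0)$. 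Under the hypothesis $\xhat^{(i)}(0) = 0$ for all $i \ge 1$, every such term with $i \ge 1$ vanishes, so $u^{(i)}(0) = 0$ for all $i \ge 1$; that is, the input of the first system agrees with the constant input $\cont_0$ \emph{to infinite order} at $t=0$.

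First I would make this precise: since $\xhat$ is smooth and all its derivatives of positive order vanish at $0$, we have $u^{(i)}(0) = 0$ for every $i \ge 1$ by Fa\`a di Bruno. Next, I would argue that $\xhatan$ is constant. Indeed, plugging the constant input $\cont_0$ into the first equation of \eqref{E:Observer_system} gives $\dot{\xhatan} = \Au{\cont_0}\xhatan + b\cont_0 - \LLL(\Pan)C\epsan$; to see this stays at $\xhat_0$, note that the full first system \eqref{E:kalman_coupled} has $\dot{\xhat}(0) = \xhat^{(1)}(0) = 0$, which forces $\Au{\cont_0}\xhat_0 + b\cont_0 - \LLL(\Pric_0)C\eps_0 = 0$; since the right-hand sides of the $\epsan$ and $\Pan$ equations depend only on the current state and the input value $\cont_0$ — which coincide with those of the $\eps,\Pric$ equations at $t=0$ — and since the $\xhatan$-equation's right-hand side vanishes at $(\xhat_0, \eps_0, \Pric_0)$, the constant curve $\xhatan \equiv \xhat_0$ together with $(\epsan, \Pan)$ solving their (now $\xhat$-independent) subsystem is the unique solution. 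Here I must invoke hypothesis \ref{FC} (forward completeness and uniqueness) to legitimately identify this as \emph{the} solution.

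For the equality of jets, I would proceed by induction on $k$, showing simultaneously that $\xhat^{(k)}(0) = \xhatan^{(k)}(0)$ (both zero for $k \ge 1$), $\eps^{(k)}(0) = \epsan^{(k)}(0)$, and $\Pric^{(k)}(0) = \Pan^{(k)}(0)$. The base case $k=0$ is the shared initial condition. For the inductive step, differentiate the defining ODEs $k$ times and evaluate at $0$: the $(k{+}1)$-st derivative of each state variable is expressed, via the Leibniz rule, as a universal polynomial in the lower-order derivatives of the states and in $u^{(0)}, \dots, u^{(k)}$ for the first system, respectively $\cont_0^{(0)} = \cont_0$, $0, \dots, 0$ for the second. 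Since $u^{(j)}(0) = \cont_0^{(j)}(0)$ for all $j \ge 0$ (equal to $\cont_0$ for $j=0$ and to $0$ for $j \ge 1$) and the lower-order state derivatives agree by the induction hypothesis, the $(k{+}1)$-st derivatives agree as well. The main obstacle — and the only place real care is needed — is bookkeeping the Fa\`a di Bruno / Leibniz expansions to confirm that $\xhat^{(i)}(0)=0$ genuinely kills every mixed term, and that the two systems' vector fields, once the input jets coincide, produce identical recursions; everything else is a routine induction that never leaves the formal Taylor level and so requires no completeness beyond what \ref{FC} already grants for existence of the curves being expanded.
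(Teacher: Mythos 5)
Your formal computation --- $u^{(i)}(0)=0$ for all $i\ge 1$ by Fa\`a di Bruno, followed by a term-by-term induction showing that the two solutions share the same Taylor coefficients at $t=0$ --- is exactly the paper's argument for the jet equality, and it is correct. The gap is in how you conclude that $\xhatan$ is constant. Your direct argument does not work: the identity $\Au{\cont_0}\xhat_0+b\cont_0-\LLL(\Pric_0)C\eps_0=0$ only gives $\dot\xhatan(0)=0$. For the constant curve $\xhatan\equiv\xhat_0$ to be a solution of $\dot\xhatan=\Au{\cont_0}\xhatan+b\cont_0-\LLL(\Pan)C\epsan$ you would need $\LLL(\Pan(t))C\epsan(t)$ to stay equal to $\Au{\cont_0}\xhat_0+b\cont_0$ for all $t>0$, and nothing forces that: the correction term evolves with the autonomous $(\epsan,\Pan)$ subsystem. (That this correction term is constant, and in fact zero, when $\xhatan$ is constant is precisely the nontrivial content of Lemma~\ref{lem2}, which is applied \emph{after} the present lemma; you cannot assume it here.) Uniqueness of solutions cannot be invoked for a curve that is not known to be a solution.

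Your induction does yield $\xhatan^{(k)}(0)=0$ for all $k\ge1$, but you explicitly state that the argument ``never leaves the formal Taylor level,'' and that is exactly where it falls short of the conclusion: in the smooth category a function all of whose derivatives vanish at one point need not be constant. The missing ingredient, which the paper supplies, is that the constant-input system \eqref{E:Observer_system} has an analytic right-hand side (for the Luenberger and Kalman observers $f$ is polynomial in $\Pric$), so by the ODE version of the Cauchy--Kovalevskaya theorem $(\xhatan,\epsan,\Pan)$ is analytic; only then does the vanishing of all positive-order derivatives of $\xhatan$ at $0$ upgrade to $\xhatan$ being constant. Adding this analyticity step (and noting it is needed on the whole interval of existence, not just near $0$, since Lemma~\ref{lem2} uses the behaviour as $t\to+\infty$) completes the proof.
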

\begin{proof}
Assume that $\xhat^{(i)}(0) = 0$ for all $i\in\N\setminus\{0\}$.
Then, for all $i\in\N\setminus\{0\}$,
\begin{align}
\Au{(\feed+\delta)(\xhat)}^{(i)}(0)
= 0.
\label{eqA}
\end{align}
According to the ODE version of the Cauchy-Kovalevskaya theorem, $(\xhatan, \epsan, \Pan)$ is analytic in a neighborhood of $0$.
Hence,
it is sufficient to show that
\begin{equation}
(\xhatan^{(k)}(0), \epsan^{(k)}(0), \Pan^{(k)}(0)) = (\xhat^{(k)}(0), \eps^{(k)}(0), \Pric^{(k)}(0))
\label{eqan}
\end{equation}
for all $k\in\N$.
By definition of $(\xhat, \eps, \Pric)$ and $(\xhatan, \epsan, \Pan)$, we have 
\begin{align*}
(\xhatan(0), \epsan(0), \Pan(0)) = (\xhat_0, \eps_0, \Pric_0) = (\xhat(0), \eps(0), \Pric(0)).
\end{align*}
Let $k\in\N$. Assume that for all $i\in\intset{0}{k}$, \eqref{eqan} is satisfied. Then we prove that \eqref{eqan} is also satisfied for $i=k+1$. Using Faà di Bruno's formula and \eqref{eqA}, we get
\begin{align}
\Pric^{(k+1)}(0)
&= f\left( \Pric, \Au{(\feed+\delta)(\xhat)}, C \right)^{(k)}(0)\nonumber\\
&= f\left( \Pric, \Au{(\feed+\delta)(\xhat(0))}, C \right)^{(k)}(0)\tag{by \eqref{eqA}}\\
&= f\left( \Pan, \Au{(\feed+\delta)(\xhat(0))}, C \right)^{(k)}(0)\tag{by induction hypothesis}\\
&= \Pan^{(k+1)}(0).\nonumber
\end{align}
Likewise, we obtain
\(\eps^{(k+1)}(0) = \epsan^{(k+1)}(0)\) and \(\xhat^{(k+1)}(0) = \xhatan^{(k+1)}(0)\).
\end{proof}

\begin{lemma}\label{l:appli}
Assume that the pair $(C, A)$ is observable.
Let $(\xhat_0, \eps_0, \Pric_0) \in \K$.
Let $R>0$, $\eta>0$ as in Lemma \ref{lem1} and $\delta\in \VV_R$ satisfying $\sup\{\abs{\delta(x)} \mid x\in \Kx\} < \eta$. Let $(\xhat, \eps, \Pric)$ be the solution of \eqref{E:kalman_coupled} given by $(\delta, \xhat_0, \eps_0, \Pric_0)$.
If for all $i\in\N\setminus\{0\}$, $\xhat^{(i)}(0) = 0$,
then $\xhat \equiv \eps \equiv 0$.
\end{lemma}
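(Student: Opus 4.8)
The plan is to compare the (merely $C^\infty$) solution of \eqref{E:kalman_coupled} with the constant-input observer of \eqref{E:Observer_system} studied in Lemmas~\ref{lem2} and \ref{lem4}, whose dynamics are \emph{analytic}, and then to invoke observability of $(C,A)$. First I would set $\cont_0=(\feed+\delta)(\xhat_0)$ and let $(\xhatan,\epsan,\Pan)$ be the solution of \eqref{E:Observer_system} with initial condition $(\xhat_0,\eps_0,\Pric_0)$ and constant input $\cont\equiv\cont_0$. Since $\xhat^{(i)}(0)=0$ for all $i\geq1$, Lemma~\ref{lem4} applies and gives that $\xhatan$ is constant, equal to $\xhat_0$, and that $\epsan,\Pan$ have the same jet at $0$ as $\eps,\Pric$. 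Because $\xhatan$ is constant and $(C,A)$ is observable, Lemma~\ref{lem2} yields $\LLL(\Pan(t))C\epsan(t)=0$ for all $t\geq0$, and in particular $\LLL(\Pric_0)C\eps_0=0$.

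Next I would show $\xhat_0=0$. Evaluating the $\xhat$-equation of \eqref{E:kalman_coupled} at $t=0$ gives $0=\xhat^{(1)}(0)=F_{\feed+\delta}(\xhat_0)-\LLL(\Pric_0)C\eps_0=F_{\feed+\delta}(\xhat_0)$, so $\xhat_0$ is an equilibrium point of $F_{\feed+\delta}$ lying in $\Kx$; Lemma~\ref{lem1}, which applies since $\delta\in\VV_R$ with $\sup\{|\delta(x)|\mid x\in\Kx\}<\eta$, then forces $\xhat_0=0$. As $0\in B(0,R)$ and $\delta\in\VV_R$ we get $\delta(\xhat_0)=0$, and with $\feed(0)=0$ this gives $\cont_0=0$: the constant-input observer above is driven by the input $0$ and the matrix $A$, and $\xhatan\equiv0$.

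I would then identify the two trajectories and finish. Along the constant function $\xhat\equiv0$ one has $(\feed+\delta)(\xhat)\equiv0$, so the $(\xhat,\eps,\Pric)$-part of \eqref{E:kalman_coupled} reduces exactly to \eqref{E:Observer_system} with input $0$; as $(\xhatan,\epsan,\Pan)$ solves the latter and has the same initial condition $(0,\eps_0,\Pric_0)$, uniqueness of solutions (hypothesis~\ref{FC}) gives $(\xhat,\eps,\Pric)=(\xhatan,\epsan,\Pan)$, in particular $\xhat\equiv0$. Finally, $\LLL(\Pric(t))C\eps(t)=0$ turns the $\eps$-equation into $\dot\eps=A\eps$, so $\eps(t)=\e^{At}\eps_0$; and $\LLL(\Pric(t))C\eps(t)=0$ together with hypothesis~\ref{Hyp:ker} applied with $k=n$ (so that the Kalman decomposition of $(C,A)$ is trivial and the hypothesis reads $\ker\LLL(\Pric(t))\cap\Image C=\{0\}$) forces $C\eps(t)=0$ for all $t$, i.e. $C\e^{At}\eps_0\equiv0$. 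Differentiating repeatedly at $t=0$ gives $\eps_0\in\ker\OO(C,A)$, which is $\{0\}$ by observability of $(C,A)$; hence $\eps_0=0$ and $\eps\equiv0$.

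The step I expect to be the most delicate is conceptual rather than computational: $\xhat,\eps,\Pric$ are only $C^\infty$ — because $\feed$ and $\delta$ are — so the vanishing of all positive-order derivatives of $\xhat$ at $0$ does \emph{not} by itself yield $\xhat\equiv\xhat_0$, which is precisely why the argument is routed through the analytic constant-input observer of Lemma~\ref{lem4}. A related point requiring care is pinning the constant input down to $0$ rather than to an arbitrary value $\cont_0$, since it is only then that the two trajectories can be identified and observability of $(C,A)$ used directly.
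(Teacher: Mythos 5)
Your proof is correct, and its skeleton coincides with the paper's: route the $C^\infty$ trajectory through the analytic constant-input observer via Lemma~\ref{lem4}, kill the correction term with Lemma~\ref{lem2}, and deduce $F_{\feed+\delta}(\xhat_0)=0$ hence $\xhat_0=0$ by Lemma~\ref{lem1}, whence $\cont_0=0$. Where you diverge is in the derivation of $\eps_0=0$. The paper stays at the level of jets at $t=0$: it proves $CA^k\eps_0=0$ by induction on $k$, expanding $(\LLL(\Pric,C)C\eps)^{(k)}(0)=0$ with the Leibniz rule and invoking hypothesis~\ref{Hyp:ker} at each step, and only then concludes $\xhat\equiv\eps\equiv0$ by uniqueness. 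You instead first upgrade the jet equality of Lemma~\ref{lem4} to a genuine trajectory identity: once $\xhat_0=0$ and $\cont_0=0$, the constant-input solution $(\xhatan,\epsan,\Pan)$ with $\xhatan\equiv0$ is itself a solution of the $(\xhat,\eps,\Pric)$-subsystem of \eqref{E:kalman_coupled} (the correction term vanishes identically by Lemma~\ref{lem2} and $(\feed+\delta)(0)=0$), so uniqueness from~\ref{FC} gives $(\xhat,\eps,\Pric)=(\xhatan,\epsan,\Pan)$ globally; then $\LLL(\Pric(t),C)C\eps(t)\equiv0$ plus~\ref{Hyp:ker} applied pointwise in time yields $C\eps(t)\equiv0$, the $\eps$-equation collapses to $\dot\eps=A\eps$, and $C\e^{At}\eps_0\equiv0$ forces $\eps_0\in\ker\OO(C,A)=\{0\}$. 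Both endings use exactly the same ingredients (Lemmas~\ref{lem2} and~\ref{lem4}, hypothesis~\ref{Hyp:ker}, observability of $(C,A)$); yours trades the Leibniz-rule induction for a slightly cleaner flow-level argument, at the cost of having to justify the trajectory identification, which you do correctly. You also rightly flag the one genuinely delicate point, namely that the vanishing of the jet of $\xhat$ does not by itself give $\xhat\equiv\xhat_0$ in the $C^\infty$ category, which is precisely why the detour through the analytic system is needed.
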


\begin{proof}
Assume that for all $i\in\N\setminus\{0\}$, $\xhat^{(i)}(0) = 0$.
Set $\cont_0 = (\feed+\delta)(\xhat_0)$.
Let $(\xhatan, \epsan, \Pan)$ be the solution of \eqref{E:kalman_coupled} given by the initial condition $(\xhat_0, \eps_0, \Pric_0)$ and the constant input $\cont\equiv\cont_0$.
According to Lemma \ref{lem4}, $\xhatan \equiv \xhat_0$ and for all $k\in \N$, $(\epsan^{(k)}(0), \Pan^{(k)}(0)) = (\eps^{(k)}(0), \Pric^{(k)}(0))$.
Then,
by Lemma \ref{lem2}, we get that $\LLL(\Pan, C)C\epsan \equiv 0$.
Hence, $\Au{\cont_0}\xhatan + b \cont_0 \equiv 0$ \ie $\Au{(\feed+\delta)(\xhat_0)}\xhatan(t) + b (\feed+\delta)(\xhat_0) = 0$ for all $t\in\R_+$.
In particular, at $t=0$ we have that $F_{\feed+\delta}(\xhat_0) = 0$.
Hence, from Lemma \ref{lem1}, $\xhat_0 = 0$. By uniqueness of the solution of \eqref{E:kalman_coupled} for a given initial condition, it remains to prove that $\eps_0 = 0$ in order to get that $\xhat \equiv \eps \equiv 0$. Since the pair $(C, A)$ is observable, it is sufficient to prove that $CA^k\eps_0 = 0$ for all $k\in\N$. We proceed by induction.
From Lemma \ref{lem2}, $\LLL(\Pan(0), C)C\epsan(0) = 0$. Then, according to hypothesis~\ref{Hyp:ker}, $C\eps_0 = C\epsan(0) = 0$.
Let $k\in \N$.
Assume that $CA^i\eps_0 = 0$ for all $i\in\intset{0}{k-1}$. We prove in the following that $CA^k\eps_0 = 0$.
From Lemma \ref{lem2}, $(\LLL(\Pan, C)C\epsan)^{(i)}(0) = 0$ for all $i\in\N$. Hence, by Lemma \ref{lem4}, we get for all $i\in\N$,
$(\LLL(\Pric, C)C\eps)^{(i)}(0) = (\LLL(\Pan, C)C\epsan)^{(i)}(0) = 0$ and then
$C\eps^{(i)}(0) = C\Au{\cont_0}^i\eps_0 = CA^i\eps_0$ since $\cont_0 = (\feed+\delta)(\xhat_0) =(\feed+\delta)(0) = 0 $.
Then,
\begin{align}
0 &= (\LLL(\Pan, C)C\epsan)^{(k)}(0)
\tag{by Lemma \ref{lem2}}\\
&= (\LLL(\Pric, C)C\eps)^{(k)}(0)
\tag{by Lemma \ref{lem4}}\\
&= \sum_{i=0}^k \binom{k}{i} \LLL(\Pric, C)^{(k-i)}(0) C \eps^{(i)}(0)
\tag{by Leibniz rule}\\
&= \sum_{i=0}^k \binom{k}{i} \LLL(\Pric, C)^{(k-i)}(0) C A^i\eps_0
\nonumber\\
&=\LLL(\Pric_0, C)C A^k\eps_0.
\tag{by induction hypothesis}
\end{align}
Thus, by hypothesis~\ref{Hyp:ker}, $C A^k\eps_0 = 0$, which concludes the induction and the proof.
\end{proof}
This concludes the series of lemmas necessary to prove Proposition~\ref{prop:appli} and Theorem~\ref{thm:appli}.
\begin{proof}[Proof of Proposition~\ref{prop:appli}]
The statement follows directly from the contrapositive of Lemma~\ref{l:appli}.
\end{proof}
\begin{proof}[Proof of Theorem~\ref{thm:appli}]
Recall that, according to Proposition~\ref{prop:fc}, the Luenberger observer and the Kalman observer satisfy \ref{FC}. It remains to show that the sufficient conditions stated in the Proposition~\ref{prop:appli} are satisfied by these observers to conclude the proof of Theorem~\ref{thm:appli}.

Let $Q\in\sym_n$.
For all $(\bar{A}, \bar{C})\in\End(\R^n)\times\LL(\R^n,\R^m)$ and all $\Pric\in \sym_n$, let
\begin{align}
&f^\mathrm{Luenberger}(\Pric, \bar{A}, \bar{C}) = 0\tag{Luenberger observer}\\
&f^\mathrm{Kalman}_{Q}(\Pric, \bar{A}, \bar{C}) = \Pric \bar{A}^* + \bar{A} \Pric + Q - \Pric\bar{C}^*\bar{C}\Pric \tag{Kalman observer}
\end{align}
and $\LLL(\xi, C) = \Pric \bar{C}^*$.
Let $f\in\{f^\mathrm{Luenberger}, f^\mathrm{Kalman}_{Q}\}$.
According to Proposition \ref{prop:fc}, the time-varying vector field $f$ is forward complete.
For all $T\in\GL(\R^n)$, for all $(\bar{A}, \bar{C})\in\End(\R^n)\times\LL(\R^n,\R^m)$ and for all $\Pric\in \sym_n$, let $(\bar{f}, \bar{\LLL})$ be defined by
\begin{align}
\begin{cases}
    \bar{f}(T\Pric T^*, T\bar{A}T^{-1}, \bar{C}T^{-1}) = Tf(\Pric, \bar{A}, \bar{C})T^*\\
    \bar{\LLL}(T\Pric T^*, \bar{C}T^{-1}) = T\LLL(\Pric, \bar{C}).
\end{cases}
\end{align}
Then
\begin{align*}
\bar{\LLL}(T\Pric T^*, \bar{C}T^{-1})
= T\LLL(\Pric, \bar{C})
= T \Pric \bar{C}^*
= T \Pric T^* (\bar{C}  T^{-1})^*
= \LLL(T\Pric T^*, \bar{C}T^{-1}).
\end{align*}
Hence $\bar{\LLL} = \LLL$. Moreover,
if $f = f^\mathrm{Luenberger}$, then $\bar{f} = f = 0$.
Otherwise, if $f = f^\mathrm{Kalman}_{Q}$ and then
    \begin{align*}
    \bar{f}(T\Pric T^*, T\bar{A}T^{-1}, \bar{C}T^{-1})
    &= Tf(\Pric, \bar{A}, \bar{C})T^*\\
    &= T\Pric \bar{A}^* + \bar{A} \Pric + Q - \Pric\bar{C}^*\bar{C}\Pric T^*\\
    &= T\Pric T^* (T\bar{A}T^{-1})^* + (T\bar{A}T^{-1}) T\Pric T^* \\
    & \quad + TQT^* - T\Pric T^* (\bar{C}T^{-1})^*\bar{C}T^{-1} T\Pric T^*\\
    &= f^\mathrm{Kalman}_{TQT^*}(T\Pric T^*, T\bar{A}T^{-1}, \bar{C}T^{-1}),
    \end{align*}
Hence it is sufficient to prove that, for all $(\bar{A}, \bar{C}) \in \End(\R^n) \times \LL(\R^n,\R^m)$ satisfying \eqref{eq:struct}, $(f, \LLL)$ satisfies hypotheses~\ref{Hyp:11}, \ref{Hyp:conv} and \ref{Hyp:ker}.
Hypothesis~\ref{Hyp:11} requires some computations to check that if $(\bar{A}, \bar{C})$ is of the form \eqref{eq:struct}, then \eqref{E:kalman_coupled_obs} is satisfied with

\begin{align}
f_{11}(\Pric_{11}, \bar{A}_{11}, \bar{C}_{1}) =
\begin{cases}
0 & \text{ if } f = f^\mathrm{Luenberger}\\
\Pric_{11} \bar{A}_{11}^* + \bar{A}_{11} \Pric_{11} + Q_{11} - \Pric_{11}\bar{C}_{1}^*\bar{C}_{1}\Pric_{11} & \text{ if } f = f^\mathrm{Kalman}_{Q}
\end{cases}
\end{align}
and $\LLL_1(\Pric_{11}, \bar{C}_{1}) = \Pric_{11}\bar{C}_{1}^*$.
Hence, for any $f\in\{f^\mathrm{Luenberger}, f^\mathrm{Kalman}_{Q}\}$, $f_{11}$ is an observer of the same form than $f$ acting on $\R^k$.
Hypothesis~\ref{Hyp:conv} follows from the fact that these well-known observers guaranty that the correction term $\LLL_1(\Pric_{11}, \bar{C}_1)\bar{C}_1\eps_{1}$ goes to 0 as soon as the pair $(\bar{C}_1, \bar{A}_{11})$ is observable
(see \eg \cite[Chapter 1, Theorems 3 and 4]{Besancon}).
Hypothesis~\ref{Hyp:ker} is clear: for all $\Pric_{11}\in\sym_k$ and all $\bar{C}_1\in\LL(\R^k,\R^m)$, if $\eps_1\in\R^k$ is such that $\Pric_{11} \bar{C}_1^* \bar{C}_1 \eps_1 = 0$, then $\bar{C}_1\eps_1 = 0$ since $\Pric_{11}$ is invertible.
Thus the conclusion of Proposition~\ref{prop:appli} holds.
\end{proof}

\section*{Acknowledgments}
The authors would like to thank Vincent Andrieu and Daniele Astolfi for many fruitful discussions.
\startmodif
They would also like to thank the anonymous reviewer, for suggesting the addition of Corollary~\ref{cor:feedback} to the paper.
\stopmodif
\bibliographystyle{abbrv}
\bibliography{referencesv2}
\end{document}